\newtheorem{thm}{Theorem}[section]
\newtheorem{cor}[thm]{Corollary}
\newtheorem{lem}[thm]{Lemma}
\theoremstyle{definition}
\newtheorem*{remark}{Remark}
\newtheoremstyle{TheoremNum}
        {\topsep}{\topsep}              
        {\itshape}                      
        {}                              
        {\bfseries}                     
        {.}                             
        { }                             
        {\thmname{#1}\thmnote{ \bfseries #3}}
    \theoremstyle{TheoremNum}
    \newtheorem{thmn}{Theorem}
\numberwithin{equation}{section}
\begin{document}
\title{Supercharacters of unipotent groups defined by involutions}
\author{Scott Andrews}
\address{Department of Mathematics \\ University of Colorado Boulder \\ Boulder, CO 80309}
\email{scott.andrews@colorado.edu}
\keywords{supercharacter, unipotent group}
\subjclass[2010]{20C33,05E10}

\begin{abstract} We construct supercharacter theories of finite unipotent groups in the orthogonal, symplectic and unitary types. Our method utilizes group actions in a manner analogous to that of Diaconis and Isaacs in their construction of supercharacters of algebra groups. The resulting supercharacter theories agree with those of Andr\'e and Neto in the case of the unipotent orthogonal and symplectic matrices and generalize to a large collection of subgroups. In the unitary group case, we describe the supercharacters and superclasses in terms of labeled set partitions and calculate the supercharacter table.

\end{abstract}

\maketitle

\section{Introduction}
For $q$ a power of a prime, let $UT_n(\mathbb{F}_q)$ denote the group of unipotent $n \times n$ upper triangular matrices over the finite field with $q$ elements. Classifying the irreducible representations of $UT_n(\mathbb{F}_q)$ is known to be a ``wild'' problem (see \cite{MR1056208}). In \cite{MR1896026}, Andr\'e constructs a set of characters, referred to as ``basic characters,'' such that each irreducible character of $UT_n(\mathbb{F}_q)$ occurs with nonzero multiplicity in exactly one basic character. These characters can be thought of as a coarser approximation of the irreducible characters of $UT_n(\mathbb{F}_q)$. Diaconis--Isaacs generalize the idea of a basic character to a ``supercharacter'' of an arbitrary finite group in \cite{MR2373317}. They also construct supercharacter theories for all finite algebra groups $G$, which are subgroups of $UT_n(\mathbb{F}_q)$ such that $\{g-1 \mid g \in G\}$ is an $\mathbb{F}_q$-algebra. In the case that $G = UT_n(\mathbb{F}_q)$, the constructions of Andr\'e and of Diaconis--Isaacs produce the same supercharacter theory. The two constructions use different techniques; Andr\'e constructs basic characters by inducing linear characters from certain subgroups of $UT_n(\mathbb{F}_q)$, whereas Diaconis--Isaacs utilize the two-sided action of $UT_n(\mathbb{F}_q)$ on the associative algebra of strictly upper triangular matrices.

\bigbreak

Andr\'e--Neto have modified Andr\'e's earlier construction to the unitriangular groups in types $B,C$ and $D$ in \cite{MR2264135,MR2537684,MR2457229}. In this paper, we generalize these supercharacter theories in a manner analogous to the type $A$ construction of Diaconis--Isaacs. The construction in \cite{MR2264135,MR2537684,MR2457229} uses the idea of a ``basic subset of roots'' to induce linear characters from certain subgroups of the full unitriangular group.  Our construction instead utilizes actions of $UT_n(\mathbb{F}_q)$ on the Lie algebras of the unitriangular groups in types $B,C$ and $D$ to define superclasses and supercharacters. One advantage of our method is that it works in situations where the idea of a basic subset of roots does not make sense, such as the case of the unipotent radical of a parabolic subgroup.

\bigbreak

Aguiar et al. construct a Hopf algebra on the type $A$ supercharacters in \cite{MR2880223} and show that this structure is isomorphic to the Hopf algebra of symmetric functions in non-commuting variables. In \cite{MR3119357}, Benedetti has constructed an analogous Hopf algebra on the superclass functions of type $D$. Marberg describes the type $B$ and $D$ supercharacters in terms of type $A$ supercharacters in \cite{MR2880659}. We hope that our construction will allow for many more type $A$ results to be generalized to other types.

\bigbreak

Given a pattern subgroup $G$ of $UT_n(\mathbb{F}_q)$ (an algebra group such that $\{g-1\mid g\in G\}$ has a basis of elementary matrices) and a subgroup $U$ of $G$ defined by an anti-involution of $G$, we construct a supercharacter theory. The anti-involution of $G$ induces an action of $G$ on the Lie algebra of $U$, which we use to construct the superclasses and supercharacters. The examples that naturally fall into this context include the unipotent orthogonal, symplectic, and unitary groups. Let $J$ denote the $n \times n$ matrix with ones on the anti-diagonal and zeroes elsewhere; for $q$ a power of an odd prime, define
\begin{align*}
        UO_n(\mathbb{F}_q) & = \{g \in UT_n(\mathbb{F}_q) \mid g^{-1} = Jg^tJ\} \text{ \quad and}\\
        USp_{2n}(\mathbb{F}_q) & = \bigg\{g \in UT_{2n}(\mathbb{F}_q) \;\bigg|\; g^{-1} = -\left( \begin{array}{cc} 0&J\\-J&0\end{array}\right) g^t \left( \begin{array}{cc} 0&J\\-J&0\end{array}\right)\bigg\}. \\
\end{align*}
The groups $UO_n(\mathbb{F}_q)$ are the unipotent groups of types $B$ and $D$, and the groups $USp_{2n}(\mathbb{F}_q)$ are the unipotent groups of type $C$. Note that these groups are each defined by an anti-involution of $UT_n(\mathbb{F}_q)$; our construction produces the supercharacter theories constructed by Andr\'e--Neto in \cite{MR2264135,MR2537684,MR2457229}.

\bigbreak

We can also construct supercharacter theories of the unipotent unitary groups. For $g \in UT_n(\mathbb{F}_{q^2})$, define $\overline{g}$ by $(\overline{g})_{ij} = (g_{ij})^q$. Let
\[
        UU_n(\mathbb{F}_{q^2}) = \{g \in UT_n(\mathbb{F}_{q^2}) \mid g^{-1} = J\overline{g}^tJ \}.
\]
The group $UU_n(\mathbb{F}_{q^2})$ is the group of unipotent unitary $n \times n$ matrices over $\mathbb{F}_{q^2}$. As $UU_n(\mathbb{F}_{q^2})$ is a subgroup of $UT_n(\mathbb{F}_{q^2})$ that is defined by an anti-involution, we get a supercharacter theory from the action of $UT_n(\mathbb{F}_{q^2})$ on the Lie algebra of $UU_n(\mathbb{F}_{q^2})$. The supercharacter values on superclasses demonstrate \emph{Ennola duality}, as they are obtained from the supercharacter values of $UT_n(\mathbb{F}_q)$ by formally replacing `$q$' with `$-q$'.

\bigbreak

We present the main result of the paper in Section~\ref{mainresult}, which is applied in Section~\ref{examples} to construct supercharacter theories of unipotent orthogonal and symplectic groups. We develop necessary background material on the interactions between groups, algebras and vector spaces in Section~\ref{background}. We review the construction of supercharacter theories of algebra groups in Section~\ref{alggpsct}, and prove our main result in Section~\ref{mainresultproof}. Finally, in Section~\ref{unitary}, we construct supercharacter theories of the unipotent unitary groups and calculate the values of supercharacters on superclasses.

\section{Main result}\label{mainresult}

The main result of this paper is the construction of a supercharacter theory for certain subgroups of algebra groups that are defined by anti-involutions. In this section we review the necessary background material on algebra groups and present the main result of the paper.

\subsection{Supercharacter theories}\label{sctig}

The idea of a supercharacter theory of an arbitrary finite group was introduced by Diaconis--Isaacs in \cite{MR2373317}, and has been connected to a number of areas of mathematics. In \cite{MR2989654}, Hendrickson shows that the supercharacter theories of a finite group $G$ are in bijection with the central Schur rings over $G$. Brumbaugh et al. construct certain exponential sums of interest in number theory (e.g., Gauss, Ramanujan, and Kloosterman sums) as supercharacters of abelian groups in \cite{MR3239156}. For a more in-depth treatment of supercharacters see \cite{MR2373317}; we only address the basics that are necessary for our construction.

\bigbreak

Let $G$ be a finite group, and suppose that $\mathcal{K}$ is a partition of $G$ into unions of conjugacy classes and $\mathcal{X}$ is a set of characters of $G$. We say that the pair $(\mathcal{K},\mathcal{X})$ is a \emph{supercharacter theory} of $G$ if
\begin{enumerate}
\item $|\mathcal{X}|=|\mathcal{K}|$,
\item the characters $\chi\in \mathcal{X}$ are constant on the members of $\mathcal{K}$, and
\item each irreducible character of $G$ is a constituent of exactly one character in $\mathcal{X}$.
\end{enumerate}

The characters $\chi \in \mathcal{X}$ are referred to as \textit{supercharacters} and the sets $K \in \mathcal{K}$ are called $\textit{superclasses}$.

\subsection{Algebra groups and pattern subgroups}

Let $\mathbb{F}$ be a field and let $\frak{g}$ be a nilpotent associative algebra over $\mathbb{F}$. The \textit{algebra group} $G$ associated to $\frak{g}$ is the set of formal sums
\[
        G = \{1+x \mid x \in \frak{g}\}
\]
with multiplication defined by $(1+x)(1+y) = 1+(x+y+xy)$  (see \cite{MR1358482}). As $\frak{g}$ is nilpotent, elements in $G$ have inverses given by
\[
        (1+x)^{-1} = 1 + \sum_{i=1}^\infty (-x)^i.
\]
We will often write $G = 1+\frak{g}$ to indicate that $G$ is the algebra group associated to $\frak{g}$.

\bigbreak

For example, if we define $UT_n(\mathbb{F}_q)$ to be the group of $n \times n$ upper triangular matrices over $\mathbb{F}_q$ with ones on the diagonal and $\frak{ut}_n(\mathbb{F}_q)$ to be the algebra of $n \times n$ upper triangular matrices over $\mathbb{F}_q$ with zeroes on the diagonal, then $UT_n(\mathbb{F}_q)$ is the algebra group associated to $\frak{ut}_n(\mathbb{F}_q)$.

\bigbreak

Let $\mathcal{P}$ be a poset on $[n]$ that is a sub-order of the usual linear order. In other words, $\mathcal{P}$ has the properties that
\begin{enumerate}
\item if $i \preceq_\mathcal{P} j$ then $i \leq j$,
\item if $i \preceq_\mathcal{P} j$ and $j \preceq_\mathcal{P} k$ then $i \preceq_\mathcal{P} k$, and
\item $i \preceq_\mathcal{P} i$ for all $i \in [n]$.
\end{enumerate}
Corresponding to the poset $\mathcal{P}$ are a \textit{pattern subgroup}
\[
        U_\mathcal{P} = \{g \in UT_n(\mathbb{F}_q) \mid g_{ij} = 0 \text{ unless } i \preceq_\mathcal{P} j\}
\]
and a \textit{pattern subalgebra}
\[
        \frak{u}_\mathcal{P}  = \{x \in \frak{ut}_n(\mathbb{F}_q) \mid x_{ij} = 0 \text{ unless } i \prec_\mathcal{P} j\}.
\]
Note that $U_\mathcal{P}$ is the algebra group corresponding to $\frak{u}_\mathcal{P}$. For a more complete discussion of pattern subgroups, see \cite{MR2491890}.

\bigbreak

In \cite{MR2373317}, Diaconis--Isaacs construct a supercharacter theory for an arbitrary finite algebra group $G = 1+\frak{g}$. Note that $G$ acts on $\frak{g}$ by left and right multiplication; there are corresponding actions of $G$ on the dual $\frak{g}^*$ given by
\[
        (g \lambda)(x) = \lambda(g^{-1}x) \quad \text{ and } (\lambda g)(x) = \lambda(xg^{-1}),
\]
where $g \in G$, $\lambda \in \frak{g}^*$, and $x \in \frak{g}$. Let
\begin{align*}
        f:G &\to \frak{g} \\
        g &\mapsto g-1
\end{align*}
and let $\theta:F_q^+ \to \mathbb{C}^\times$ be a nontrivial homomorphism. For $g \in G$ and $\lambda \in \frak{g}^*$, define
\[
        K_g = \{h \in G \mid f(h) \in Gf(g)G\}
\]
and
\[
        \chi_\lambda =\frac{|G\lambda|}{|G\lambda G|} \sum_{\mu \in G\lambda G} \theta \circ \mu \circ f.
\]

\begin{thmn}[\ref{sctalggp} \textnormal{(\cite{MR2373317}})] The partition of $G$ given by $\mathcal{K} = \{K_g \mid g \in G\}$, along with the set of characters $\{\chi_\lambda \mid \lambda \in \frak{g}^*\}$, form a supercharacter theory of $G$. This supercharacter theory is independent of the choice of $\theta$.

\end{thmn}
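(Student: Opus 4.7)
The plan is to verify the three axioms of a supercharacter theory in turn, exploiting the fact that superclasses and supercharacters are naturally parameterized by the two-sided $G$-orbits on $\frak{g}$ and on $\frak{g}^*$ respectively. For condition (2), I would argue directly from the definition: if $h\in K_g$, write $f(h)=af(g)b$ for some $a,b\in G$; then for every $\mu\in G\lambda G$,
\[
\mu(f(h))=\mu(af(g)b)=(a^{-1}\cdot\mu\cdot b^{-1})(f(g)),
\]
and $a^{-1}\cdot\mu\cdot b^{-1}\in G\lambda G$. Since $\mu\mapsto a^{-1}\cdot\mu\cdot b^{-1}$ is a bijection of $G\lambda G$ onto itself, summing over $\mu$ yields $\chi_\lambda(h)=\chi_\lambda(g)$.

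For condition (1), the partition $\mathcal{K}$ is in bijection (via $f$) with the two-sided $G$-orbits on $\frak{g}$, while $\mathcal{X}$ is in bijection with the two-sided $G$-orbits on $\frak{g}^*$. I would invoke the standard fact that a finite group acting on a finite-dimensional $\mathbb{F}_q$-vector space has the same number of orbits as on its dual space; this follows from Brauer's permutation lemma applied to the dual $\mathbb{F}_q[G\times G]$-modules $\frak{g}$ and $\frak{g}^*$.

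Condition (3) is the main obstacle: each irreducible character of $G$ must appear as a constituent of exactly one $\chi_\lambda$. I would first show that each $\chi_\lambda$ is a genuine character by exhibiting it as a positive multiple of an induced character from a suitable subgroup on which $\theta\circ\lambda\circ f$ becomes linear; for nilpotent algebra groups, such a subgroup is obtained from a maximal isotropic subalgebra for the alternating form $B_\lambda(x,y)=\lambda(xy)-\lambda(yx)$ on $\frak{g}$, in the spirit of the Kirillov--Kazhdan orbit method. Next, by Fourier inversion on the finite abelian group $(\frak{g},+)$,
\[
\sum_{\mu\in\frak{g}^*}\theta(\mu(f(g)))=|G|\cdot\delta_{g,1},
\]
so an explicit nonnegative rational combination of the $\chi_\lambda$ equals the regular character of $G$, forcing every irreducible of $G$ to appear as a constituent of some $\chi_\lambda$. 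Finally, the orthogonality of the additive characters $\{\theta\circ\mu\}$ on $\frak{g}$ and the disjointness of distinct orbits $G\lambda G$ give $\langle\chi_\lambda,\chi_{\lambda'}\rangle=0$ when $G\lambda G\neq G\lambda'G$, so each irreducible appears in exactly one $\chi_\lambda$.

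Independence from $\theta$ is immediate once the structure is in place. Any other nontrivial character of $\mathbb{F}_q^+$ has the form $\theta'=\theta\circ m_c$ for some $c\in\mathbb{F}_q^\times$, where $m_c$ is multiplication by $c$, and one checks that the supercharacter indexed by $\lambda$ using $\theta'$ equals the one indexed by $c\lambda$ using $\theta$. Since scalar multiplication by $c$ permutes the two-sided $G$-orbits on $\frak{g}^*$, the set $\{\chi_\lambda\}$ is unchanged, while the superclasses manifestly do not depend on $\theta$.
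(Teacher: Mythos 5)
Your treatment of conditions (1) and (2) matches the paper: Brauer's permutation lemma for the orbit count, and the reindexing argument $\mu\mapsto a^{-1}\cdot\mu\cdot b^{-1}$ (which is the same computation the paper performs after expanding $\sum_{\mu\in G\lambda G}$ into an average over $G\times G$). The Fourier-inversion argument for the regular character and the orthogonality of the $\chi_\lambda$ across distinct orbits are also exactly the Lemma~\ref{orthonormal}/Corollary~\ref{regrep} ingredients the paper invokes for condition (3). The independence-of-$\theta$ argument is correct.

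The gap is in your argument that $\chi_\lambda$ is a character. You propose inducing $\theta\circ\lambda\circ f$ from the algebra group attached to a maximal isotropic subalgebra $\frak{p}$ for the alternating form $B_\lambda(x,y)=\lambda(xy)-\lambda(yx)$. But isotropy for $B_\lambda$ only gives $\lambda([x,y])=0$ for $x,y\in\frak p$, i.e.\ $\lambda(xy)=\lambda(yx)$; linearity of $\theta\circ\lambda\circ f$ on $1+\frak p$ (with $f(1+x)=x$) requires the strictly stronger condition $\lambda(xy)=0$, because $(1+x)(1+y)=1+(x+y+xy)$. A Kirillov--Kazhdan polarization does not satisfy this, so the restriction of $\theta\circ\lambda\circ f$ to $1+\frak p$ need not be a homomorphism. (The polarization approach can be made to work with a logarithm-type bijection and careful bookkeeping in sufficiently large characteristic, but that is a different and considerably more delicate construction than the one here, and the resulting subtle integrality questions are exactly why the orbit method for algebra groups is not a trivially closed problem.) The paper instead restricts to the algebra group $L_\lambda=1+\frak l_\lambda$ for the \emph{left ideal} $\frak l_\lambda=\{x\in\frak g\mid\lambda(\frak g x)=0\}$; the ideal condition gives $\lambda(\frak l_\lambda\frak l_\lambda)=0$ outright, so $\theta\circ\lambda\circ f$ is genuinely multiplicative there, and Lemma~\ref{glambda} identifies $G\lambda$ with $\{\mu:\mu|_{\frak l_\lambda}=\lambda|_{\frak l_\lambda}\}$, which is precisely what makes the induction computation (Lemma~\ref{induced}) produce $\chi_\lambda$ \emph{exactly}, not merely up to a rational scalar. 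That exact equality $\chi_\lambda=\textup{Ind}_{L_\lambda}^G(\textup{Res}_{L_\lambda}^G(\theta\circ\lambda\circ f))$ is what you need; ``a positive multiple of an induced character'' does not by itself establish that $\chi_\lambda$ has nonnegative integer multiplicities.
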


\bigbreak

The supercharacter theory is independent of $\theta$ in that the sets $\mathcal{K}$ and $\{\chi_\lambda \mid \lambda \in \frak{u}^*\}$ do not depend on $\theta$. If a different $\theta$ is chosen, the $\chi_\lambda$ will be permuted. In Section~\ref{alggpsct} we present a modified proof of this result as motivation for the proof of our main result.

\subsection{Subgroups of algebra groups defined by anti-involutions}\label{subgroups}

For $q$ a power of a prime, let $\frak{g}$ be a nilpotent associative algebra of finite dimension over $\mathbb{F}_q$. Define $G = 1+\frak{g}$. We equip $\frak{g}$ with a Lie algebra structure given by $[x,y] = xy-yx$.

\bigbreak

Let
\begin{align*}
        \dagger: \frak{g} &\to \frak{g} \\
        x & \mapsto x^\dagger
\end{align*}
be an involutive associative algebra antiautomorphism, and for $x \in \frak{g}$ define $(1+x)^\dagger = 1 + x^\dagger$. Note that this makes $\dagger$ an involutive antiautomorphism of $G$.

\bigbreak

Define
\[
        U = \{u \in G \mid u^{\dagger} = u^{-1}\}
\]
and
\[
        \frak{u} = \{x \in \frak{g} \mid x^\dagger = -x\}.
\]
Note that $\frak{u}$ is not an associative algebra, although it is closed under the Lie bracket.

\bigbreak

For $g \in G$ and $x \in \frak{g}$, define $g \cdot x = gxg^\dagger$. It is routine to check that this defines a linear action of $G$ on $\frak{g}$. The action restricts to an action of $G$ on $\frak{u}$, and for $x \in \frak{g}$ and $u \in U$, $u \cdot x = uxu^{-1}$. We can also define a left action of $\frak{g}$ on itself by $x*y = xy + yx^\dagger$. This action restricts to an action of $\frak{g}$ on $\frak{u}$, and for $x \in \frak{u}$ and $y \in \frak{g}$, $x * y = [x,y]$.

\bigbreak

The motivating examples of groups defined in this manner are the unipotent orthogonal, symplectic, and unitary groups in odd characteristic. For instance, if $G = UT_n(\mathbb{F}_q)$ and $\frak{g} = \frak{ut}_n(\mathbb{F}_q)$, with $q$ odd, we can define an antiautomorphism
\begin{align*}
        \dagger: \frak{g} &\to \frak{g} \\
        x &\mapsto Jx^tJ
\end{align*}
where $J$ is the matrix with ones on the antidiagonal and zeroes elsewhere. Then
\[
        UO_n(\mathbb{F}_q) = \{u \in UT_n(\mathbb{F}_q) \mid u^\dagger = u^{-1}\}
\]
and
\[
        \frak{uo}_n(\mathbb{F}_q) = \{x \in \frak{ut}_n(\mathbb{F}_q) \mid x^\dagger = -x\}.
\]
The unipotent symplectic and unitary groups can be similarly described in terms of antiautomorphisms of the upper triangular matrices.

\subsection{Springer morphisms}

In order to utilize the Lie algebra structure of $\frak{u}$ to study $U$, we would like a bijection between $U$ and $\frak{u}$ that preserves useful properties. In the case of an algebra group $G$, we can use the map $g \mapsto g-1$ to relate $G$ to $\frak{g}$. In general, however, it is not the case that $U = 1 +\frak{u}$, so we need a variation on this map. Andr\'e--Neto define a bijection from $U$ to $\frak{u}$ in \cite{MR2264135}, however we require a map that is invariant under the adjoint action of $U$.

\bigbreak

Given an algebra group $G = 1+\frak{g}$ and a map $\dagger$ as above, we define a \textit{Springer morphism} $f:G \to \frak{g}$ to be a bijection such that
\begin{enumerate}
\item $f(U) = \frak{u}$ , and
\item there exist $a_i \in \mathbb{F}_q$ such that $f(1+x) = x+\sum_{i=2}^\infty a_ix^i$.
\end{enumerate}
The dependence of these conditions on $\dagger$ is implicit in that $U$ and $\frak{u}$ are defined in terms of $\dagger$. Note that condition (2) gives that $f(H) = \frak{h}$ for any algebra subgroup $H = \frak{h}+1$, and also guarantees that $f$ will be invariant under the adjoint action of $G$. We require that the coefficient of the $x$ term of $f(1+x)$ be 1 for ease of computation; relaxing this condition would not have any effect on the resulting supercharacter theory.

\bigbreak

Springer morphisms are introduced by Springer and Steinberg in \cite{MR0268192} (III, 3.12) and are utilized by Kawanaka in \cite{MR803335}. Our definition of a Springer morphism is slightly modified from the original definition, but the examples given below are Springer morphisms in the original sense. The logarithm map
\[
        f(1+x) = \sum_{i=1}^\infty (-1)^{i+1}\frac{x^i}{i}
\]
is perhaps the most natural choice of a Springer morphism, but is not defined in many characteristics. The map
\[
        h(1+x) = 2x(x+2)^{-1}.
\]
is, however, a Springer morphism in all odd characteristics. We mention that this is a constant multiple of the map $1+x \mapsto x(x+2)^{-1}$, which is often referred to as the \emph{Cayley map} (see, for instance, \cite{MR803335}). The following lemma is easy to verify directly.

\begin{lem} Let $q$ be a power of the prime $p$, and let $f$ and $h$ be the maps defined above. Let $G = 1+\frak{g}$ be any algebra group, and let $\dagger$ be any anti-involution of $\frak{g}$. If $x^p = 0$ for all $x \in \frak{g}$, then $f$ is a Springer morphism. If $p$ is odd, then $h$ is a Springer morphism.

\end{lem}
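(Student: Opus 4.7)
The plan is to verify, for each of $f$ and $h$, the three required properties: well-definedness together with condition (2), bijectivity as a map $G \to \frak{g}$, and the identity $\phi(U) = \frak{u}$. All three reduce to manipulations with polynomial identities that descend from $\mathbb{Q}$ (or $\mathbb{Z}_{(p)}$) to $\mathbb{F}_q$ once the hypotheses rule out bad denominators.

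First I will check well-definedness and condition (2). Since $\frak{g}$ is finite-dimensional and nilpotent, every formal power series in $x \in \frak{g}$ truncates to a polynomial. For $f(1+x) = \sum_{i \geq 1}(-1)^{i+1} x^i / i$, the assumption $x^p = 0$ restricts the sum to $1 \leq i \leq p-1$, so every denominator is a unit in $\mathbb{F}_q$; for $h(1+x) = 2x(x+2)^{-1}$, the inverse $(x+2)^{-1} = \tfrac{1}{2}\sum_{i \geq 0}(-x/2)^i$ makes sense precisely when $2$ is invertible, i.e.\ when $p$ is odd. In both cases the resulting polynomial has the form $x + \sum_{i \geq 2} a_i x^i$ with $a_i \in \mathbb{F}_q$, so condition (2) is immediate.

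Next I will establish bijectivity. For $f$, the inverse is the truncated exponential $y \mapsto 1 + \sum_{i \geq 1} y^i/i!$, well defined because $y^p = 0$ and each $i!$ for $1 \leq i \leq p-1$ is a unit of $\mathbb{F}_q$; the identities $f \circ (\exp - 1) = \mathrm{id}$ and $(\exp-1) \circ f = \mathrm{id}$ hold as polynomial identities over $\mathbb{Z}_{(p)}$ and therefore after reduction mod $p$. For $h$, a direct manipulation shows that $h^{-1}(y) = 1 + 2y(2-y)^{-1}$, which is again well-defined in odd characteristic.

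Finally, I will verify $\phi(U) = \frak{u}$ for $\phi \in \{f,h\}$ via two intermediate facts: (a) $\phi((1+x)^\dagger) = \phi(1+x)^\dagger$, and (b) $\phi((1+x)^{-1}) = -\phi(1+x)$. Fact (a) follows from the antiautomorphism relation $(x^i)^\dagger = (x^\dagger)^i$ and the observation that $\dagger$ fixes the scalar coefficients of $\phi$. For $f$, fact (b) is the classical logarithm identity $\log((1+x)^{-1}) = -\log(1+x)$, valid as a universal polynomial identity mod $p$; for $h$, substituting $(1+x)^{-1} = 1 - x(1+x)^{-1}$ gives $y + 2 = (2+x)(1+x)^{-1}$ directly, whence $h((1+x)^{-1}) = -2x(2+x)^{-1} = -h(1+x)$. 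Combining (a) and (b), the condition $(1+x)^\dagger = (1+x)^{-1}$ transfers via $\phi$ to $\phi(1+x)^\dagger = -\phi(1+x)$, so $\phi$ restricts to a bijection $U \to \frak{u}$. The main obstacle throughout is purely bookkeeping: the hypotheses $x^p = 0$ and $p$ odd are engineered precisely so that no denominator appearing in the relevant truncated series is divisible by $p$, after which each required identity reduces to a classical polynomial identity over $\mathbb{Q}$.
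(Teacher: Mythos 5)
Your proof is correct and complete. The paper itself offers no argument for this lemma, dismissing it as ``easy to verify directly,'' so there is no official approach to compare against; your three-part verification (condition (2) and well-definedness via the hypotheses ruling out bad denominators, bijectivity via explicit inverses, and $\phi(U)=\frak{u}$ via the two identities $\phi(g^\dagger)=\phi(g)^\dagger$ and $\phi(g^{-1})=-\phi(g)$) is exactly the natural route and fills in what the paper leaves implicit. A minor stylistic remark: since $G$ and $\frak{g}$ are finite sets of the same cardinality, exhibiting one-sided inverses already suffices for bijectivity, and the $\mathbb{Z}_{(p)}$-coefficient bookkeeping for $\exp\circ\log=\mathrm{id}$ could be stated a touch more explicitly (namely, that the truncation at degree $<p$ of each composite has $\mathbb{Z}_{(p)}$ coefficients, so the vanishing of the rational identity descends to $\mathbb{F}_q$), but these are presentational points, not gaps.
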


This lemma allows us to assume the existence of a Springer morphism if we are working in odd characteristic, which we will do for the remainder of the paper.

\subsection{Main theorem}\label{mainthm}

Let $q$ be a power of an odd prime, and let $G = 1+\frak{g}$ be a pattern subgroup of $UT_n(\mathbb{F}_{q^k})$ for some $n$ and $k$. For $1 \leq i \leq n$, define $\overline{i} = n+1-i$. We consider $\frak{g}$ as an $\mathbb{F}_q$-algebra; let $\dagger$ be an anti-involution of $\frak{g}$ such that $(\alpha e_{ij})^\dagger \in \mathbb{F}_{q^k}^\times e_{\bar{j}\bar{i}}$ for all $\alpha \in \mathbb{F}_{q^k}^\times$. In other words, $\dagger$ reflects the entries of elements of $\frak{g}$ across the antidiagonal, up to a constant multiple. The antiautomorphisms that define the orthogonal, symplectic and unitary groups all have this property. Let
\[
        U = \{u \in G \mid u^{\dagger} = u^{-1}\}
\]
and
\[
        \frak{u} = \{x \in \frak{g} \mid x^\dagger = -x\}.
\]
Let $f$ be any Springer morphism and let $\theta:\mathbb{F}_q^+ \to \mathbb{C}^\times$ be a nontrivial homomorphism. For $g \in G$, $x \in \frak{u}$ and $\lambda \in \frak{u}^*$, let $g \cdot x = gxg^\dagger$ and $(g \cdot \lambda)(x) = \lambda(g^{-1} \cdot x)$. For $\lambda \in \frak{u}^*$ and $u \in U$, define
\begin{equation}\label{eqku}
        K_u = \{ v \in U \mid f(v) \in G \cdot f(u)\}
\end{equation}
and
\begin{equation}\label{eqchilambda}
        \chi_\lambda = \frac{1}{n_\lambda} \sum_{\mu \in G \cdot \lambda}
        \theta \circ \mu \circ f,
\end{equation}
where $n_\lambda$ is a constant determined by $\lambda$ (and independent of the choice of $\lambda$ as orbit representative). As in \cite{MR2373317}, $n_\lambda$ can be written in terms of the sizes of orbits of group actions. If we let $H$ be the subgroup of $G$ defined by
\[
H = \bigg\{ h\in G \:\bigg|\: h_{ij} = 0 \text{ if } j\leq\frac{n}{2}\bigg\},
\]
then
\begin{equation}\label{nlambda}
n_\lambda = \frac{|G \cdot \lambda|}{|H \cdot \lambda|}.
\end{equation}

\begin{thmn}[\ref{sctofu}] The partition of $U$ given by $\mathcal{K} = \{K_u \mid u \in U\}$, along with the set of characters $\{\chi_\lambda \mid \lambda \in \frak{u}^*\}$, form a supercharacter theory of $U$. This supercharacter theory is independent of the choice of $\theta$ and $f$.

\end{thmn}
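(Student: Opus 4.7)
My strategy is to parallel the proof of Theorem~\ref{sctalggp} as laid out in Section~\ref{alggpsct}, replacing the two-sided action of $G$ on $\frak g$ by the twisted action $g \cdot x = gxg^\dagger$ and using the Springer morphism $f$ to transport data between $U$ and $\frak u$. Most of the axioms of a supercharacter theory are bookkeeping in this setup. Because $u \cdot x = uxu^{-1}$ for $u \in U$ and $f$ respects conjugation (condition~(2) of a Springer morphism), the $U$-orbits on $\frak u$ correspond via $f$ to the conjugacy classes of $U$, and hence each $K_u$ (a union of such $U$-orbits inside a $G$-orbit) is a union of conjugacy classes. The class function $\chi_\lambda$ is constant on $K_u$: if $f(v) = g \cdot f(u)$, then $\mu(f(v)) = (g^{-1}\cdot \mu)(f(u))$, and the substitution $\mu \mapsto g^{-1}\cdot \mu$ merely permutes $G \cdot \lambda$. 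Finally, $|\mathcal K| = |\mathcal X|$ reduces to showing that $G$ has equal numbers of orbits on $\frak u$ and on $\frak u^*$, which follows from Burnside's lemma together with the identity $|\frak u^g| = |(\frak u^*)^g|$ (since $g\cdot(-) - \mathrm{id}$ and its dual have the same kernel dimension on the finite $\mathbb F_q$-space $\frak u$).

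The essential step is to prove that each $\chi_\lambda$ is a genuine character of $U$. I plan to realize $\chi_\lambda$ as an induced character. The subgroup $H$ appearing in~(\ref{nlambda}), consisting of matrices supported on the right half of the columns, serves as a polarization for the $G$-action, and the antidiagonal symmetry of $\dagger$ provides a decomposition of $\frak g$ into pieces adapted to $H$ and $H^\dagger$. Using this, I would build a subgroup $W \leq U$ on which the function $\theta \circ \lambda \circ f$ restricts to a linear character, and show that $\mathrm{Ind}_W^U(\theta \circ \lambda \circ f) = \chi_\lambda$; the Frobenius formula naturally produces a sum of $\theta \circ \mu \circ f$ indexed by the $H$-orbit of $\lambda$, and the normalization $n_\lambda = |G\cdot\lambda|/|H\cdot\lambda|$ rescales this to the full $G$-orbit (in particular, $\chi_\lambda(1) = |H\cdot\lambda| = |U|/|W|$, as expected). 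Once the $\chi_\lambda$ are known to be characters, pairwise orthogonality across distinct $G$-orbits in $\frak u^*$ is a standard $\theta$-sum calculation, and the cardinality match from the first paragraph forces each irreducible of $U$ to appear in exactly one $\chi_\lambda$. Independence from $\theta$ and $f$ is then automatic, as different choices only permute the elements of $\mathcal X$.

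The main obstacle is the construction of the polarizing subgroup $W$ and the verification that $\theta\circ \lambda \circ f$ actually descends to a group homomorphism on $W$ rather than merely a function. This is exactly where the hypothesis on $\dagger$ enters: because $\dagger$ reflects matrix entries across the antidiagonal up to a scalar, the subgroups $H\cap U$ and $H^\dagger \cap U$ complement each other inside $U$ in a way compatible with the Lie bracket on $\frak u$, and the piece of $\frak u$ corresponding to $H$ ought to behave as an abelian ideal on which $\lambda$ can be integrated via $\theta\circ \lambda \circ f$. Once this polarization is in place, translating the Diaconis--Isaacs calculations to the single-sided twisted action should be routine.
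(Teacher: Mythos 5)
Your plan mirrors the paper's proof point for point: the paper also reduces part (b) to the orbit-counting lemma, a direct computation that $\chi_\lambda$ is a superclass function, and orthogonality of the $\theta\circ\mu\circ f$; and for part (a) it defines $U_\lambda = U\cap G_\eta$ (your $W$) from the polarizing ideal $\frak h$, shows $\theta\circ\lambda\circ f$ is linear on it via $\lambda(xy)=0$ on $\frak g_\eta$, and computes $\mathrm{Ind}_{U_\lambda}^U$ by the Frobenius/Mackey calculation, using $G=HU$ and an $H$-orbit lemma to rescale to the $G\cdot\lambda$ sum. The only cosmetic difference is that the paper proves independence from $f$ by showing the restricted linear character is literally unchanged (Corollary~\ref{linchar}(b)) rather than merely permuted, but your argument reaches the same conclusion.
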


The supercharacter theory is independent of $\theta$ and $f$ in that the sets $\mathcal{K}$ and $\{\chi_\lambda \mid \lambda \in \frak{u}^*\}$ do not depend on these functions. If a different $\theta$ is chosen or condition (2) in the definition of a Springer morphism is relaxed to allow for other $x$ coefficients, the $\chi_\lambda$ will be permuted. The supercharacter theory is also independent of the choice of subfield of $\mathbb{F}_{q^k}$; that is, if $\mathbb{F}$ is any subfield of $\mathbb{F}_{q^k}$ and $\dagger$ is an antiautomorphism of $\frak{g}$ when viewed as an $\mathbb{F}$-algebra, we get the same supercharacter theory as by considering $\frak{g}$ as an $\mathbb{F}_q$-algebra.  We will prove this theorem in Section~\ref{mainresultproof}, along with the following result that allows us to relate our supercharacter theories to those of Andr\'e--Neto.

\begin{thmn}[\ref{superclassintersect}]
    The superclasses of $U$ are exactly the sets of the form $U \cap K_g$, where $K_g$ is some superclass of $G$.
\end{thmn}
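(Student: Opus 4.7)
The proof splits into an easy containment and a hard one, the latter reducing to a set-theoretic identity inside $\frak{g}$. For the easy direction, fix $u\in U$. If $v\in K_u$, then $f(v)=gf(u)g^\dagger$ for some $g\in G$, so $f(v)\in Gf(u)G$ and hence $v$ lies in the $G$-superclass $K_u^G$. (We use here that $Gf(u)G=G(u-1)G$: a Springer morphism writes $f(u)$ as $(u-1)$ times an element of $G$, so the two-sided $G$-orbits defined by $f$ and by $u\mapsto u-1$ agree.) Thus every $U$-superclass lies in exactly one set of the form $U\cap K_g$, and it remains to prove the reverse containment $U\cap K_u^G\subseteq K_u$.

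Via the bijectivity of $f|_U:U\to\frak{u}$, this reverse containment is equivalent to the set-theoretic identity
\[
    GxG\cap\frak{u}=G\cdot x\qquad\text{for every } x\in\frak{u},
\]
where $G\cdot x=\{gxg^\dagger:g\in G\}$. The inclusion $\subseteq$ follows from $(gxg^\dagger)^\dagger=-gxg^\dagger$. For $\supseteq$, let $y=g_1xg_2\in\frak{u}$; applying $\dagger$ and using $x^\dagger=-x$, $y^\dagger=-y$ yields $y=g_2^\dagger xg_1^\dagger$, hence the symmetry relation $g_1xg_2=g_2^\dagger xg_1^\dagger$. Seeking $g\in G$ in the form $g=g_1s$ and setting $c=g_2g_1^{-\dagger}\in G$, one reduces to finding $s\in G$ with $sxs^\dagger=xc$; the symmetry relation forces $xc=c^\dagger x$, so the right-hand side already lies in $\frak{u}$.

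The plan is to construct such $s$ iteratively using the nilpotency of $\frak{g}$. Writing $s=1+a$ with $a\in\frak{g}$ and expanding $(1+a)x(1+a^\dagger)=x+(ax+xa^\dagger)+axa^\dagger$, the equation becomes $ax+xa^\dagger+axa^\dagger=xc-x$. One solves this level by level along the lower central series of $\frak{g}$: at each stage a linear equation of the form $Ax+xA^\dagger\equiv(\text{known})\pmod{\text{higher terms}}$ must be solved. The symmetry relation $xc=c^\dagger x$ ensures that the right-hand side is $\dagger$-antisymmetric at every stage; the pattern structure of $\frak{g}$ together with the hypothesis $(\alpha e_{ij})^\dagger\in\mathbb{F}_{q^k}^\times e_{\bar j\bar i}$ keeps the solutions inside $\frak{g}$; and odd characteristic supplies the $\tfrac12$ factors required in these solutions. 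Nilpotency guarantees that the iteration terminates. The main obstacle is precisely this inductive construction: verifying consistency of the linear equation at each stage and choosing solutions within the pattern subalgebra $\frak{g}$, rather than only in the ambient $\frak{ut}_n(\mathbb{F}_{q^k})$.
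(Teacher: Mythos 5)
Your setup (the easy direction, and the reduction of the hard direction to a single twisted conjugacy equation) matches the paper's. In the paper's notation, the key step is: given $x\in\frak{u}$ and $g\in G$ with $gx\in\frak{u}$, produce $h\in G$ with $gx=hxh^\dagger$. Your version, $sxs^\dagger=xc$ with $xc=c^\dagger x$, is the same equation after relabeling (take $c=g^\dagger$, $s=h$). So far so good, aside from a cosmetic slip: you label the containment $G\cdot x\subseteq GxG\cap\frak{u}$ as ``$\subseteq$'' when it is the $\supseteq$ direction of your displayed identity.

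The genuine gap is the final step, and you acknowledge it yourself: you sketch an iterative construction of $s=1+a$ level by level along the lower central series, but you never verify that the linear equation $Ax+xA^\dagger\equiv(\text{known})$ at each stage is consistent, or that its solution can be chosen inside the pattern subalgebra $\frak{g}$ rather than merely in $\frak{ut}_n(\mathbb{F}_{q^k})$. Without that verification the argument is a plan, not a proof. What you are missing is that no iteration is needed: the paper observes that the antisymmetry identity $gx=xg^\dagger$ propagates to $g^jx=x(g^j)^\dagger$ for all $j\geq 0$, and since $G$ is a $p$-group with $p$ odd, $g$ has odd order, so there is an odd integer $k$ with $g^{2k}=g$. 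Then
\[
gx = g^{2k}x = g^k\bigl(g^kx\bigr) = g^k\,x\,(g^k)^\dagger,
\]
so $h=g^k$ works in closed form. This replaces your entire inductive construction with one line, and it is precisely the step where your proposal stalls. I would encourage you to look for this kind of ``take a square root using odd order'' maneuver whenever a twisted fixed-point equation $hxh^\dagger=gx$ with $gx=xg^\dagger$ appears in odd characteristic.
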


\begin{remark} Note that the superclasses of $G$ are determined by an action of $G \times G$ on $\frak{g}$ (with one $G$ acting on each side), whereas the superclasses of $U$ only require a left action of $G$ on $\frak{u}$. This may seem strange, especially in light of Theorem~\ref{superclassintersect}. The reason that we only need one copy of $G$ to act on $\frak{u}$ is due to the fact that if $x \in \frak{u}$ and $gx \in \frak{u}$, then there exists $h \in G$ with $gx = hxh^\dagger$. In other words, because the elements of $\frak{u}$ respect an involution we only need one copy of $G$ acting on the left to construct the superclasses. For the details, see the proof of Theorem~\ref{superclassintersect}.
\end{remark}

\section{Supercharacter theories of unipotent orthogonal and symplectic groups}\label{examples}

Before we prove Theorem~\ref{sctofu}, we use it in this section to construct supercharacter theories for two families of groups.

\subsection{Supercharacter theories of unipotent orthogonal groups}

Let $J$ be the $n \times n$ matrix with ones on the antidiagonal and zeroes elsewhere, and let $x^t$ denote the transpose of a matrix $x$. For $q$ a power of an odd prime, define
\[
        O_{n}(\mathbb{F}_q) = \{g \in GL_{n}(\mathbb{F}_q) \mid g^{-1} = J g^t J\}
\]
along with the corresponding Lie algebra
\[
        \frak{o}_{n}(\mathbb{F}_q) = \{x \in \frak{gl}_{n}(\mathbb{F}_q) \mid -x = J x^t J\}.
\]
Define
\begin{align*}
        UO_{n}(\mathbb{F}_q) &= UT_{n}(\mathbb{F}_q) \cap O_{n}(\mathbb{F}_q) \text{ and} \\
        \frak{uo}_{n}(\mathbb{F}_q) &= \frak{ut}_{n}(\mathbb{F}_q) \cap \frak{o}_{n}(\mathbb{F}_q).
\end{align*}
Define an antiautomorphism $\dagger$ of $\frak{ut}_{n}(\mathbb{F}_q)$ by $x^\dagger = J x^t J$. Note that $\dagger$ satisfies the conditions required by Theorem~\ref{sctofu}, and furthermore
\begin{align*}
        UO_{n}(\mathbb{F}_q) &= \{g \in UT_{n}(\mathbb{F}_q) \mid g^{-1} = g^\dagger\} \text{ and} \\
        \frak{uo}_{n}(\mathbb{F}_q) &= \{x \in \frak{ut}_{n}(\mathbb{F}_q) \mid -x = x^\dagger\}.
\end{align*}
Define $K_u$ and $\chi_\lambda$ as in \ref{eqku} and \ref{eqchilambda} with $U = UO_n(\mathbb{F}_q)$ and $\frak{u} = \frak{uo}_n(\mathbb{F}_q)$. By Theorem~\ref{sctofu}, there is a supercharacter theory of $UO_n(\mathbb{F}_q)$ with superclasses $\{K_u\}$ and supercharacters $\{\chi_\lambda\}$.

\bigbreak

In \cite{MR2537684}, Andr\'e--Neto construct a supercharacter theory of $UO_n(\mathbb{F}_q)$. They show that their superclasses are the sets of the form $UO_n(\mathbb{F}_q)\cap K_g$, where $K_g$ is a superclass of $UT_n(\mathbb{F}_q)$ under the algebra group supercharacter theory. In particular, the following theorem follows from Theorem~\ref{superclassintersect}.

\begin{thm} The supercharacter theory of $UO_n(\mathbb{F}_q)$ defined above coincides with that of Andr\'e--Neto in \cite{MR2537684}.
\end{thm}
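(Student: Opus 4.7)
The plan is to combine Theorem~\ref{superclassintersect} with the observation that a supercharacter theory of a finite group is uniquely determined by its partition into superclasses. First I would invoke Theorem~\ref{superclassintersect}: since $UO_n(\mathbb{F}_q)$ arises exactly as the subgroup $U$ of $G = UT_n(\mathbb{F}_q)$ associated to the anti-involution $x \mapsto Jx^tJ$, the superclasses of our supercharacter theory are the nonempty sets of the form $UO_n(\mathbb{F}_q) \cap K_g$, where $K_g$ runs over the superclasses of the Diaconis--Isaacs supercharacter theory on $UT_n(\mathbb{F}_q)$.

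Next I would cite the result of Andr\'e--Neto recalled in the paragraph preceding the theorem: their superclasses are precisely the sets of the form $UO_n(\mathbb{F}_q) \cap K_g$ as well. Hence the two supercharacter theories of $UO_n(\mathbb{F}_q)$ induce the same partition of the group into superclasses.

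Finally, I would appeal to Hendrickson's theorem \cite{MR2989654} (mentioned in Section~\ref{sctig}), which puts supercharacter theories of a finite group in bijection with central Schur rings, and in particular shows that a supercharacter theory is completely determined by the partition of the group into superclasses. Since the two theories have identical superclasses, the corresponding sets of supercharacters must coincide as well, establishing the claim.

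There is essentially no remaining obstacle: all the real work has been done in proving Theorem~\ref{superclassintersect} and in the Andr\'e--Neto identification of their superclasses with the intersections $UO_n(\mathbb{F}_q) \cap K_g$. If one preferred to avoid citing Hendrickson, one could alternatively check directly that once the superclass partition is fixed, the supercharacters (as unnormalized sums of irreducible constituents) are forced, but the Schur-ring formulation makes this immediate.
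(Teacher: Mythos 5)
Your proposal is correct and follows the same approach as the paper: combine Theorem~\ref{superclassintersect} with Andr\'e--Neto's identification of their superclasses as the sets $UO_n(\mathbb{F}_q) \cap K_g$, then conclude the theories agree. The only thing you make explicit that the paper leaves implicit (behind the phrase ``follows from Theorem~\ref{superclassintersect}'') is the observation that a supercharacter theory is determined by its superclass partition; citing Hendrickson's Schur-ring bijection for that step, or the equivalent uniqueness statement already in Diaconis--Isaacs, is a legitimate and clean way to close the argument.
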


\bigbreak

We can also construct supercharacter theories of certain subgroups of $UO_n(\mathbb{F}_q)$ using this method. We will call a poset $\mathcal{P}$ a \emph{mirror poset} if $i \preceq_\mathcal{P} j$ implies that $\bar{j} \preceq_\mathcal{P} \bar{i}$ (recall that $\bar{i} = n-i+1$). The antiautomorphism $\dagger$ as defined above restricts to an antiautomorphism of $U_\mathcal{P}$ for any mirror poset. Furthermore,
\begin{align*}
        UO_{n}(\mathbb{F}_q)\cap U_\mathcal{P} &= \{g \in U_\mathcal{P} \mid g^{-1} = g^\dagger\} \text{ and} \\
        \frak{uo}_{n}(\mathbb{F}_q)\cap \frak{u}_\mathcal{P} &= \{x \in \frak{u}_\mathcal{P} \mid -x = x^\dagger\}.
\end{align*}
Define $K_u$ and $\chi_\lambda$ as in \ref{eqku} and \ref{eqchilambda} with $U = UO_{n}(\mathbb{F}_q)\cap U_\mathcal{P}$ and $\frak{u} = \frak{uo}_{n}(\mathbb{F}_q)\cap \frak{u}_\mathcal{P}$. By Theorem~\ref{sctofu}, there is a supercharacter theory of $UO_{n}(\mathbb{F}_q)\cap U_\mathcal{P}$ with superclasses $\{K_u\}$ and supercharacters $\{\chi_\lambda\}$. By Theorem~\ref{superclassintersect}, the superclasses are of the form $K_g \cap UO_n(\mathbb{F}_q)$ where $K_g$ is a superclass of $U_\mathcal{P}$ in the algebra group supercharacter theory. In particular, if $U$ is the unipotent radical of a parabolic subgroup of $O_n(\mathbb{F}_q)$ then $U = UO_n(\mathbb{F}_q)\cap U_\mathcal{P}$ for some mirror poset $\mathcal{P}$.

\bigbreak

There are two important examples of a subgroup obtained from a mirror poset in type $D$. First, let $P$ be the mirror poset on $[2n]$ defined by
\[
        i \preceq_\mathcal{P} j \text{ if } i \leq j \text{ and } (i,j) \neq (n,n+1).
\]
Then $UO_{2n}(\mathbb{F}_q) \cap U_\mathcal{P} = UO_{2n}(\mathbb{F}_q)$, and we get a second supercharacter theory of $UO_{2n}(\mathbb{F}_q)$ which is at least as fine as the one originally defined. This new supercharacter theory is in fact strictly finer than the original; the elements $e_{1,n}-e_{n+1,2n}$ and $(e_{1,n}-e_{n+1,2n})+(e_{1,n+1}-e_{n,2n})$ of $\frak{u}$ are in the same orbit under the action of $UT_{2n}(\mathbb{F}_q)$ on $\frak{uo}_{2n}(\mathbb{F}_q)$, but in different orbits under the action of $U_\mathcal{P}$ on $\frak{uo}_{2n}(\mathbb{F}_q)$.

\bigbreak

We can also consider the poset $\mathcal{P}$ on $[2n]$ defined by
\[
        i \preceq_\mathcal{P} j \text{ if } i\leq j\leq n \text{ or } n+1 \leq i \leq j.
\]
In this case, $UO_{2n}(\mathbb{F}_q) \cap U_\mathcal{P} \cong UT_n(\mathbb{F}_q)$, and the supercharacter theory obtained is the algebra group supercharacter theory.

\subsection{Supercharacter theories of unipotent symplectic groups}

Define
\[
        \Omega = \left(\begin{array}{cc} 0 & -J \\ J & 0 \end{array}\right),
\]
where once again $J$ is the $n \times n$ matrix with ones on the antidiagonal and zeroes elsewhere. For $q$ a power of an odd prime, define
\[
        Sp_{2n}(\mathbb{F}_q) = \{g \in GL_{2n}(\mathbb{F}_q) \mid g^{-1} = -\Omega g^t \Omega\}
\]
along with the corresponding Lie algebra
\[
        \frak{sp}_{2n}(\mathbb{F}_q) = \{x \in \frak{gl}_{2n}(\mathbb{F}_q) \mid -x = -\Omega x^t \Omega\}.
\]
Define
\begin{align*}
        USp_{2n}(\mathbb{F}_q) &= UT_{2n}(\mathbb{F}_q) \cap Sp_{2n}(\mathbb{F}_q) \text{ and} \\
        \frak{usp}_{2n}(\mathbb{F}_q) &= \frak{ut}_{2n}(\mathbb{F}_q) \cap \frak{sp}_{2n}(\mathbb{F}_q).
\end{align*}
Define an antiautomorphism $\dagger$ of $\frak{ut}_{2n}(\mathbb{F}_q)$ by $x^\dagger = -\Omega x^t \Omega$. Note that $\dagger$ satisfies the conditions required by Theorem~\ref{sctofu}, and furthermore
\begin{align*}
        USp_{2n}(\mathbb{F}_q) &= \{g \in UT_{2n}(\mathbb{F}_q) \mid g^{-1} = g^\dagger\} \text{ and} \\
        \frak{usp}_{2n}(\mathbb{F}_q) &= \{x \in \frak{ut}_{2n}(\mathbb{F}_q) \mid -x = x^\dagger\}.
\end{align*}
Define $K_u$ and $\chi_\lambda$ as in \ref{eqku} and \ref{eqchilambda} with $U = USp_{2n}(\mathbb{F}_q)$ and $\frak{u} = \frak{usp}_{2n}(\mathbb{F}_q)$. By Theorem~\ref{sctofu}, there is a supercharacter theory of $USp_{2n}(\mathbb{F}_q)$ with superclasses $\{K_u\}$ and supercharacters $\{\chi_\lambda\}$.

\bigbreak

In \cite{MR2537684}, Andr\'e--Neto have also constructed supercharacter theories of $USp_{2n}(\mathbb{F}_q)$. As was the case with the unipotent orthogonal groups, the superclasses are the sets of the form $USp_{2n}(\mathbb{F}_q)\cap K_g$, where $K_g$ is a superclass of $UT_{2n}(\mathbb{F}_q)$ under the algebra group supercharacter theory. In particular, the following theorem follows from Theorem~\ref{superclassintersect}.

\begin{thm} The supercharacter theory of $USp_{2n}(\mathbb{F}_q)$ defined above coincides with that of Andr\'e--Neto in \cite{MR2537684}.
\end{thm}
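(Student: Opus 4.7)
The plan is to argue exactly as in the orthogonal case, since all the heavy lifting has already been packaged into Theorem~\ref{superclassintersect}. Let $G = UT_{2n}(\mathbb{F}_q)$, let $\mathfrak{g} = \mathfrak{ut}_{2n}(\mathbb{F}_q)$, and let $\dagger$ be the anti-involution $x \mapsto -\Omega x^t \Omega$. By the preceding discussion, $\dagger$ satisfies the hypotheses of Theorem~\ref{sctofu}, so the pair $(\{K_u\}, \{\chi_\lambda\})$ is a genuine supercharacter theory of $USp_{2n}(\mathbb{F}_q)$. Applying Theorem~\ref{superclassintersect} to this setting, the superclasses of $USp_{2n}(\mathbb{F}_q)$ under our construction are precisely the nonempty sets of the form $USp_{2n}(\mathbb{F}_q) \cap K_g$, where $K_g$ ranges over the Diaconis--Isaacs superclasses of $UT_{2n}(\mathbb{F}_q)$.

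Next, I would cite the corresponding structural result from \cite{MR2537684}: Andr\'e--Neto prove that their superclasses of $USp_{2n}(\mathbb{F}_q)$ are obtained in exactly the same way, as intersections of $USp_{2n}(\mathbb{F}_q)$ with the Diaconis--Isaacs (equivalently, Andr\'e) superclasses of $UT_{2n}(\mathbb{F}_q)$. This is the symplectic analogue of the orthogonal statement invoked in the previous subsection, and no further work is needed at this step beyond pointing at their paper. Combining the two descriptions, our superclass partition of $USp_{2n}(\mathbb{F}_q)$ coincides, set-for-set, with the Andr\'e--Neto superclass partition.

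Finally, I would invoke the fact that a supercharacter theory of a finite group is determined by its partition into superclasses (equivalently, by the associated central Schur ring of Hendrickson \cite{MR2989654}): once the partition $\mathcal{K}$ is fixed, the supercharacters are, up to positive scalar normalization, the minimal characters that are constant on each member of $\mathcal{K}$ and each contain a unique collection of irreducible constituents. Hence agreement of the superclass partitions forces agreement of the full supercharacter theories, finishing the proof. The main conceptual point where something could go wrong is the second step, namely verifying that Andr\'e--Neto's superclasses are really the intersections described above; but this is precisely what they prove in \cite{MR2537684} (and is exactly the fact used in the orthogonal version of the theorem just above), so I would merely quote it rather than reprove it.
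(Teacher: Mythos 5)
Your proposal is correct and follows the paper's own route: apply Theorem~\ref{superclassintersect} to identify the superclasses as $USp_{2n}(\mathbb{F}_q)\cap K_g$, cite Andr\'e--Neto for the matching description of their superclasses, and conclude. You make explicit the final step (that a supercharacter theory is determined by its superclass partition, via Hendrickson's Schur-ring correspondence), which the paper leaves implicit but uses in the same way.
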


\bigbreak
We can also construct supercharacter theories of certain subgroups of $USp_{2n}(\mathbb{F}_q)$ just as we did for $UO_n(\mathbb{F}_q)$. The antiautomorphism $\dagger$ as defined above restricts to an antiautomorphism of $U_\mathcal{P}$ for any mirror poset. Furthermore,
\begin{align*}
        USp_{2n}(\mathbb{F}_q)\cap U_\mathcal{P} &= \{g \in U_\mathcal{P} \mid g^{-1} = g^\dagger\} \text{ and} \\
        \frak{usp}_{2n}(\mathbb{F}_q)\cap \frak{u}_\mathcal{P} &= \{x \in \frak{u}_\mathcal{P} \mid -x = x^\dagger\}.
\end{align*}

Define $K_u$ and $\chi_\lambda$ as in \ref{eqku} and \ref{eqchilambda} with $U = USp_{2n}(\mathbb{F}_q)\cap U_\mathcal{P}$ and $\frak{u} = \frak{usp}_{2n}(\mathbb{F}_q)\cap \frak{u}_\mathcal{P}$. By Theorem~\ref{sctofu}, there is a supercharacter theory of $USp_{2n}(\mathbb{F}_q)\cap U_\mathcal{P}$ with superclasses $\{K_u\}$ and supercharacters $\{\chi_\lambda\}$. By Theorem~\ref{superclassintersect}, the superclasses are of the form $K_g \cap USp_{2n}(\mathbb{F}_q)$ where $K_g$ is a superclass of $U_\mathcal{P}$ in the algebra group supercharacter theory. In particular, if $U$ is the unipotent radical of a parabolic subgroup of $Sp_{2n}(\mathbb{F}_q)$ then $U = USp_{2n}(\mathbb{F}_q)\cap U_\mathcal{P}$ for some mirror poset $\mathcal{P}$.

\section{Background}\label{background}

In order to prove Theorem~\ref{sctofu} we need a number of lemmas with regards to the interactions between groups and vector spaces. In this section we will establish these results before applying them in Sections~\ref{alggpsct} and \ref{mainresultproof}.

\subsection{Linear actions of groups on vector spaces}
Let $G$ be a finite group acting linearly on a finite dimensional vector space $V$ over a finite field. There is a corresponding linear action on the dual space $V^{*}$; for $\lambda \in V^{*}$, $g \in G$, and $v \in V$, define
\[
        (g \cdot \lambda) (v) = \lambda (g^{-1} \cdot v).
\]
The following lemma relating the number of orbits of these two actions appears in \cite{MR2373317}.
\begin{lem}[\cite{MR2373317}, Lemma 4.1]\label{orbitcounting} The actions of $G$ on $V$ and $V^{*}$ have the same number of orbits.\end{lem}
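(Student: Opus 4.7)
The plan is to apply Burnside's lemma (the Cauchy--Frobenius orbit-counting formula) to both actions and reduce the problem to a fixed-point count. Writing $V^g = \{v \in V \mid g \cdot v = v\}$ and $(V^*)^g = \{\lambda \in V^* \mid g \cdot \lambda = \lambda\}$, the number of orbits of $G$ on $V$ is $\frac{1}{|G|}\sum_{g \in G} |V^g|$ and the number of orbits on $V^*$ is $\frac{1}{|G|}\sum_{g \in G} |(V^*)^g|$. Hence it suffices to prove that $|V^g| = |(V^*)^g|$ for each individual $g \in G$.

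For fixed $g$, view the action of $g$ as a linear automorphism of $V$, and observe that $V^g = \ker(g - 1_V)$. For the dual side, unpacking the definition, $\lambda \in (V^*)^g$ means $\lambda(g^{-1}v) = \lambda(v)$ for all $v$, equivalently $\lambda$ annihilates the image of $g^{-1} - 1_V$. I would then note that $g^{-1} - 1_V = -g^{-1}(g - 1_V)$, so since $g^{-1}$ is an automorphism, $\mathrm{im}(g^{-1} - 1_V)$ and $\mathrm{im}(g - 1_V)$ have the same dimension. Therefore $(V^*)^g$ is naturally identified with the annihilator of a subspace of dimension $\mathrm{rank}(g - 1_V)$, which by the standard duality between a finite-dimensional space and its dual has dimension $\dim V - \mathrm{rank}(g - 1_V)$.

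By rank-nullity, this last number equals $\dim \ker(g - 1_V) = \dim V^g$. Since $V$ is a finite-dimensional space over a finite field of size $q$, equality of dimensions upgrades to equality of cardinalities: $|V^g| = q^{\dim V^g} = q^{\dim (V^*)^g} = |(V^*)^g|$. Summing over $g$ and dividing by $|G|$ gives the claim.

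The argument is essentially pure linear algebra with Burnside on top, so I do not expect a real obstacle; the only step requiring a moment's care is the identification of $(V^*)^g$ with the annihilator of $\mathrm{im}(g - 1_V)$, where one has to track the $g^{-1}$ in the definition of the dual action and verify that it does not change the dimension count.
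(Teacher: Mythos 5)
Your argument is correct, and it is essentially the standard proof: the paper itself does not prove this lemma but cites Diaconis--Isaacs (Lemma 4.1), whose proof is precisely the Burnside/Cauchy--Frobenius reduction to $|V^g| = |(V^*)^g|$ via the rank--nullity and annihilator computation that you carry out. You correctly handle the one potentially fiddly point (that $\mathrm{im}(g^{-1}-1)$ and $\mathrm{im}(g-1)$ have equal dimension because they differ by left-composition with the automorphism $-g^{-1}$), and the upgrade from equal dimensions to equal cardinalities over a finite field is exactly what is needed.
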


\subsection{Complex-valued functions of certain $p$-groups}

Let $G$ be a finite group, and let $V$ be a vector space over the finite field $\mathbb{F}_q$ such that there exists a bijection $f:G \to V$. Let $\theta : \mathbb{F}_q^+ \to \mathbb{C}^\times$ be a nontrivial linear character. We can use the vector space structure of $V$ to study the space of functions from $G$ to $\mathbb{C}$. The following lemma is a consequence of Lemma 5.1 in \cite{MR2373317}.

\begin{lem}\label{orthonormal} Let $G$, $V$ and $\theta$ be as above.
\begin{enumerate}[label=\emph{(\alph*)}]
\item The set of functions $\theta \circ \lambda$, where $\lambda \in V^*$, form an orthonormal basis for the space of functions from $V$ to $\mathbb{C}$.
\item The set of functions $\theta \circ \lambda \circ f$, where $\lambda \in V^*$, form an orthonormal basis for the space of functions from $G$ to $\mathbb{C}$.
\end{enumerate}
\end{lem}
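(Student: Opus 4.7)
The plan is to prove part (a) directly by a standard character-sum argument, and then deduce part (b) by transporting everything across the bijection $f$.

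For part (a), I would first note that the space $\mathbb{C}^{V}$ of complex-valued functions on $V$ has dimension $|V|$, which equals $|V^{*}|$, so the cardinality of the proposed basis is correct. It therefore suffices to check orthonormality with respect to the standard inner product $\langle \phi,\psi\rangle = \frac{1}{|V|}\sum_{v\in V}\phi(v)\overline{\psi(v)}$. For $\lambda,\mu\in V^{*}$, I would compute
\[
\langle\theta\circ\lambda,\theta\circ\mu\rangle = \frac{1}{|V|}\sum_{v\in V}\theta\bigl((\lambda-\mu)(v)\bigr),
\]
using that $\overline{\theta(a)}=\theta(-a)$. When $\lambda=\mu$ the summand is $1$ and the inner product is $1$. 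When $\lambda\neq\mu$, the functional $\lambda-\mu$ is a nonzero linear map $V\to\mathbb{F}_q$, hence surjective, and every fibre has size $|V|/q$; the sum collapses to $\frac{1}{q}\sum_{a\in\mathbb{F}_q}\theta(a)$, which is $0$ because $\theta$ is a nontrivial character of $\mathbb{F}_q^{+}$.

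For part (b), I would simply pull back along the bijection $f:G\to V$. The map $\phi\mapsto\phi\circ f$ is a unitary isomorphism from $\mathbb{C}^{V}$ (with the normalised inner product above) to $\mathbb{C}^{G}$ (with its own normalised inner product), since $|G|=|V|$ and $f$ is a bijection, so
\[
\langle\theta\circ\lambda\circ f,\theta\circ\mu\circ f\rangle_{G}=\langle\theta\circ\lambda,\theta\circ\mu\rangle_{V}=\delta_{\lambda,\mu}.
\]
Orthonormality together with the dimension count $|V^{*}|=|V|=|G|$ then gives a basis.

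There is no real obstacle here: the only substantive input is the orthogonality relation for the additive characters of $\mathbb{F}_q$, which is classical. The mild point to be careful about is the normalisation of the inner product (the factor $1/|V|$ is what makes the basis orthonormal rather than merely orthogonal), and the fact that $f$ is assumed only to be a bijection of sets, which is all that is needed to carry the orthonormality across.
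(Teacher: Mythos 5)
Your argument is correct and complete. The paper itself does not prove this lemma — it simply cites it as a consequence of Lemma~5.1 of Diaconis--Isaacs — so your direct character-orthogonality computation, transported across the bijection $f$, is a clean self-contained substitute. The one implicit step worth making explicit is that $\overline{\theta(a)}=\theta(-a)$ holds because $\theta$ takes values in roots of unity (as $\mathbb{F}_q^{+}$ is finite), but this is standard and your normalisation of the inner product is exactly what makes the basis orthonormal rather than merely orthogonal.
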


The next lemma will be useful in describing certain induced characters.

\begin{lem}\label{sum0}
    Let $V$ be a vector space of finite dimension over $\mathbb{F}_q$ with subspace $W$, and let $\lambda \in W^*$. Then
\[
        \frac{|W|}{|V|}\sum_{\substack{\mu \in V^* \\ \mu|_W = \lambda}} (\theta \circ \mu) (v) = \left \{ \begin{array}{ll} (\theta \circ \lambda) (v) & \quad \textup{if } v \in W, \\
        0 & \quad \textup{else.} \end{array} \right.
\]
\end{lem}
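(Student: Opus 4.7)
The plan is to reduce the sum over extensions of $\lambda$ to a sum over a subspace of $V^*$, and then apply a standard orthogonality-of-characters argument.

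First I would fix any single extension $\mu_0 \in V^*$ of $\lambda$; such an extension exists because any linear functional on a subspace of a finite-dimensional vector space extends to the whole space. Setting $W^\perp = \{\nu \in V^* \mid \nu|_W = 0\}$, the set of all extensions of $\lambda$ is precisely the coset $\mu_0 + W^\perp$. Since $W^\perp$ is naturally identified with $(V/W)^*$, it is a subspace of $V^*$ of dimension $\dim V - \dim W$, so $|W^\perp| = |V|/|W|$. The sum in question therefore rewrites as
\[
\sum_{\substack{\mu \in V^* \\ \mu|_W = \lambda}} \theta(\mu(v)) \;=\; \theta(\mu_0(v)) \sum_{\nu \in W^\perp} \theta(\nu(v)).
\]

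Next I would observe that the map $\nu \mapsto \theta(\nu(v))$ is an additive character of the abelian group $W^\perp$. By the standard orthogonality relation, $\sum_{\nu \in W^\perp} \theta(\nu(v))$ equals $|W^\perp|$ if this character is trivial, and $0$ otherwise. The character is trivial exactly when $\nu(v) = 0$ for every $\nu \in W^\perp$, which by duality of subspaces in finite-dimensional vector spaces is equivalent to $v \in W$ (if $v \notin W$, choose a complement of the line $\mathbb{F}_q v + W / W$ in $V/W$ and pull back a functional that is nonzero on $v$ and zero on $W$).

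Finally I would split into the two cases. If $v \in W$, then $\mu_0(v) = \lambda(v)$ and the inner sum equals $|W^\perp| = |V|/|W|$, so multiplying by $|W|/|V|$ yields $\theta(\lambda(v))$. If $v \notin W$, the inner sum vanishes and so does the whole expression. The main (and really only) subtlety is the duality argument identifying the kernel condition with membership in $W$; everything else is bookkeeping about cosets and the orthogonality of additive characters of a finite abelian group.
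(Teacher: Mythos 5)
Your proof is correct and is essentially the same argument as the paper's: both reduce the sum over extensions to a character sum over a group of size $|V|/|W|$ and invoke orthogonality of additive characters. The only cosmetic difference is parametrization --- the paper chooses a complement $W'$ with $V = W \oplus W'$ and sums over $(W')^*$, while you fix a base extension $\mu_0$ and sum over the annihilator coset $\mu_0 + W^\perp$; these are dual descriptions of the same set of extensions.
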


\begin{proof}
 Let $W'$ be a subspace of $V$ such that $V = W \oplus W'$. Let $v \in V$, and write $v = w +w'$, where $w \in W$ and $w' \in W'$. Then
 \begin{align*}
        \frac{|W|}{|V|}\sum_{\substack{\mu \in V^* \\ \mu|_W = \lambda}} (\theta \circ \mu) (v)
        & = \frac{|W|}{|V|}\sum_{\substack{\mu \in V^* \\ \mu|_W = \lambda}} (\theta \circ \mu) (w+w') \\
        & =  \frac{|W|(\theta \circ \lambda) (w)}{|V|}\sum_{\substack{\mu \in V^* \\ \mu|_W = \lambda}} (\theta \circ \mu)(w').
\end{align*}
Observe that the set of functionals $\mu|_{W'}$ such that $\mu|_W = \lambda$ is exactly $(W')^*$. Furthermore, for $w' \in W'$,
\[
    \sum_{\eta \in (W')^*} (\theta \circ \eta )(w') = \left\{
    \begin{array}{ll}
    |W'| &\quad \text{ if } w'=0, \\
    0 &\quad \text{ else,} \end{array} \right.
\]
as $\theta$ is nontrivial.
\end{proof}

\begin{cor}\label{regrep} If $f(1)=0$, then
\[
        \sum_{\lambda \in V^*} \theta \circ \lambda \circ f
\]
is the regular character of $G$. \end{cor}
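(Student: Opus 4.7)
The plan is to evaluate $\Psi(g) := \sum_{\lambda \in V^*} (\theta \circ \lambda \circ f)(g)$ pointwise on $G$ and check that the result agrees with the regular character $\rho_G$, which is characterized by $\rho_G(1) = |G|$ and $\rho_G(g) = 0$ for $g \neq 1$.

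The main tool is Lemma~\ref{sum0} applied in the degenerate case $W = \{0\}$, taking $\lambda$ to be the zero functional on the zero subspace. Then the constraint $\mu|_W = \lambda$ is vacuous and $|W| = 1$, so the lemma specializes to the standard additive character orthogonality
\[
\sum_{\mu \in V^*} \theta(\mu(v)) = \begin{cases} |V| & \text{if } v = 0, \\ 0 & \text{otherwise.} \end{cases}
\]

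Substituting $v = f(g)$ and using that $f$ is a bijection with $f(1) = 0$, one has $f(g) = 0$ if and only if $g = 1$, and moreover $|V| = |G|$. Therefore $\Psi(1) = |G|$ and $\Psi(g) = 0$ for all $g \neq 1$, which is exactly $\rho_G$. There is essentially no obstacle here; the corollary is an immediate consequence of Lemma~\ref{sum0} together with the bijectivity of $f$ and the hypothesis $f(1) = 0$.
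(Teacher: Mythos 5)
Your proof is correct and is precisely the argument the paper has in mind: the corollary is stated without proof immediately after Lemma~\ref{sum0}, and specializing that lemma to $W=\{0\}$ (so the restriction constraint is vacuous and $|W|=1$), then pulling back along the bijection $f$ with $f(1)=0$, is the intended deduction.
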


The groups we are studying in this paper are all naturally in bijection with a vector space. We can consider an algebra group $G = 1+\frak{g}$ along with the bijection
\begin{align*}
        f: G &\to \frak{g} \\
        1+x & \mapsto x .
\end{align*}
 We can also take our group $U$ to be as defined in Section~\ref{subgroups}, along with the corresponding Lie algebra $\frak{u}$ and a Springer morphism $f:U \to \frak{u}$. In these two cases, we can use the adjoint action of the group on its Lie algebra to understand certain induced representations.

\begin{lem}\label{induced} Suppose that $G$ is a finite group, $V$ is a vector space over $\mathbb{F}_q$, and $f:G \to V$ is a bijection. Suppose further that there is an action
\begin{align*}
        G \times V &\to V \\
        (g,v) & \mapsto g \cdot v.
\end{align*}
such that $f(h gh^{-1}) = h \cdot f(g)$ for all $g,h \in G$. If $H$ is a subgroup of $G$ such that $f(H)=W$ is a subspace of $V$, and $\lambda \in W^*$ is a functional such that $\theta \circ \lambda \circ f$ is a class function of $H$, then
\[
        \textup{Ind}_H^G(\theta \circ \lambda \circ f) =
        \frac{1}{|G|}\sum_{g \in G} \sum_{\substack{\mu \in V^* \\ \mu|_{W} = \lambda}} \theta \circ (g \cdot \mu) \circ f
\]
\end{lem}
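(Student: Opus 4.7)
The plan is to reduce this to the standard Frobenius formula for induction, and then rewrite the resulting ``extension by zero'' of $\psi = \theta\circ\lambda\circ f$ using Lemma~\ref{sum0}.

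First I would invoke the usual induction formula: letting $\dot{\psi}$ denote the extension of $\psi$ by zero from $H$ to $G$, one has
\[
    \textup{Ind}_H^G(\psi)(x) = \frac{1}{|H|}\sum_{g\in G}\dot{\psi}(g^{-1}xg),
\]
and the class-function hypothesis on $\psi$ is exactly what is needed to make this a well-defined class function of $G$. Because $f:G\to V$ is a bijection with $f(H)=W$, an element $y\in G$ lies in $H$ if and only if $f(y)\in W$, so Lemma~\ref{sum0} applied at $v=f(y)$ yields
\[
    \dot{\psi}(y) = \frac{|W|}{|V|}\sum_{\substack{\mu\in V^* \\ \mu|_W = \lambda}} (\theta\circ\mu\circ f)(y).
\]

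Next I would substitute $y = g^{-1}xg$ and apply the equivariance $f(g^{-1}xg) = g^{-1}\cdot f(x)$, together with the defining identity $\mu(g^{-1}\cdot v) = (g\cdot\mu)(v)$ of the dual action. Each summand then becomes
\[
    (\theta\circ\mu\circ f)(g^{-1}xg) = \theta\bigl((g\cdot\mu)(f(x))\bigr) = (\theta\circ(g\cdot\mu)\circ f)(x).
\]

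Combining everything yields
\[
    \textup{Ind}_H^G(\psi)(x) = \frac{|W|}{|H|\,|V|}\sum_{g\in G}\sum_{\substack{\mu\in V^* \\ \mu|_W = \lambda}} (\theta\circ(g\cdot\mu)\circ f)(x),
\]
and the constant simplifies via $|G|=|V|$ and $|H|=|W|$ (both coming from the bijection $f$ with $f(H)=W$) to $1/|G|$, as required. The main obstacle is really just an orientation check: one must verify that the dual action converts $g^{-1}\cdot f(x)$ into the translate $g\cdot\mu$ evaluated at $f(x)$ in the correct direction, and that the class-function hypothesis is genuinely being used to ensure $\dot{\psi}$ is well defined. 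Once this is set up, everything else is bookkeeping built on Lemma~\ref{sum0} and the standard induction formula.
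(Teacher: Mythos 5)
Your proposal is correct and follows essentially the same route as the paper's own proof: define the extension-by-zero of $\psi=\theta\circ\lambda\circ f$, expand it with Lemma~\ref{sum0}, plug into the Frobenius induction formula, and transfer the conjugation through $f$ to the dual action. The only cosmetic difference is that you conjugate as $g^{-1}xg$, which makes $g\cdot\mu$ appear directly, whereas the paper conjugates as $hgh^{-1}$ and then reindexes $h^{-1}\cdot\mu\mapsto h\cdot\mu$ at the end; your explicit remark that $|G|=|V|$ and $|H|=|W|$ via the bijection $f$ is a welcome bit of bookkeeping that the paper leaves implicit.
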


\begin{proof} Define $\gamma:G \to \mathbb{C}$ by
\[
        \gamma(g) = \left\{\begin{array}{ll}
        (\theta \circ \lambda \circ f)(g) & \quad \text{if } g \in H, \\
        0 & \quad \text{otherwise.} \end{array}\right.
\]
By Lemma~\ref{sum0},
\[
        \gamma = \frac{|H|}{|G|}\sum_{\substack{\mu \in V^* \\ \mu|_W = \lambda}} \theta \circ \mu \circ f.
\]
For $g \in G$,
\begin{align*}
        \text{Ind}_H^G(\theta \circ \lambda \circ f)(g)
        & = \frac{1}{|H|}\sum_{\substack{h \in G \\ hgh^{-1} \in H}} (\theta \circ \lambda \circ f)(hgh^{-1}) \\
        & = \frac{1}{|H|}\sum_{h \in G} \gamma(hgh^{-1}) \\
        & = \frac{1}{|G|}\sum_{h \in G}\sum_{\substack{\mu \in V^* \\ \mu|_W = \lambda}} (\theta \circ \mu \circ f)(hgh^{-1}) \\
        & = \frac{1}{|G|}\sum_{h \in G}\sum_{\substack{\mu \in V^* \\ \mu|_W = \lambda}} (\theta \circ (h^{-1} \cdot\mu) \circ f)(g) \\
        & = \frac{1}{|G|}\sum_{h \in G}\sum_{\substack{\mu \in V^* \\ \mu|_W = \lambda}} (\theta \circ (h \cdot \mu) \circ f)(g),
\end{align*}
using the fact that $f(h gh^{-1}) = h \cdot f(g)$.
\end{proof}

\section{Supercharacter theories of algebra groups}\label{alggpsct}

Let $G = 1+\frak{g}$ be an algebra group over the field $\mathbb{F}_q$, where $q$ is a power of a prime. Diaconis--Isaacs construct a supercharacter theory of $G$ in \cite{MR2373317}, which we describe here. Define
\begin{align*}
        f: G &\to \frak{g} \\
        g & \mapsto g-1 .
\end{align*}
Note that $G$ acts by left multiplication, right multiplication, and conjugation on $\frak{g}$ (with corresponding actions on $\frak{g}^*$). For $g \in G$, define
\[
        K_g = \{h \in G \mid f(h) \in Gf(g)G\}.
\]

\bigbreak

Let $\theta : \mathbb{F}_q^+ \to \mathbb{C}^\times$ be a nontrivial homomorphism. For $\lambda \in \frak{g}^*$, define
\[
        \chi_\lambda = \frac{|G\lambda|}{|G\lambda G|} \sum_{\mu \in G \lambda G}
        \theta \circ \mu \circ f.
\]
Note that the set $G \lambda G$ is the orbit of $\lambda$ under the action of $G \times G$ on $\frak{g}^*$ defined by $((g,h)\cdot\lambda)(x)=\lambda(g^{-1}xh)$. In particular, $G\lambda$ is the orbit of $\lambda$ under the action of the normal subgroup $G \times \{1\}$. It follows that $|G\mu|=|G\lambda|$ for all $\mu \in G\lambda G$, and the definition of $\chi_\lambda$ is independent of the choice of representative of $G\lambda G$.

\begin{thm}[\cite{MR2373317}]\label{sctalggp} Let $K_g$ and $\chi_\lambda$ be as above.
\begin{enumerate}[label=\emph{(\alph*)}]
\item The functions $\chi_\lambda$ are characters of $G$.
\item The partition of $G$ given by $\mathcal{K} = \{K_g \mid g \in G\}$, along with the set of characters $\{\chi_\lambda \mid \lambda \in \frak{g}^*\}$, form a supercharacter theory of $G$.
\end{enumerate}
\end{thm}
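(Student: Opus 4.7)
The plan is to verify the three axioms of a supercharacter theory together with the claim that each $\chi_\lambda$ is a character, by combining Lemma~\ref{induced} with the counting and orthogonality results of Section~\ref{background}. The essential observation is that $f(g)=g-1$ is equivariant with respect to conjugation, $f(hgh^{-1})=h\,f(g)\,h^{-1}$, so Lemma~\ref{induced} applies with $V=\frak{g}$ and the adjoint action of $G$. To show each $\chi_\lambda$ is a character, I would introduce the right annihilator $\frak{h}_\lambda=\{y\in\frak{g}\mid \lambda(xy)=0\text{ for all }x\in\frak{g}\}$; associativity makes $\frak{h}_\lambda$ a subalgebra, so $H_\lambda=1+\frak{h}_\lambda$ is an algebra subgroup, and $\theta\circ\lambda|_{\frak{h}_\lambda}\circ f$ is a linear (hence class) character of $H_\lambda$ because $\lambda(y_1y_2)=0$ whenever $y_1\in\frak{g}$ and $y_2\in\frak{h}_\lambda$. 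A direct analysis of the right $G$-action on $\frak{g}^*$, using the nilpotent expansion of $g^{-1}$, identifies $H_\lambda$ with the right stabilizer of $\lambda$, so the set of extensions of $\lambda|_{\frak{h}_\lambda}$ to $\frak{g}^*$ is exactly the right orbit $\lambda G$. Applying Lemma~\ref{induced} and recognizing the conjugation action $g\cdot\mu=g\mu g^{-1}$ as a subfamily of the two-sided $G\times G$-action, I would then show $\operatorname{Ind}_{H_\lambda}^G(\theta\circ\lambda|_{\frak{h}_\lambda}\circ f)$ is a positive integer multiple of $\chi_\lambda$; since induction produces characters, so does $\chi_\lambda$.

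For the remaining axioms, I would first check $|\mathcal{X}|=|\mathcal{K}|$: the sets $K_g$ are in bijection with the $G\times G$-orbits on $\frak{g}$ (via $f$), the characters $\chi_\lambda$ are indexed by the $G\times G$-orbits on $\frak{g}^*$, and Lemma~\ref{orbitcounting} (applied to $G\times G$ with its dual action) equates the two counts. For constancy of $\chi_\lambda$ on $K_g$, if $u\in K_g$ then $f(u)=a\,f(g)\,b$ for some $a,b\in G$, and the substitution $\mu\mapsto b^{-1}\mu a^{-1}$ permutes the two-sided orbit $G\lambda G$, yielding $\sum_{\mu\in G\lambda G}\theta(\mu(f(u)))=\sum_{\mu\in G\lambda G}\theta(\mu(f(g)))$ and hence $\chi_\lambda(u)=\chi_\lambda(g)$. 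For the irreducible-constituent condition, I would use Lemma~\ref{orthonormal}(b): the $\chi_\lambda$ are sums over \emph{disjoint} subsets of the orthonormal basis $\{\theta\circ\mu\circ f\mid\mu\in\frak{g}^*\}$, hence are pairwise orthogonal. Corollary~\ref{regrep} then expresses the regular character as a positive-integer-weighted sum of the $\chi_\lambda$; combined with Step~1, orthogonality forces each irreducible of $G$ to appear in exactly one $\chi_\lambda$.

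The main obstacle is the first step: constructing the right subgroup $H_\lambda$ and matching the induced character to $\chi_\lambda$ with the prescribed normalization $|G\lambda|/|G\lambda G|$. Concretely, the technical heart is identifying $\{\mu\in\frak{g}^*\mid \mu|_{\frak{h}_\lambda}=\lambda|_{\frak{h}_\lambda}\}$ with the right orbit $\lambda G$, which requires using the expansion $(1+y)^{-1}=1-y+y^2-\cdots$ (valid because $\frak{g}$ is nilpotent) to pin down exactly when an element of $G$ fixes $\lambda$ on the right, and then carefully counting the multiplicities with which each $\nu\in G\lambda G$ appears under the double sum produced by Lemma~\ref{induced}.
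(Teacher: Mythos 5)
Your overall plan—induce the linear character $\theta\circ\lambda|_{\frak{h}_\lambda}\circ f$ from the algebra subgroup $H_\lambda=1+\frak{h}_\lambda$ and apply Lemma~\ref{induced}, then verify the three axioms via Lemma~\ref{orbitcounting}, a direct computation, and Lemma~\ref{orthonormal}/Corollary~\ref{regrep}—is exactly the paper's strategy (with your $\frak{h}_\lambda$, $H_\lambda$ being the paper's $\frak{l}_\lambda$, $L_\lambda$). Your verification of axioms (1)–(3) in the second paragraph is correct and matches the paper.

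However, there is a genuine error in the step you yourself flag as the ``technical heart.'' You claim that the set of extensions $\{\mu\in\frak{g}^*\mid\mu|_{\frak{h}_\lambda}=\lambda|_{\frak{h}_\lambda}\}$ is the \emph{right} orbit $\lambda G$. It is not: it equals the \emph{left} orbit $G\lambda$ (this is the paper's Lemma~\ref{glambda}, quoting Lemma~4.2(d) of Diaconis--Isaacs). To see the discrepancy concretely, note that $\frak{h}_\lambda=\{y\mid\lambda(\frak{g}y)=0\}$ is the \emph{right} kernel of the bilinear form $(x,y)\mapsto\lambda(xy)$, and $(g\lambda-\lambda)(x)=\lambda((g^{-1}-1)x)$ vanishes for $x\in\frak{h}_\lambda$, giving $G\lambda-\lambda\subseteq\frak{h}_\lambda^\perp$; in contrast $(\lambda g-\lambda)(x)=\lambda(x(g^{-1}-1))$ need \emph{not} vanish on $\frak{h}_\lambda$. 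A small example makes this explicit: in $\frak{g}=\frak{ut}_3(\mathbb{F}_q)$ with $\lambda=e_{13}^*$ one finds $\frak{h}_\lambda=\operatorname{span}\{e_{12},e_{13}\}$, $G\lambda=\{e_{13}^*+a e_{23}^*\}=\lambda+\frak{h}_\lambda^\perp$ (the extensions), but $\lambda G=\{e_{13}^*-c e_{12}^*\}$, a different set. Your justification—``$H_\lambda$ is the right stabilizer of $\lambda$, so the set of extensions is $\lambda G$''—is a non-sequitur: the stabilizer computation only gives the size match $|\lambda G|=|\frak{g}|/|\frak{h}_\lambda|=|\frak{h}_\lambda^\perp|$, not the set equality, and indeed the set equality is false. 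The correct argument (as in the paper) checks that $G\lambda-\lambda\subseteq\frak{h}_\lambda^\perp$ directly and then uses the equality of dimensions of the left and right kernels to conclude $G\lambda=\lambda+\frak{h}_\lambda^\perp$. (Amusingly, if you push your mistaken identification through Lemma~\ref{induced} anyway, the double sum $\sum_{g}\sum_{\mu\in\lambda G}\theta\circ g\mu g^{-1}\circ f$ still telescopes to $\tfrac{|\lambda G|}{|G\lambda G|}\sum_{\mu\in G\lambda G}\theta\circ\mu\circ f=\chi_\lambda$ because $|\lambda G|=|G\lambda|$ and conjugation applied to either one-sided orbit sweeps out all of $G\lambda G$—but that does not repair the invalid intermediate step.)
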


We present a proof of this result as motivation for our proof of Theorem~\ref{sctofu}; our method is different from that in \cite{MR2373317}, although many of the ideas are similar. We will prove (a) by proving a more specific result given in Theorem~\ref{chara}. Assuming (a), we have the following.

\begin{proof}[Proof of \emph{(b)}.]

We need to show that conditions (1)-(3) in the definition of a supercharacter theory (see Section~\ref{sctig}) are satisfied. For (1), note that $|\mathcal{K}|$ is the number of orbits of the action of $G \times G$ on $\frak{g}$ defined by $(g,h) \cdot x = gxh^{-1}$. At the same time, $|\{\chi_\lambda \mid \lambda \in \frak{g}^*\}|$ is the number of orbits of the corresponding action of $G \times G$ on $\frak{g}^*$. By Lemma~\ref{orbitcounting}, the number of orbits of the two actions are equal.

\bigbreak

To demonstrate that condition (2) holds, choose $g \in G$ and $\lambda \in \frak{g}^*$; we have that
\begin{align*}
        \chi_\lambda(g)
        & = \frac{|G\lambda|}{|G\lambda G|} \sum_{\mu \in G \lambda G}
        (\theta \circ \mu \circ f)(g) \\
        & = \frac{|G\lambda|}{|G|^2} \sum_{h,k \in G}
        (\theta \circ h \lambda k \circ f)(g) \\
        & = \frac{|G\lambda|}{|G|^2} \sum_{h,k \in G}
        (\theta \circ \lambda)(h^{-1}f(g)k^{-1}) \\
        & = \frac{|G\lambda|}{|K_g|} \sum_{h \in K_g}
        (\theta \circ \lambda)(f(h)).
\end{align*}
It follows that $\chi_\lambda(g)$ only depends on the superclass of $g$.

\bigbreak

Condition (3) follows from Lemma~\ref{orthonormal} and Corollary~\ref{regrep}.
\end{proof}

It remains to prove (a). Define
\[
        \frak{l}_\lambda = \{x \in \frak{g} \mid \lambda(yx) = 0 \text{ for all } y \in \frak{g}\},
\]
and let $L_\lambda = 1+\frak{l}_\lambda$. It is worth mentioning that our notation differs from that of Diaconis--Isaacs. We define $\frak{l}_\lambda$ as above so that $\frak{l}_\lambda$ is the left ideal of $\frak{g}$ that corresponds to the left orbit $G\lambda$ as follows.

\begin{lem}[Lemma 4.2 (d), \cite{MR2373317}]\label{glambda} With notation as above, $G\lambda = \{\mu \in \frak{g}^* \mid \mu|_{\frak{l}_\lambda} = \lambda|_{\frak{l}_\lambda}\}$.
\end{lem}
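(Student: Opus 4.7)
The plan is to establish the two inclusions separately. The ``easy'' direction $G\lambda\subseteq\{\mu\in\frak{g}^*\mid\mu|_{\frak{l}_\lambda}=\lambda|_{\frak{l}_\lambda}\}$ will come from a one-line calculation. For the reverse inclusion I will argue by counting: compute $|G\lambda|$ via orbit--stabilizer, compute the size of the right-hand set directly, and show the two numbers agree; combined with the forward inclusion, this forces equality.

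For the forward inclusion, fix $g\in G$ and $x\in\frak{l}_\lambda$, and write $g^{-1}=1+z$ with $z\in\frak{g}$. Then $(g\lambda)(x)=\lambda(g^{-1}x)=\lambda(x)+\lambda(zx)=\lambda(x)$, since the defining property of $\frak{l}_\lambda$ gives $\lambda(yx)=0$ for every $y\in\frak{g}$, in particular for $y=z$. Hence $(g\lambda)|_{\frak{l}_\lambda}=\lambda|_{\frak{l}_\lambda}$, as required.

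For the reverse inclusion, I would first count the right-hand side: it is an affine translate of $\{\mu\mid\mu|_{\frak{l}_\lambda}=0\}$, so its cardinality is $q^{\dim\frak{g}-\dim\frak{l}_\lambda}$. To compute $|G\lambda|$ I would identify the stabilizer of $\lambda$ by running the forward calculation in reverse: $g\lambda=\lambda$ iff $\lambda((g^{-1}-1)x)=0$ for every $x\in\frak{g}$, i.e., $g^{-1}-1\in\frak{a}_\lambda:=\{z\in\frak{g}\mid\lambda(zx)=0\text{ for all }x\in\frak{g}\}$. Since $g\mapsto g^{-1}-1$ is a bijection $G\to\frak{g}$, the stabilizer has order $|\frak{a}_\lambda|$, and orbit--stabilizer yields $|G\lambda|=q^{\dim\frak{g}-\dim\frak{a}_\lambda}$.

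The main (and essentially only) obstacle is the apparent asymmetry between $\frak{a}_\lambda$, which annihilates $\lambda$ on the \emph{left}, and $\frak{l}_\lambda$, which annihilates it on the \emph{right}. The fix is to view both as radicals of the bilinear form $\beta\colon\frak{g}\times\frak{g}\to\mathbb{F}_q$ defined by $\beta(x,y)=\lambda(xy)$: $\frak{a}_\lambda$ is its left radical and $\frak{l}_\lambda$ its right radical. In finite dimension both radicals have codimension equal to the rank of $\beta$, so $\dim\frak{a}_\lambda=\dim\frak{l}_\lambda$. Consequently $|G\lambda|$ equals the size of the right-hand set in the lemma, and combined with the forward inclusion this completes the proof.
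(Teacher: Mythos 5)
Your proof is correct, but it takes a genuinely different route from the one the paper (following Diaconis--Isaacs) uses. The paper cites Lemma~\ref{glambda} without reproving it, but it does prove the parallel statement Lemma~\ref{hlambda}, and that argument applies verbatim here: one observes that $G\lambda - \lambda$ is a \emph{linear subspace} of $\frak{g}^*$, being the image of the linear map $\frak{g}\to\frak{g}^*$, $y\mapsto\lambda(y\,\cdot)$, since $\big((1+y)^{-1}\lambda - \lambda\big)(x) = \lambda(yx)$; one then notes that the annihilator of this subspace in $\frak{g}$ is, by definition, exactly $\frak{l}_\lambda$; and the finite-dimensional double-annihilator identity gives $G\lambda - \lambda = \{\mu\in\frak{g}^* \mid \mu|_{\frak{l}_\lambda}=0\}$ in one step. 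That argument never meets the left/right asymmetry you flag, because it relates the orbit directly to $\frak{l}_\lambda$ rather than going through the stabilizer. Your route proves one inclusion by hand, then matches cardinalities: orbit--stabilizer makes the stabilizer appear, which naturally involves the \emph{left} radical $\frak{a}_\lambda$ of the form $\beta(y,x)=\lambda(yx)$, and you then invoke the standard fact that the left and right radicals of a bilinear form on a finite-dimensional space have equal codimension to reconcile $\frak{a}_\lambda$ with $\frak{l}_\lambda$. Both proofs are sound. The annihilator argument is shorter and matches the paper; the counting argument is a bit longer but makes explicit the bilinear form $\beta$ and the role of its two radicals, which some readers may find illuminating.
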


Diaconis--Isaacs prove the following result as part of Theorem 5.4 in \cite{MR2373317}.

\begin{lem} The function $\textup{Res}_{L_\lambda}^G(\theta \circ \lambda \circ f)$ is a linear character of $L_\lambda$.
\end{lem}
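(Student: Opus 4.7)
The statement to prove is that $\theta \circ \lambda \circ f$, restricted from $G$ to $L_\lambda$, is a homomorphism $L_\lambda \to \mathbb{C}^\times$. Since $\theta$ takes values in $\mathbb{C}^\times$, linearity (degree one) is automatic once we verify multiplicativity, so the whole task reduces to checking that $(\theta \circ \lambda \circ f)(uv) = (\theta \circ \lambda \circ f)(u) \cdot (\theta \circ \lambda \circ f)(v)$ for all $u, v \in L_\lambda$. The plan is to reduce this to a single application of the defining property of $\mathfrak{l}_\lambda$.

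First, I would briefly record that $L_\lambda$ is actually a subgroup of $G$. The set $\mathfrak{l}_\lambda$ is a left ideal of $\mathfrak{g}$: if $x \in \mathfrak{l}_\lambda$ and $z \in \mathfrak{g}$, then for every $y \in \mathfrak{g}$ we have $\lambda(y \cdot zx) = \lambda((yz)x) = 0$, so $zx \in \mathfrak{l}_\lambda$. Combined with closure under addition, this gives closure under the algebra-group operation $x \ast y = x + y + xy$, hence $L_\lambda = 1 + \mathfrak{l}_\lambda$ is an algebra subgroup of $G$.

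Next, take $u = 1+x$ and $v = 1+y$ with $x, y \in \mathfrak{l}_\lambda$. Then $uv = 1 + (x + y + xy)$, so $f(uv) = x + y + xy$. Applying $\lambda$ and using linearity,
\[
        \lambda(f(uv)) = \lambda(x) + \lambda(y) + \lambda(xy).
\]
The key observation is that $\lambda(xy) = 0$: by definition of $\mathfrak{l}_\lambda$, the element $y \in \mathfrak{l}_\lambda$ satisfies $\lambda(zy) = 0$ for every $z \in \mathfrak{g}$, and specializing $z = x$ gives exactly $\lambda(xy) = 0$. Therefore $\lambda(f(uv)) = \lambda(f(u)) + \lambda(f(v))$, and since $\theta$ is a homomorphism from $\mathbb{F}_q^+$ to $\mathbb{C}^\times$,
\[
        (\theta \circ \lambda \circ f)(uv) = \theta(\lambda(f(u)) + \lambda(f(v))) = (\theta \circ \lambda \circ f)(u) \cdot (\theta \circ \lambda \circ f)(v).
\]
This establishes the homomorphism property, completing the proof.

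There is no real obstacle here; the entire content of the lemma is that the definition of $\mathfrak{l}_\lambda$ as a left annihilator (rather than, say, a two-sided ideal) is precisely what is needed to kill the cross-term $\lambda(xy)$ arising from the algebra-group multiplication. The construction is set up so that this one line does all the work.
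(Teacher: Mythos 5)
Your proof is correct and takes the same approach as the paper: apply $\lambda$ to $f((1+x)(1+y)) = x+y+xy$ and observe that the cross-term $\lambda(xy)$ vanishes because $y \in \mathfrak{l}_\lambda$. The only addition is your explicit verification that $\mathfrak{l}_\lambda$ is a left ideal (so $L_\lambda$ is a subgroup), which the paper takes for granted.
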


\begin{proof} Let $x,y \in \frak{l}_\lambda$; then
\begin{align*}
       (\theta \circ \lambda \circ f)((1+x)(1+y))
       & = \theta(\lambda(x+y+xy)) \\
       & = \theta(\lambda(x+y)) \\
       & = (\theta \circ \lambda \circ f)(1+x)(\theta \circ \lambda \circ f)(1+y).
\end{align*}
\end{proof}

We can now prove that the functions $\chi_\lambda$ are characters of $G$.

\begin{thm}[\cite{MR2373317}]\label{chara} With $\chi_\lambda$ as defined above,
\[
        \chi_\lambda = \textup{Ind}_{L_\lambda}^G (\textup{Res}_{L_\lambda}^G(\theta \circ \lambda \circ f))
\]
\end{thm}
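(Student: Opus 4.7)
The plan is to apply Lemma~\ref{induced} with $H = L_\lambda$, $V = \frak{g}$, $W = \frak{l}_\lambda$, and functional $\lambda|_{\frak{l}_\lambda}$. For the algebra group $G = 1+\frak{g}$ with $f(g) = g-1$, a brief direct computation gives $f(hgh^{-1}) = h(g-1)h^{-1}$, so the required action on $\frak{g}$ satisfying $f(hgh^{-1}) = h \cdot f(g)$ is ordinary conjugation $h \cdot x = hxh^{-1}$, which on the dual corresponds to the two-sided action $h \cdot \mu = h\mu h^{-1}$ in the notation of Section~\ref{alggpsct}. The lemma immediately preceding Theorem~\ref{chara} supplies the class-function hypothesis needed for Lemma~\ref{induced}: $\text{Res}_{L_\lambda}^G(\theta \circ \lambda \circ f) = \theta \circ \lambda|_{\frak{l}_\lambda} \circ f$ is a linear character of $L_\lambda$.

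Invoking Lemma~\ref{induced} then gives
\[
\text{Ind}_{L_\lambda}^G(\text{Res}_{L_\lambda}^G(\theta \circ \lambda \circ f)) = \frac{1}{|G|}\sum_{g \in G}\sum_{\substack{\mu \in \frak{g}^* \\ \mu|_{\frak{l}_\lambda} = \lambda|_{\frak{l}_\lambda}}} \theta \circ (g \cdot \mu) \circ f,
\]
and by Lemma~\ref{glambda} the inner indexing set equals the left orbit $G\lambda$. The problem therefore reduces to identifying the double sum $\frac{1}{|G|}\sum_{g \in G}\sum_{\mu \in G\lambda} \theta \circ (g\mu g^{-1}) \circ f$ with $\chi_\lambda$.

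The remainder is bookkeeping with orbit sizes. I will parametrize $\mu \in G\lambda$ by $h \in G$ via $\mu = h\lambda$, introducing a factor of $|G\lambda|/|G|$ from orbit-stabilizer, and then substitute $k = gh$ so that the sum becomes $\frac{|G\lambda|}{|G|^2}\sum_{g,k \in G}\theta \circ (k\lambda g^{-1}) \circ f$. The map $G \times G \to G\lambda G$ given by $(k,g) \mapsto k\lambda g^{-1}$ is the orbit map of the commuting left/right actions of $G \times G$, so all of its fibers have the common size $|G|^2/|G\lambda G|$; collecting constants yields $\frac{|G\lambda|}{|G\lambda G|}\sum_{\nu \in G\lambda G}\theta \circ \nu \circ f = \chi_\lambda$.

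No step presents a real obstacle; the conceptual work was already invested in Lemma~\ref{induced} and Lemma~\ref{glambda}. The only point requiring care is recognizing the conjugation action on $\frak{g}^*$ as the two-sided action $\mu \mapsto g\mu g^{-1}$, since this is precisely what allows the substitution $k = gh$ to decouple the indices and transform a conjugation-twisted sum over the one-sided orbit $G\lambda$ into a single sum over the two-sided orbit $G\lambda G$.
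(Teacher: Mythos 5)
Your proof is correct and matches the paper's argument essentially step for step: apply Lemma~\ref{induced} to the linear character $\textup{Res}_{L_\lambda}^G(\theta\circ\lambda\circ f)$, identify the inner indexing set with $G\lambda$ via Lemma~\ref{glambda}, reparametrize the orbit and reindex the resulting double sum over $G\times G$ to convert conjugation of a left orbit into a single average over the two-sided orbit $G\lambda G$. Your explicit remarks on the substitution $k=gh$ and the constant fiber size of the orbit map $(k,g)\mapsto k\lambda g^{-1}$ merely spell out what the paper leaves implicit.
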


\begin{proof} By Lemma~\ref{induced} and Lemma~\ref{glambda},
\begin{align*}
        \textup{Ind}_{L_\lambda}^G (\textup{Res}_{L_\lambda}^G(\theta \circ \lambda \circ f))
        & = \frac{1}{|G|}\sum_{g \in G} \sum_{\substack{\mu \in \frak{g}^* \\ \mu|_{\frak{l}_\lambda} = \lambda|_{\frak{l}_\lambda}}} \theta \circ g\mu g^{-1} \circ f \\
        & = \frac{1}{|G|}\sum_{g \in G} \sum_{\mu \in G\lambda} \theta \circ g\mu g^{-1} \circ f \\
        & = \frac{|G\lambda|}{|G|^2}\sum_{g \in G} \sum_{h \in G} \theta \circ g (h\mu)g^{-1} \circ f \\
        & = \frac{|G\lambda|}{|G|^2}\sum_{h,g \in G}\theta \circ g\mu h \circ f \\
        & = \frac{|G\lambda|}{|G \lambda G|}\sum_{\mu \in G \lambda G} \theta \circ \mu  \circ f.
\end{align*}
\end{proof}

\section{Supercharacter theories of unipotent groups defined by anti-involutions}\label{mainresultproof}

In this section we construct supercharacter theories of the groups $U$ that were introduced in Section~\ref{mainresult}. Let $q$ be a power of an odd prime, and let $G = 1+\frak{g}$ be a pattern subgroup of $UT_n(\mathbb{F}_{q^k})$ for some $n$ and $k$. We consider $\frak{g}$ as an $\mathbb{F}_q$-algebra; let $\dagger$ be an anti-involution of $\frak{g}$ such that $(\alpha e_{ij})^\dagger \in \mathbb{F}_{q^k}^\times e_{\bar{j}\bar{i}}$ for all $\alpha \in \mathbb{F}_{q^k}^\times$ (recall that $\bar{i}=n+1-i$). Define
\[
        U = \{u \in G \mid u^{\dagger} = u^{-1}\}
\]
and
\[
        \frak{u} = \{x \in \frak{g} \mid x^\dagger = -x\}.
\]
Let $f$ be any Springer morphism and let $\theta:\mathbb{F}_q^+ \to \mathbb{C}^\times$ be a nontrivial homomorphism. Recall that there are left actions of $G$ and $\frak{g}$ on $\frak{g}$ defined by
\begin{align*}
    g \cdot x & = gxg^\dagger \\
    y * x & = yx + xy^\dagger
\end{align*}
for $g \in G$ and $x,y \in \frak{g}$, along with corresponding actions on $\frak{g}^*$.

\bigbreak

In the construction of the supercharacter theories of algebra groups, the normal subgroup $G \times 1$ of $G \times G$ plays an important role. We need an analogous subgroup of $G$ to construct a supercharacter theory of $U$. Let
\[
\frak{h} = \bigg\{ x\in \frak{g} \:\bigg|\: x_{ij} = 0 \text{ if } j\leq\frac{n}{2}\bigg\},
\]
and define $H = \frak{h}+1$. Note that $\frak{h}$ is a two-sided ideal of $\frak{g}$, hence $H$ is a normal subgroup of $G$.

\bigbreak

For $u \in U$ and $\lambda \in \frak{u}^*$, define
\[
        K_u = \{ v \in U \mid f(v) \in G \cdot f(u)\}
\]
and
\[
        \chi_\lambda = \frac{|H \cdot \lambda|}{|G \cdot \lambda|} \sum_{\mu \in G \cdot \lambda}
        \theta \circ \mu \circ f.
\]
As $H$ is a normal subgroup of $G$, $|H \cdot \lambda|$ is independent of the choice of orbit representative of $G \cdot \lambda$.

\begin{thm}\label{sctofu} Let $K_u$ and $\chi_\lambda$ be as above.
\begin{enumerate}[label=\emph{(\alph*)}]
\item The functions $\chi_\lambda$ are characters of $U$.
\item The partition of $U$ given by $\mathcal{K} = \{K_u \mid u \in U\}$, along with the set of characters $\{\chi_\lambda \mid \lambda \in \frak{u}^*\}$, form a supercharacter theory of $U$.
\end{enumerate}
\end{thm}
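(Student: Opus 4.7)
The plan is to mirror the proof of Theorem~\ref{sctalggp} step by step, with the single $\cdot$-action of $G$ on $\frak{u}$ playing the role of the two-sided action of $G \times G$ on $\frak{g}$, and with the normal subgroup $H \trianglelefteq G$ taking over the role that $G \times \{1\}$ played there. Left multiplication is replaced by the $*$-action $y*x = yx + xy^\dagger$, which is the infinitesimal form of the $\cdot$-action. The bridge between sums over $G$ and sums over $U$ will be a factorization $G = UH$ with constant fiber size $|U \cap H|$, which I would treat as a separate structural lemma about anti-involution-fixed subgroups of unitriangular groups; this factorization is what allows $|H \cdot \lambda|$ to replace the left-orbit size $|G\lambda|$ and $|G \cdot \lambda|$ to replace $|G\lambda G|$ in the coefficient of $\chi_\lambda$.

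For part (a), I would define
\[
\frak{l}_\lambda = \{x \in \frak{u} \mid \lambda(y*x) = 0 \text{ for all } y \in \frak{h}\}
\]
and set $L_\lambda = f^{-1}(\frak{l}_\lambda) \subset U$. The aim is to identify $\chi_\lambda$ with $\textup{Ind}_{L_\lambda}^U(\textup{Res}_{L_\lambda}^U(\theta \circ \lambda \circ f))$. First I would prove the analog of Lemma~\ref{glambda}, namely $H \cdot \lambda = \{\mu \in \frak{u}^* \mid \mu|_{\frak{l}_\lambda} = \lambda|_{\frak{l}_\lambda}\}$; one inclusion is immediate from the definition, and the other follows by a dimension count using that the unipotence of $H$ forces the true orbit size to match the infinitesimal dimension $|\frak{u}/\frak{l}_\lambda|$. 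Next I would verify that $L_\lambda$ is a subgroup of $U$ and that $\theta \circ \lambda \circ f$ restricts to a linear character there. Both checks use the Springer expansion $f(1+x) = x + \sum_{i \geq 2} a_i x^i$: the higher-order corrections to the transported group law $a \oplus b = f(f^{-1}(a)f^{-1}(b))$ on $\frak{u}$ are built from iterated brackets $[a,b] = a*b$, and $\lambda$ vanishes on these by the defining condition of $\frak{l}_\lambda$. Finally, Lemma~\ref{induced} applied with $U$ acting on $\frak{u}$ by conjugation gives
\[
\textup{Ind}_{L_\lambda}^U(\textup{Res}_{L_\lambda}^U(\theta \circ \lambda \circ f)) = \frac{1}{|U|}\sum_{u \in U}\sum_{\mu \in H \cdot \lambda} \theta \circ (u \cdot \mu) \circ f,
\]
and using $(uh)\cdot\lambda = u \cdot (h \cdot \lambda)$ together with the $UH$-factorization, this double sum collapses to $\chi_\lambda$ with the prescribed coefficient.

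For part (b), the three supercharacter-theory axioms are verified directly. Condition (1) follows from Lemma~\ref{orbitcounting}: superclasses are indexed by $G$-orbits on $\frak{u}$ (via $f$) and the supercharacters by $G$-orbits on $\frak{u}^*$, and these orbit counts coincide. For condition (2), a direct substitution shows $\sum_{\mu \in G \cdot \lambda}\theta(\mu(g\cdot f(u))) = \sum_{\mu \in G \cdot \lambda}\theta((g^{-1}\cdot\mu)(f(u)))$ is invariant under re-indexing the orbit. For condition (3), by part (a) each $\chi_\lambda$ is a character, and Lemma~\ref{orthonormal}(b) shows distinct $G$-orbits give orthogonal $\chi_\lambda$; orthogonality of nonnegative integer combinations of irreducibles forces disjoint irreducible constituents. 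Completeness then follows from $\sum_{\text{orbits}} n_\lambda \chi_\lambda = \sum_{\mu \in \frak{u}^*} \theta \circ \mu \circ f$, which is the regular character by Corollary~\ref{regrep}. The main technical obstacle will be the two subgroup-and-linearity checks in part (a): unlike in the algebra-group case, $\frak{l}_\lambda$ is not a one-sided ideal of an associative algebra, so the linearity argument must pass through the $*$-action and the antisymmetry $a^\dagger = -a$ to convert associative products into Lie brackets inside $\frak{u}$. A second nontrivial ingredient is the factorization $G = UH$ underlying the coefficient-matching in the induction step, which I expect to handle by a dimension count based on $\frak{g} = \frak{u} + \frak{h}$.
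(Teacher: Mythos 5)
Your high-level plan matches the paper: the $\cdot$-action of $G$ replaces the two-sided action, $H$ replaces $G\times\{1\}$, the factorization $G=UH$ (proved via $\frak{g}=\frak{h}+\frak{u}$ and a cardinality count) supplies the coefficient $|H\cdot\lambda|/|G\cdot\lambda|$, and part (b) goes through exactly as you describe via Lemma~\ref{orbitcounting}, Lemma~\ref{orthonormal}, and Corollary~\ref{regrep}. But there is a genuine gap at exactly the point you flag as ``the main technical obstacle,'' and your proposed resolution does not close it.

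You define $\frak{l}_\lambda\subset\frak{u}$ by the $*$-condition against $\frak{h}$, set $L_\lambda=f^{-1}(\frak{l}_\lambda)$, and then claim that $L_\lambda$ is a subgroup and that $\theta\circ\lambda\circ f$ is linear on it because ``the higher-order corrections to the transported group law are built from iterated brackets, and $\lambda$ vanishes on these by the defining condition of $\frak{l}_\lambda$.'' Two problems. First, the defining condition of $\frak{l}_\lambda$ asserts $\lambda(y*x)=0$ only for $y\in\frak{h}$, not for $y\in\frak{l}_\lambda$, so it gives you nothing about $\lambda([x,x'])$ for $x,x'\in\frak{l}_\lambda$. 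Second, a BCH-type expansion of $a\oplus b=f(f^{-1}(a)f^{-1}(b))$ into Lie polynomials is not available for a general Springer morphism (the Cayley map, for instance, has no free-Lie-algebra BCH series), so the reduction to brackets is itself unjustified. The paper sidesteps both issues by \emph{leaving} $\frak{u}$: extend $\lambda\in\frak{u}^*$ uniquely to $\eta\in\frak{g}^*$ with $\eta(x)=-\eta(x^\dagger)$, form the associative subalgebra $\frak{g}_\eta=\frak{l}_\eta\cap\frak{r}_\eta\subset\frak{g}$, prove $\eta(xy)=0$ for all $x,y\in\frak{g}_\eta$ by decomposing $xy=x'y+xy'$ with $x'\in\frak{h}$, $y'\in\frak{h}^\dagger$ (Lemma~\ref{kernel}), and then observe that $U_\lambda=U\cap G_\eta$ is automatically a subgroup because $G_\eta=1+\frak{g}_\eta$ is an algebra group, while $\eta(p(x,y))=0$ kills all degree-$\geq2$ terms of $f((1+x)(1+y))$ at once, with no need to identify them as brackets. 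If you want to stay inside $\frak{u}$ you would have to reprove this vanishing there, and that argument does not appear to exist without the extension to $\eta$.

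A second, smaller omission: even the inclusion you call ``immediate'' in your analog of Lemma~\ref{glambda}, namely $H\cdot\lambda\subseteq\{\mu\in\frak{u}^*\mid\mu|_{\frak{l}_\lambda}=\lambda|_{\frak{l}_\lambda}\}$, depends on the identity $h\cdot x=(h-1)*x+x$ for $h\in H$, which in turn requires $\frak{h}\frak{g}\frak{h}^\dagger=0$ (the paper's Lemma~\ref{actionscommute}, using the half-support structure of $\frak{h}$). This same identity underlies the linearity of the map $h\mapsto h^{-1}\cdot\lambda$ needed for your dimension count. You should isolate and prove it; it is not a formal consequence of $\frak{h}$ being an ideal. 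Once you add the extension $\lambda\mapsto\eta$, Lemma~\ref{kernel}, and $\frak{h}\frak{g}\frak{h}^\dagger=0$, the rest of your outline (the $H$-orbit description, Lemma~\ref{induced}, and the $G=UH$ collapse of the double sum) matches the paper's Theorem~\ref{ucharacters} and goes through.
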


We will prove (a) by proving a more specific result given by Theorem~\ref{ucharacters}. Assuming (a), we have the following.

\begin{proof}[Proof of \emph{(b)}] We need to show that conditions (1)-(3) in the definition of a supercharacter theory (see Section~\ref{sctig}) are satisfied. For condition (1), note that $|\mathcal{K}|$ is the number of orbits of the action of $G$ on $\frak{u}$. At the same time, $|\{\chi_\lambda \mid \lambda \in \frak{u}^*\}|$ is the number of orbits of the corresponding action of $U$ on $\frak{u}^*$. By Lemma~\ref{orbitcounting}, the number of orbits of the two actions are equal.

\bigbreak

To demonstrate that condition (2) holds, choose $u \in U$ and $\lambda \in \frak{u}^*$; we have that
\begin{align*}
        \chi_\lambda(u)
        & = \frac{|H \cdot \lambda|}{|G\cdot \lambda|} \sum_{\mu \in G\cdot \lambda}
        (\theta \circ \mu \circ f)(u) \\
        & = \frac{|H \cdot \lambda|}{|G|} \sum_{g \in G}
        (\theta \circ g\cdot \lambda  \circ f)(u) \\
        & = \frac{|H \cdot \lambda|}{|G|} \sum_{g \in G}
        (\theta \circ \lambda)(g^{-1} \cdot f(u)) \\
        & = \frac{|H \cdot \lambda|}{|K_u|} \sum_{v \in K_u}
        (\theta \circ \lambda)(f(v)).
\end{align*}
It follows that $\chi_\lambda(u)$ only depends on the superclass of $u$.

\bigbreak

Condition (3) follows from Lemma~\ref{orthonormal} and Corollary~\ref{regrep}.
\end{proof}

It remains to prove (a). For a fixed $\lambda \in \frak{g}^*$, we define several subalgebras of $\frak{g}$. Let
\begin{align*}
        \frak{l}_\lambda &= \{x \in \frak{g} \mid \lambda(yx)=0 \text{ for all }y \in \frak{h}\}, \\
        \frak{r}_\lambda &= \{x \in \frak{g} \mid \lambda(xy)=0 \text{ for all }y \in \frak{h}^\dagger\}, \text{ and } \\
        \frak{g}_\lambda &= \frak{l}_\lambda \cap \frak{r}_\lambda.
\end{align*}
We also define the corresponding algebra subgroups
\begin{align*}
        L_\lambda &= 1+ \frak{l}_\lambda, \\
        R_\lambda &= 1+ \frak{r}_\lambda, \text{ and} \\
        G_\lambda &= 1 + \frak{g}_\lambda = L_\lambda \cap R_\lambda.
\end{align*}

\begin{lem}\label{hlambda} With notation as above, $H\lambda = \{\mu \in \frak{g}^* \mid \mu|_{\frak{l}_\lambda} = \lambda |_{\frak{l}_\lambda}\}$. \end{lem}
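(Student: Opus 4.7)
My plan is to mimic the proof of the corresponding statement for algebra groups (Lemma~\ref{glambda}), with the normal subgroup $H$ playing the role that $G$ played there. The argument has two steps: an inclusion of $H\lambda$ into the right-hand affine subspace, verified by a direct computation, and then a cardinality match obtained from orbit--stabilizer together with the nondegeneracy of a natural bilinear pairing.

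For the inclusion, I would take $h = 1+y \in H$ with $y \in \frak{h}$. Because $\frak{h}$ is a two-sided ideal of $\frak{g}$ (noted just above in the section), the element $z = h^{-1} - 1 = \sum_{i\geq 1}(-y)^i$ again lies in $\frak{h}$, so for every $x \in \frak{l}_\lambda$,
\[
        (h\lambda)(x) = \lambda(h^{-1}x) = \lambda(x) + \lambda(zx) = \lambda(x)
\]
by the defining property of $\frak{l}_\lambda$. This shows $H\lambda \subseteq \{\mu \in \frak{g}^* \mid \mu|_{\frak{l}_\lambda} = \lambda|_{\frak{l}_\lambda}\}$, and the latter set, being an affine subspace of $\frak{g}^*$ determined by the restriction to $\frak{l}_\lambda$, has cardinality $q^{\dim_{\mathbb{F}_q}(\frak{g}/\frak{l}_\lambda)}$.

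To match cardinalities, I compute $|H\lambda|$ via orbit--stabilizer. With $z = h^{-1}-1$, the equation $h\lambda = \lambda$ becomes $\lambda(zx) = 0$ for all $x \in \frak{g}$, and $y \mapsto z$ is a bijection $\frak{h} \to \frak{h}$, so the stabilizer has size $q^{\dim \frak{h}_0}$ where $\frak{h}_0 = \{z \in \frak{h} \mid \lambda(zx) = 0 \text{ for all } x \in \frak{g}\}$. The bilinear pairing
\[
        B: \frak{h} \times \frak{g} \to \mathbb{F}_q, \qquad B(y,x) = \lambda(yx),
\]
has left radical $\frak{h}_0$ and right radical $\frak{l}_\lambda$, so the induced nondegenerate pairing on $(\frak{h}/\frak{h}_0) \times (\frak{g}/\frak{l}_\lambda)$ forces $\dim(\frak{h}/\frak{h}_0) = \dim(\frak{g}/\frak{l}_\lambda)$. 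Therefore $|H\lambda| = q^{\dim(\frak{g}/\frak{l}_\lambda)}$, matching the right-hand side, and combined with the inclusion this finishes the proof.

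I do not anticipate any serious obstacle: the whole argument is a routine linear-algebra adaptation of the original Diaconis--Isaacs calculation. The one point that must be used is that $\frak{h}$ is closed under $y \mapsto (1+y)^{-1}-1$, which is exactly what the two-sided ideal property delivers; without this the inclusion step would fail.
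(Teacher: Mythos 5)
Your proof is correct, but it follows a slightly different route than the paper's. The paper observes directly that $H\lambda-\lambda$ is a linear subspace of $\frak{g}^*$ (it is the image of the linear map $\frak{h}\to\frak{g}^*$, $y\mapsto(x\mapsto\lambda(yx))$, via the identity $((1+y)^{-1}\lambda-\lambda)(x)=\lambda(yx)$), notes that the annihilator of this subspace in $\frak{g}$ is precisely $\frak{l}_\lambda$, and then concludes by the double-annihilator theorem that $H\lambda-\lambda$ is the annihilator of $\frak{l}_\lambda$, which is exactly the claimed affine subspace shifted by $\lambda$. You instead prove the inclusion $H\lambda\subseteq\{\mu:\mu|_{\frak{l}_\lambda}=\lambda|_{\frak{l}_\lambda}\}$ by hand and then count both sides: orbit--stabilizer gives $|H\lambda|=q^{\dim(\frak{h}/\frak{h}_0)}$, and the nondegenerate pairing on $(\frak{h}/\frak{h}_0)\times(\frak{g}/\frak{l}_\lambda)$ forces $\dim(\frak{h}/\frak{h}_0)=\dim(\frak{g}/\frak{l}_\lambda)$, which is the size of the target affine subspace. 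Both arguments rest on the same underlying bilinear form $B(y,x)=\lambda(yx)$; the paper's version gets the equality in one stroke from annihilator duality and so never needs to invoke orbit--stabilizer or explicitly identify the stabilizer, whereas yours is a bit longer but more elementary and makes the orbit size transparent. Either is an acceptable proof.
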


\begin{proof} Note that $H\lambda -\lambda$ is a subspace of $\frak{g}^*$, and for $x \in \frak{g}$ and $y \in \frak{h}$,
\[
        ((1+y)^{-1}\lambda-\lambda)(x) = \lambda(yx).
\]
It follows that $\frak{l}_\lambda = \{x \in \frak{g} \mid \mu(x) = 0 \text{ for all } \mu \in H\lambda - \lambda\}$, hence $H\lambda - \lambda = \{\mu \in \frak{g}^* \mid \mu(x)=0 \text{ for all } x \in \frak{l}_\lambda\}$.
\end{proof}

\begin{lem}\label{kernel} For any $\lambda \in \frak{g}^*$, we have that $\lambda(xy) = 0$ for all $x,y \in \frak{g}_\lambda$.
\end{lem}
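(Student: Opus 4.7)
The plan is to expand $xy = \sum_{i,k,j} x_{ik}\, y_{kj}\, e_{ij}$ and split the sum over the intermediate index $k$ at the threshold $n/2$; the $k>n/2$ portion will be killed by $y\in\frak l_\lambda$, while the $k\leq n/2$ portion will be killed by $x\in\frak r_\lambda$. The hypothesis $(\alpha e_{ij})^\dagger \in \mathbb F_{q^k}^\times e_{\bar j\bar i}$ guarantees that $\frak h^\dagger$ consists exactly of the matrices in $\frak g$ supported on rows $\leq n/2$, so that $e_{ik}\in\frak h$ precisely when $k>n/2$ and $e_{kj}\in\frak h^\dagger$ precisely when $k\leq n/2$ (assuming in each case that the position lies in the pattern of $\frak g$).

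For each fixed $(i,k)$ with $k>n/2$, I would observe that
\[
\sum_j x_{ik}\, y_{kj}\, e_{ij} \;=\; (x_{ik}\, e_{ik})\, y.
\]
Whenever $(i,k)$ lies in the pattern, $x_{ik}\, e_{ik}\in \frak h$, so $\lambda\bigl((x_{ik} e_{ik})\, y\bigr)=0$ by $y\in\frak l_\lambda$; otherwise $x_{ik}=0$ and the contribution vanishes trivially. Summing over $i$ and $k>n/2$ then shows the $k>n/2$ part of $\lambda(xy)$ is zero.

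Symmetrically, for each $(k,j)$ with $k\leq n/2$, the expression $\sum_i x_{ik}\, y_{kj}\, e_{ij}$ equals $x\,(y_{kj}\, e_{kj})$ with $y_{kj}\, e_{kj}\in\frak h^\dagger$ (or $y_{kj}=0$), so $\lambda\bigl(x\,(y_{kj} e_{kj})\bigr)=0$ by $x\in\frak r_\lambda$. Adding the two halves yields $\lambda(xy)=0$.

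The argument is essentially bookkeeping: a single rearrangement of the triple sum, together with the observation that the threshold $n/2$ cleanly separates the ``$\frak h$-side'' from the ``$\frak h^\dagger$-side'' of the matrix product. No cross-cancellation between the two halves is needed -- in particular, one never has to track terms landing in $\frak h \cap \frak h^\dagger$ -- which is precisely why this short proof suffices.
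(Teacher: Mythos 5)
Your proof is correct and follows the same essential strategy as the paper's: split the matrix product $xy$ according to whether the intermediate index $k$ exceeds $n/2$, and kill the $k>n/2$ part using $y\in\frak l_\lambda$ (paired against elements of $\frak h$) and the $k\leq n/2$ part using $x\in\frak r_\lambda$ (paired against elements of $\frak h^\dagger$). The paper packages this more compactly by defining truncations $x'$ (columns $>n/2$ of $x$) and $y'$ (rows $\leq n/2$ of $y$) and observing $xy=x'y+xy'$ with $x'\in\frak h$, $y'\in\frak h^\dagger$, whereas you carry out the same split one elementary matrix at a time. One small imprecision worth noting: for $n$ odd, $\frak h^\dagger$ actually contains matrices supported on row $(n+1)/2$ as well, so ``$e_{kj}\in\frak h^\dagger$ \emph{precisely} when $k\leq n/2$'' overstates things -- but only the implication $k\leq n/2 \Rightarrow e_{kj}\in\frak h^\dagger$ is used, and that direction is correct, so the argument goes through.
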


\begin{proof} For $x,y \in \frak{g}_\lambda$, define elements $x'$ and $y'$ of $\frak{g}$ by
\[
        (x')_{ij} = \left\{\begin{array}{ll}
        x_{ij} & \quad \text{if } j>\frac{n}{2} \\
        0 &\quad \text{else} \end{array}\right.
\]
and
\[
        (y')_{ij} = \left\{\begin{array}{ll}
        y_{ij} & \quad \text{if } i\leq \frac{n}{2} \\
        0 &\quad \text{else.}\end{array}\right.
\]
Note that
\[
        (x'y)_{ij} = \sum_{k>\frac{n}{2}} x_{ik}y_{kj}
\]
and
\[
        (xy')_{ij} = \sum_{k\leq\frac{n}{2}} x_{ik}y_{kj} .
\]
It follows that $xy = x'y+xy'$. Observe that $x' \in \frak{h}$ and $y' \in \frak{h}^\dagger$; as $x,y \in \frak{g}_\lambda$,
\[
        \lambda(xy) = \lambda(x'y)+\lambda(xy') = 0.
\]
\end{proof}

A corollary of this result will allow us to conclude that our supercharacter theories are independent of the choice of Springer morphism $f$.

\begin{cor}\label{linchar} Let $\lambda \in \frak{g}^*$; then
\begin{enumerate}[label=\emph{(\alph*)}]
\item the function $\textup{Res}_{G_\lambda}^G(\theta \circ \lambda \circ f)$ is a linear character of $G_\lambda$, and
\item if $f'$ is another Springer morphism, $\textup{Res}_{G_\lambda}^G(\theta \circ \lambda \circ f) = \textup{Res}_{G_\lambda}^G(\theta \circ \lambda \circ f')$.
\end{enumerate}
\end{cor}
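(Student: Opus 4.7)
The plan is to exploit Lemma~\ref{kernel} together with the fact that $\frak{h}$ (and hence $\frak{h}^\dagger$) is a two-sided ideal of $\frak{g}$ to reduce $\theta\circ\lambda\circ f$ to the much simpler expression $\theta\circ\lambda$ on $\frak{g}_\lambda$; once this reduction is in place both parts follow immediately.

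First I would verify that $\frak{g}_\lambda$ is actually closed under multiplication, which is what makes $G_\lambda = 1+\frak{g}_\lambda$ a genuine algebra subgroup and justifies expanding the Springer morphism term-by-term. For $a,b\in\frak{g}_\lambda$ and $y\in\frak{h}$ the ideal property gives $ya\in\frak{h}$, whence $\lambda(y\cdot ab) = \lambda((ya)\cdot b) = 0$ because $b\in\frak{l}_\lambda$; the symmetric computation with $\frak{h}^\dagger$ shows $ab\in\frak{r}_\lambda$, so $ab\in\frak{g}_\lambda$. The same inductive argument shows $x^i\in\frak{g}_\lambda$ for every $x\in\frak{g}_\lambda$ and $i\geq 1$.

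With that in hand, the key identity is
\[
\lambda(f(1+x)) = \lambda(x) \qquad \text{for all } x\in\frak{g}_\lambda,
\]
since $f(1+x) = x + \sum_{i\geq 2} a_i x^i$ and each higher term factors as $x \cdot x^{i-1}$ with both factors in $\frak{g}_\lambda$, which Lemma~\ref{kernel} annihilates. Part~(a) then reduces to a short computation: since $(1+x)(1+y) = 1 + (x+y+xy)$ and $x+y+xy \in \frak{g}_\lambda$, applying the key identity to $x+y+xy$ and invoking Lemma~\ref{kernel} once more to kill $\lambda(xy)$ gives
\[
\lambda(f((1+x)(1+y))) = \lambda(x+y+xy) = \lambda(x)+\lambda(y),
\]
so $\theta\circ\lambda\circ f$ is multiplicative on $G_\lambda$, hence a linear character. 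Part~(b) is even easier, because the key identity depends on $f$ only through the existence of a Springer-type expansion: the same computation gives $\lambda(f(1+x)) = \lambda(x) = \lambda(f'(1+x))$ on $G_\lambda$ for any two Springer morphisms $f$ and $f'$, so $\theta\circ\lambda\circ f$ and $\theta\circ\lambda\circ f'$ agree there.

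The only step that requires any care is the closure of $\frak{g}_\lambda$ under multiplication and under powers; everything after that is a direct application of Lemma~\ref{kernel}, and in a real sense this closure is the entire content of the corollary.
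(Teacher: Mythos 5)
Your proof is correct and follows essentially the same route as the paper: expand the Springer morphism in powers of $x$ and use Lemma~\ref{kernel} to kill all degree-$\geq 2$ terms, so that $\lambda\circ f$ reduces to $\lambda$ on $\frak{g}_\lambda$, from which both multiplicativity and $f$-independence are immediate. Your preliminary check that $\frak{g}_\lambda$ is closed under multiplication is a worthwhile addition the paper leaves implicit (it is what justifies calling $G_\lambda=1+\frak{g}_\lambda$ an algebra subgroup, and it is quietly used when applying Lemma~\ref{kernel} to the higher-degree monomials), but it does not change the underlying argument.
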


\begin{proof} Let $x,y \in \frak{g}_\lambda$; then
\[
        f((1+x)(1+y)) = x+y + p(x,y),
\]
where $p(x,y)$ is a polynomial in $x$ and $y$ with all terms of degree at least two. By Lemma~\ref{kernel}, $\lambda(p(x,y))=0$. It follows that
\begin{align*}
        (\theta \circ \lambda \circ f)((1+x)(1+y))
        & = \theta(\lambda(x+y)) \\
        & = \theta(\lambda(x))\theta(\lambda(y)).
\end{align*}
At the same time,
\begin{align*}
        (\theta \circ \lambda \circ f)(1+x)(\theta \circ \lambda \circ f)(1+y)
        & = \theta\bigg(\lambda\bigg(x+\sum_{i=2}^\infty a_i x^i\bigg)\bigg)\theta\bigg(\lambda\bigg(y+\sum_{i=2}^\infty a_i y^i\bigg)\bigg) \\
        & = \theta(\lambda(x))\theta(\lambda(y)),
\end{align*}
and $\textup{Res}_{G_\lambda}^G(\theta \circ \lambda \circ f)$ is a linear character of $G_\lambda$. Note that
\[
        \textup{Res}_{G_\lambda}^G(\theta \circ \lambda \circ f)(x) = \theta (\lambda(x)),
\]
a formula independent of $f$, proving (b).
\end{proof}

\bigbreak

There are two properties of the subgroup $H$ that will be useful in future calculations.

\begin{lem}\label{actionscommute} For $h \in H$ and $x \in \frak{g}$, $h \cdot x = (h-1)*x+x$.
\end{lem}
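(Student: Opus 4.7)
The plan is a direct computation. I would expand $h \cdot x$ using the definition $h \cdot x = h x h^\dagger$, compare with $(h-1)\ast x + x$ using the definition of $\ast$, and reduce the identity to a single cubic vanishing, which I expect to fall out of the support conditions built into $\frak{h}$ and the hypothesis on $\dagger$.

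Concretely, I would write $h = 1 + y$ with $y = h-1 \in \frak{h}$. Since $\dagger$ is an anti-involution of $\frak{g}$ with $1^\dagger = 1$, we have $h^\dagger = 1 + y^\dagger$, so
\[
h \cdot x \;=\; (1+y)\, x\, (1+y^\dagger) \;=\; x + yx + x y^\dagger + y x y^\dagger,
\]
while on the other side $(h-1)\ast x + x = y \ast x + x = x + yx + x y^\dagger$ by definition of $\ast$. Cancelling common terms, the claim is equivalent to the single identity $y x y^\dagger = 0$.

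To prove that vanishing I would track supports of matrix entries. The condition $y \in \frak{h}$ says that $y_{ij} \neq 0$ only when $j > n/2$, i.e.\ $y$ is supported in the right half of the columns. The hypothesis $(\alpha e_{ij})^\dagger \in \mathbb{F}_{q^k}^\times e_{\bar{j}\bar{i}}$ then forces $y^\dagger$ to be a linear combination of matrix units $e_{\bar{j}\bar{i}}$ with $y_{ij} \neq 0$, so any nonzero entry $(y^\dagger)_{kl}$ has row index $k = \bar{j} = n+1-j$ with $j > n/2$; the integer inequalities give $k \leq n/2$ in both parities of $n$. Finally, since $\frak{g} \subseteq \frak{ut}_n(\mathbb{F}_{q^k})$ is strictly upper triangular, $x_{jk} \neq 0$ forces $j < k$. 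A nonzero term in
\[
(yxy^\dagger)_{il} \;=\; \sum_{j,k} y_{ij}\, x_{jk}\, (y^\dagger)_{kl}
\]
would then require $j > n/2$, $k \leq n/2$, and $j < k$ simultaneously, which is impossible. Hence $yxy^\dagger = 0$ and the lemma follows. No real obstacle is anticipated; the only subtlety is carrying out the index bookkeeping carefully for both even and odd $n$, but the strict inequality $j < k$ from upper triangularity makes the contradiction clean in either case.
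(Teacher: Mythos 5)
Your proof is essentially the paper's own argument: write $h=1+y$, reduce the identity to $yxy^\dagger = 0$ (equivalently $\frak{h}\frak{g}\frak{h}^\dagger = 0$), and verify this by tracking the column/row supports forced by $\frak{h}$, by $\dagger$, and by strict upper triangularity. One small arithmetic slip: for odd $n$, $j > n/2$ gives $k = n+1-j \leq (n+1)/2$, not $k \leq n/2$, so you only get $j \geq k$ rather than $j > k$; the contradiction still follows from the strict inequality $j < k$ coming from $x \in \frak{ut}_n$, which is precisely how the paper closes the argument and which your final sentence already anticipates.
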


\begin{proof} Let $h = 1+y$; then
\[
        h \cdot x = yxy^\dagger+yx+xy^\dagger +x
\]
and
\[
        (h-1)*x+x = yx+xy^\dagger+x.
\]
It suffices to show that $\frak{h}\frak{g}\frak{h}^\dagger = 0$. Note that $\frak{h}\frak{g}\frak{h}^\dagger$ is generated by elements of the form $e_{ij}e_{kl}e_{rs}$ with $j>\frac{n}{2}$ and $r < \frac{n}{2}+1$. This means that $j \geq r$, and as $k < l$,  $e_{ij}e_{kl}e_{rs}=0$.
\end{proof}

\begin{lem} We have that $G = HU$.
\end{lem}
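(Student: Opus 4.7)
The plan is to reduce the equality $G = HU$ to a dimension identity. Since $H$ and $U$ are subgroups of $G$, the standard formula gives $|HU| = |H|\cdot|U|/|H\cap U|$, and $HU \subseteq G$ is automatic; so it suffices to prove the cardinality identity $|H|\cdot|U| = |G|\cdot|H\cap U|$. The Springer morphism $f$ is a bijection $G \to \frak{g}$ that restricts to bijections $H \to \frak{h}$ (by condition (2) of the Springer morphism applied to the algebra subgroup $H$) and $U \to \frak{u}$, and hence also $H \cap U \to \frak{h} \cap \frak{u}$. Translating cardinalities into $\mathbb{F}_q$-dimensions, the goal becomes
\[
\dim\frak{h} + \dim\frak{u} = \dim\frak{g} + \dim(\frak{h}\cap\frak{u}),
\]
which, via the inclusion--exclusion formula for dimensions of sums of subspaces, is equivalent to the assertion $\frak{h} + \frak{u} = \frak{g}$.

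To prove $\frak{h} + \frak{u} = \frak{g}$, I would use the $\mathbb{F}_q$-decomposition $\frak{g} = \frak{g}^+ \oplus \frak{u}$, where $\frak{g}^+ = \{x \in \frak{g} : x^\dagger = x\}$; this is valid in odd characteristic via $x = \frac{1}{2}(x + x^\dagger) + \frac{1}{2}(x - x^\dagger)$. It therefore suffices to show $\frak{g}^+ \subseteq \frak{h} + \frak{u}$, equivalently, that the $\mathbb{F}_q$-linear map $\psi \colon \frak{h} \to \frak{g}^+$, $\psi(y) = y + y^\dagger$, is surjective. Granted surjectivity: given $s \in \frak{g}^+$ and $y \in \frak{h}$ with $\psi(y) = 2s$, one checks that $(s-y)^\dagger = s - (2s - y) = -(s-y)$, so $s = y + (s-y) \in \frak{h} + \frak{u}$.

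Surjectivity of $\psi$ I would verify position-by-position using the $\dagger$-orbits on the pattern $P$. Each orbit is either a singleton $(i,j)$ with $i + j = n+1$ (automatically in $\frak{h}$ since $j > n/2$) or a $2$-element orbit $\{(i,j),(\bar j, \bar i)\}$; in the latter case at least one of the two positions lies in $\frak{h}$, since otherwise $j \leq n/2$ and $\bar i \leq n/2$ together force $i \geq n/2+1 > j$, contradicting $i < j$. For a singleton one takes $y_{ij} = s_{ij}/2$, using that $s_{ij}$ is fixed by the $\mathbb{F}_q$-linear involution $\dagger|_{\mathbb{F}_{q^k} e_{ij}}$ together with the odd-characteristic assumption. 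For a $2$-element orbit, one supports $y$ on any position in the orbit lying in $\frak{h}$ and uses $s^\dagger = s$ to verify consistency of the equations at both positions. The main obstacle is bookkeeping the scalars, since the restriction of $\dagger$ to each line $\mathbb{F}_{q^k} e_{ij}$ need not be $\mathbb{F}_{q^k}$-linear (as in the unitary case, where it involves a Frobenius twist); but the entire argument takes place at the level of $\mathbb{F}_q$-vector spaces, on which $\dagger$ is always an involution by hypothesis.
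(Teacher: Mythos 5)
Your proof is correct, and it follows the paper's high-level skeleton: reduce $G = HU$ via $|HU| = |H||U|/|H\cap U|$ and the Springer-morphism bijections to the vector-space identity $\frak{g} = \frak{h} + \frak{u}$, then verify that identity. Where you diverge from the paper is in the verification. The paper simply writes down an explicit $y \in \frak{u}$ with $x - y \in \frak{h}$, namely $y_{ij} = x_{ij}$ for $j \le n/2$, $y_{ij} = -(x^\dagger)_{ij}$ for $i \ge n/2 + 1$, and $y_{ij} = 0$ otherwise; one line then confirms $x - y \in \frak{h}$. You instead pass through the $\pm$-eigenspace decomposition $\frak{g} = \frak{g}^+ \oplus \frak{u}$ for the involution $\dagger$ (valid in odd characteristic), reduce to surjectivity of $\psi\colon \frak{h} \to \frak{g}^+$, $y \mapsto y + y^\dagger$, and then check surjectivity $\dagger$-orbit by $\dagger$-orbit on the pattern. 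Both routes hinge on exactly the same structural fact — every antidiagonal-reflection orbit on the pattern has at least one position with column index $> n/2$ — so neither is more general, but yours makes the role of the $\pm$-decomposition explicit while the paper's is shorter and more computational. One tiny slip: in the singleton-orbit case you should take $y_{ij} = s_{ij}$, not $s_{ij}/2$, if you want $\psi(y) = 2s$ as in the first half of your argument (with $y_{ij} = s_{ij}/2$ you get $\psi(y) = s$, which is what surjectivity literally requires, so nothing breaks — just keep the normalization consistent).
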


\begin{proof} Let $x \in \frak{g}$; define $y \in \frak{u}$ by
\[
        y_{ij} = \left\{\begin{array}{ll}
        x_{ij} &\quad \text{if } j\leq \frac{n}{2} \\
        -(x^\dagger)_{ij} & \quad \text{if } i \geq \frac{n}{2}+1 \\
        0 &\quad \text{else}\end{array}\right. .
\]
Note that $x-y \in \frak{h}$, hence $\frak{g} = \frak{h}+\frak{u}$. It follows that
\[
        |HU|  = \frac{|H||U|}{|H \cap U|}
         = \frac{|f(H)||f(U)|}{|f(H \cap U)|}
         = \frac{|\frak{h}||\frak{u}|}{|\frak{h} \cap \frak{u}|}
         = |\frak{g}| = |G|
\]
and the result follows.
\end{proof}

In order to use the above results to study $U$, we will need to extend the elements of $\frak{u}^*$ to elements of $\frak{g}^*$. There are of course many possible ways to do this, however there is one natural choice in our case.

\begin{lem} Given any $\lambda \in \frak{u}^*$, there exists a unique $\eta \in \frak{g}^*$ such that $\eta|_\frak{u} = \lambda$ and $\eta(x) = -\eta(x^\dagger)$ for all $x \in \frak{g}$.
\end{lem}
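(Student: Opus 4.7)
The plan is to exploit the fact that we are in odd characteristic to decompose $\mathfrak{g}$ as an internal direct sum of the skew and symmetric parts with respect to $\dagger$. Set
\[
    \mathfrak{u}^+ = \{x \in \mathfrak{g} \mid x^\dagger = x\}.
\]
Since $q$ is a power of an odd prime, $2$ is invertible in $\mathbb{F}_q$, so every $x \in \mathfrak{g}$ decomposes uniquely as $x = x_- + x_+$ with $x_- = \tfrac{1}{2}(x - x^\dagger) \in \mathfrak{u}$ and $x_+ = \tfrac{1}{2}(x + x^\dagger) \in \mathfrak{u}^+$. Thus $\mathfrak{g} = \mathfrak{u} \oplus \mathfrak{u}^+$ as $\mathbb{F}_q$-vector spaces.

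For uniqueness, suppose $\eta \in \mathfrak{g}^*$ satisfies both conditions. For any $y \in \mathfrak{u}^+$ we have $y^\dagger = y$, so the skew condition gives $\eta(y) = -\eta(y^\dagger) = -\eta(y)$, hence $2\eta(y) = 0$ and therefore $\eta(y) = 0$. So $\eta$ vanishes on $\mathfrak{u}^+$, and since $\eta|_\mathfrak{u} = \lambda$ is prescribed, $\eta$ is determined on every element of $\mathfrak{g} = \mathfrak{u} \oplus \mathfrak{u}^+$.

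For existence, define $\eta \in \mathfrak{g}^*$ by the formula
\[
    \eta(x) = \lambda\!\left(\tfrac{1}{2}(x - x^\dagger)\right).
\]
This is $\mathbb{F}_q$-linear because $\dagger$ is linear and $\lambda$ is linear. If $x \in \mathfrak{u}$, then $x^\dagger = -x$ and so $\tfrac{1}{2}(x - x^\dagger) = x$, giving $\eta(x) = \lambda(x)$; thus $\eta|_\mathfrak{u} = \lambda$. For the skew condition, a direct computation gives
\[
    \eta(x^\dagger) = \lambda\!\left(\tfrac{1}{2}(x^\dagger - x)\right) = -\lambda\!\left(\tfrac{1}{2}(x - x^\dagger)\right) = -\eta(x),
\]
using $(x^\dagger)^\dagger = x$ since $\dagger$ is an involution.

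There is no real obstacle here: the only subtlety is remembering that $q$ odd is exactly what makes the averaging operators $\tfrac{1}{2}(1 \pm \dagger)$ well-defined projections onto $\mathfrak{u}$ and $\mathfrak{u}^+$, which is where the hypothesis on the characteristic is used.
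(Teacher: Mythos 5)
Your proof is correct and follows essentially the same approach as the paper: both use the explicit formula $\eta(x) = \tfrac{1}{2}\lambda(x - x^\dagger)$ for existence, and both obtain uniqueness from the observation that the skew condition forces $\eta$ to be determined by its restriction to $\frak{u}$. The only cosmetic difference is that you make the eigenspace decomposition $\frak{g} = \frak{u} \oplus \frak{u}^+$ explicit, whereas the paper phrases the uniqueness step directly as $\mu(x - x^\dagger) = 2\mu(x)$.
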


\begin{proof} For $x \in \frak{g}$, let $\eta(x) = \frac{1}{2}\lambda(x-x^\dagger)$. This definition makes sense as $x-x^\dagger \in \frak{u}$ for all $x \in \frak{g}$. Note that $\eta \in \frak{g}^*$ and $\eta(x) = \lambda(x)$ for all $x \in \frak{u}$. It is also clear that $\eta(x) = -\eta(x^\dagger)$ for all $x \in \frak{g}$.

\bigbreak

The uniqueness of $\eta$ follows from the fact that $\mu(x-x^\dagger) = 2\mu(x)$ for any $\mu$ satisfying $\mu(x) = -\mu(x^\dagger)$. This means that such $\mu$ is determined only by its values on $\frak{u}$, hence $\eta$ is unique.
\end{proof}

\begin{lem}\label{setequal}
    Let $\lambda \in \frak{u}^*$, and let $\eta \in \frak{g}^*$
be the extension of $\lambda$ described above. Then the sets $\{\mu|_\frak{u} \mid \mu \in H\eta \}$ and $H \cdot \lambda$ are equal.
\end{lem}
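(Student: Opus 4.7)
The plan is to realize both sides as the same affine subspace of $\frak{u}^*$ by writing each in the form $\lambda + V$ for a common subspace $V \subseteq \frak{u}^*$. The key parameterization is that as $h$ ranges over $H = 1 + \frak{h}$, the element $z := h^{-1} - 1$ ranges over all of $\frak{h}$, since inversion is a bijection of $H$.

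First I would describe $H \cdot \lambda$. By Lemma~\ref{actionscommute} applied to $h^{-1} \in H$ and $x \in \frak{u}$, we have $h^{-1} \cdot x = z * x + x$, and hence
\[
(h \cdot \lambda)(x) \;=\; \lambda(h^{-1} \cdot x) \;=\; \lambda(x) + \lambda(z * x).
\]
A short check using $x^\dagger = -x$ and $\dagger^2 = \mathrm{id}$ shows $z * x = zx + xz^\dagger \in \frak{u}$, so $\lambda$ is being applied to elements of its domain. This identifies
\[
H \cdot \lambda \;=\; \lambda + V, \qquad V \;:=\; \{\, x \mapsto \lambda(z * x) \,:\, z \in \frak{h} \,\} \;\subseteq\; \frak{u}^*,
\]
an $\mathbb{F}_q$-subspace. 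Next I would compute the restriction of $H\eta$ to $\frak{u}$: writing $(h\eta)(x) = \eta((1+z)x) = \eta(x) + \eta(zx)$ and using the defining formula $\eta(w) = \tfrac{1}{2}\lambda(w - w^\dagger)$ together with $x^\dagger = -x$ gives $\eta(zx) = \tfrac{1}{2}\lambda(zx + xz^\dagger) = \tfrac{1}{2}\lambda(z * x)$. Since $\eta|_\frak{u} = \lambda$, this yields $\{\mu|_\frak{u} : \mu \in H\eta\} = \lambda + \tfrac{1}{2} V$.

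Finally, since $q$ is odd, $\tfrac{1}{2} \in \mathbb{F}_q^\times$, so $\tfrac{1}{2} V = V$ as subspaces, and the two affine subspaces coincide. The only real obstacle is bookkeeping: carefully distinguishing when $\lambda$ (defined only on $\frak{u}$) versus $\eta$ (defined on all of $\frak{g}$) is being applied, and verifying at each step that intermediate expressions such as $z * x$ actually lie in $\frak{u}$ so the evaluations of $\lambda$ make sense. Once that is clean, the argument reduces to noting that multiplication by $\tfrac{1}{2}$ is invertible in odd characteristic and therefore preserves the subspace $V$.
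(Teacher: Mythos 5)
Your proof is correct, and the ingredients are the same as the paper's (Lemma~\ref{actionscommute}, the identity $\eta(y)=-\eta(y^\dagger)$, and odd characteristic), but the packaging differs in a way worth noting. The paper computes, for each $h\in H$ and $x\in\frak{u}$, that $(h^{-1}\cdot\lambda)(x) = ((2h-1)^{-1}\eta)(x)$, and then observes that $h\mapsto 2h-1$ is a bijection of $H$ to itself; the factor of $\tfrac12$ is absorbed directly into this change of variables. You instead realize both sides as parallel affine subspaces $\lambda + V$ and $\lambda + \tfrac12 V$ of $\frak{u}^*$, where $V = \{\,x\mapsto\lambda(z*x) : z\in\frak{h}\,\}$, and then invoke $\tfrac12 V = V$ since $2\in\mathbb{F}_q^\times$. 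These are two views of the same underlying fact: the paper's matching is elementwise via a clever bijection of $H$, while yours is a set-level identity of cosets of a subspace, closer in spirit to the proof of Lemma~\ref{hlambda}. Your version avoids having to spot the map $h\mapsto 2h-1$, at the modest cost of verifying explicitly that $V$ is a subspace (which it is, since $z\mapsto(x\mapsto\lambda(z*x))$ is linear). Both are complete; your one small bookkeeping obligation, which you did check, is that $z*x\in\frak{u}$ so that $\lambda(z*x)$ is meaningful.
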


\begin{proof}
Let $h \in H$ and $x \in \frak{u}$; then
\begin{align*}
        h^{-1} \cdot \lambda (x) & = \lambda (h \cdot x) \\
        & = \eta ((h-1) * x +x)
\end{align*}
by Lemma~\ref{actionscommute}. Note that
\begin{align*}
        \eta ((h-1) * x +x)
        & = \eta ((h-1)x+x(h-1)^\dagger) + \eta(x) \\
        & = 2\eta((h-1)x) + \eta(x)
\end{align*}
by the fact that $\eta(y) = -\eta(y^\dagger)$ for all $y \in \frak{g}$. Finally,
\begin{align*}
        2\eta((h-1)x) + \eta(x)
        & = \eta((2(h-1)+1)x) \\
        & = (2h-1)^{-1}\eta(x).
\end{align*}
The claim follows from the fact that the map $h \mapsto (2h-1)$ is a bijection from $H$ to $H$.
\end{proof}

We are now ready to prove that for $\lambda \in \frak{u}^*$ the function
\[
        \chi_\lambda = \frac{|H \cdot \lambda|}{|G \cdot \lambda|}\sum_{\mu \in G \cdot \lambda} \theta \circ \mu \circ f
\]
is a character of $U$. Let $\eta$ be the element of $\frak{g}^*$ associated to $\lambda$ as above, and define $U_\lambda = U \cap G_\eta$ and $\frak{u}_\lambda = f(U_\lambda)$.

\begin{thm}\label{ucharacters} We have that
\[
       \chi_\lambda = \textup{Ind}_{U_\lambda}^U(\textup{Res}_{U_\lambda}^U (\theta \circ \lambda \circ f)).
\]

\end{thm}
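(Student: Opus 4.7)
The plan is to prove this result by mimicking the proof of Theorem~\ref{chara}, with the two-sided orbit argument replaced by a one-sided argument that exploits the factorization $G = HU$. First I would verify the hypotheses of Lemma~\ref{induced} for the group $U$, vector space $\frak{u}$, bijection $f\colon U\to\frak{u}$, subgroup $U_\lambda$, and action $u\cdot x = uxu^\dagger = uxu^{-1}$ of $U$ on $\frak{u}$. Here $f(U_\lambda) = f(U\cap G_\eta) = \frak{u}\cap \frak{g}_\eta = \frak{u}_\lambda$ is a subspace (because both $f(U) = \frak{u}$ and $f(G_\eta) = \frak{g}_\eta$ by property~(2) of a Springer morphism), and the compatibility $f(uvu^{-1}) = u\cdot f(v)$ for $u,v\in U$ also follows from property~(2). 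By Corollary~\ref{linchar}(a), $\textup{Res}_{G_\eta}^G(\theta\circ\eta\circ f)$ is a linear character of $G_\eta$, and since $\lambda = \eta|_\frak{u}$ agrees with $\eta$ on $\frak{u}_\lambda$, its further restriction to $U_\lambda$ coincides with $\textup{Res}_{U_\lambda}^U(\theta\circ\lambda\circ f)$, which is therefore a (linear, hence class) function. Lemma~\ref{induced} then yields
\[
\textup{Ind}_{U_\lambda}^U(\textup{Res}_{U_\lambda}^U(\theta\circ\lambda\circ f)) = \frac{1}{|U|}\sum_{u\in U}\sum_{\substack{\mu\in \frak{u}^* \\ \mu|_{\frak{u}_\lambda} = \lambda|_{\frak{u}_\lambda}}} \theta\circ (u\cdot \mu)\circ f.
\]

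The crux of the argument is the set identification
\[
\{\mu\in\frak{u}^* : \mu|_{\frak{u}_\lambda} = \lambda|_{\frak{u}_\lambda}\} = H\cdot\lambda.
\]
The first step toward this is to show $\frak{u}_\lambda = \frak{u}\cap \frak{l}_\eta$: for $x\in\frak{u}$ and $y\in\frak{h}$, using $x^\dagger = -x$ and $\eta(z) = -\eta(z^\dagger)$, one computes $\eta(yx) = -\eta((yx)^\dagger) = -\eta(x^\dagger y^\dagger) = \eta(xy^\dagger)$. Since $y\mapsto y^\dagger$ bijects $\frak{h}$ with $\frak{h}^\dagger$, this gives $\frak{u}\cap\frak{l}_\eta = \frak{u}\cap \frak{r}_\eta$, hence both equal $\frak{u}_\lambda$. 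Combining Lemmas~\ref{hlambda} and~\ref{setequal}, $H\cdot\lambda = \{\mu|_\frak{u} : \mu\in\frak{g}^*,\ \mu|_{\frak{l}_\eta} = \eta|_{\frak{l}_\eta}\}$. The inclusion $\subseteq$ is immediate from restricting such a $\mu$ to $\frak{u}_\lambda\subseteq\frak{l}_\eta$. For $\supseteq$, given any $\nu\in\frak{u}^*$ agreeing with $\lambda$ on $\frak{u}_\lambda = \frak{u}\cap\frak{l}_\eta$, the partial definitions $\nu$ on $\frak{u}$ and $\eta$ on $\frak{l}_\eta$ agree on $\frak{u}\cap\frak{l}_\eta$, so they combine to a linear functional on $\frak{u} + \frak{l}_\eta$ that extends arbitrarily to an element of $\frak{g}^*$ restricting to $\nu$ on $\frak{u}$.

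Given the identification, the computation closes quickly: substituting $\mu = h\cdot\lambda$ for $h\in H$ and using the fact that the product map $U\times H\to G$ is $|U\cap H|$-to-one (since $|U||H|/|U\cap H| = |UH| = |G|$) converts the double sum into a sum over $G$:
\begin{align*}
\frac{1}{|U|}\sum_{u\in U}\sum_{\mu\in H\cdot\lambda}\theta\circ (u\cdot\mu)\circ f
&= \frac{|H\cdot\lambda||U\cap H|}{|U||H|}\sum_{g\in G}\theta\circ (g\cdot\lambda)\circ f \\
&= \frac{|H\cdot\lambda|}{|G\cdot\lambda|}\sum_{\mu\in G\cdot\lambda}\theta\circ\mu\circ f = \chi_\lambda.
\end{align*}
The main obstacle is the set equality above: while the $\subseteq$ direction is essentially Lemma~\ref{setequal} combined with $\frak{u}_\lambda = \frak{u}\cap\frak{l}_\eta$, verifying that identity requires the $\dagger$-symmetric manipulation of $\eta$, and the $\supseteq$ direction relies on the linear-algebra extension argument that works precisely because of the symmetry between $\frak{u}$ and $\frak{l}_\eta$ inside $\frak{g}$. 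Everything else is bookkeeping parallel to the algebra-group case, and together with part~(a) having been reduced to this identity this completes the proof of Theorem~\ref{sctofu}.
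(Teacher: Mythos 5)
Your proof takes essentially the same route as the paper: reduce to Lemma~\ref{induced}, show $\frak{u}_\lambda = \frak{u}\cap\frak{l}_\eta$ via the $\dagger$-symmetry of $\eta$, identify $\{\mu\in\frak{u}^*:\mu|_{\frak{u}_\lambda}=\lambda|_{\frak{u}_\lambda}\}$ with $H\cdot\lambda$ via Lemmas~\ref{hlambda} and~\ref{setequal}, and convert the resulting double sum over $U\times H$ into a sum over $G=HU$. You spell out several steps the paper leaves terse — in particular the extension argument for the $\supseteq$ direction of the set identification (gluing $\nu$ on $\frak{u}$ with $\eta$ on $\frak{l}_\eta$) and the $|U\cap H|$-to-one fiber count in the final substitution — but the underlying argument is identical, so this is a correct and complete filling-out of the paper's own proof.
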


\begin{proof} Note that $\text{Res}_{U_\lambda}^U (\theta \circ \lambda \circ f) = \text{Res}_{U_\lambda}^G (\theta \circ \eta \circ f)$; as $U_\lambda \subseteq G_\eta$, by Corollary~\ref{linchar} the function $\text{Res}_{U_\lambda}^U (\theta \circ \lambda \circ f)$ is a linear character of $U_\lambda$.

\bigbreak

It is clear that $\frak{u}_\lambda = \frak{u} \cap \frak{g}_\eta$, but in fact $\frak{u}_\lambda = \frak{u} \cap \frak{l}_\eta$. This is a consequence of the fact that if $x \in \frak{u}$ and $x \in \frak{l}_\eta$, then $x \in \frak{r}_\eta$. It follows that
\begin{align*}
        \{\mu \in \frak{u}^* \mid \mu(x) = \lambda(x) \text{ for all } x \in \frak{u}_\lambda\}
        & = \{\kappa|_\frak{u} \mid \kappa(x) = \eta(x) \text{ for all } x \in \frak{l}_\eta\} \\
        & = \{\kappa|_\frak{u} \mid \kappa \in H\eta\} \\
        & = H \cdot \lambda
\end{align*}
by Lemma~\ref{hlambda} and Lemma~\ref{setequal}. By Lemma~\ref{induced},
\begin{align*}
        \textup{Ind}_{U_\lambda}^U(\textup{Res}_{U_\lambda}^U (\theta \circ \lambda \circ f))
        & = \frac{1}{|U|}\sum_{u \in U} \sum_{\substack{\mu \in \frak{u}^* \\ \mu|_{\frak{u}_\lambda} = \lambda|_{\frak{u}_\lambda}}} \theta \circ u \mu u^{-1} \circ f \\
        & = \frac{1}{|U|}\sum_{u \in U} \sum_{\mu \in H \cdot \lambda} \theta \circ u \mu u^{-1} \circ f \\
        & = \frac{|H \cdot \lambda|}{|H||U|}\sum_{u \in U} \sum_{h \in H} \theta \circ u (h \cdot \lambda) u^{-1} \circ f.
\end{align*}
Recall that $u \cdot x = u xu^{-1}$ for all $x \in \frak{g}$ and $HU = G$. It follows that
\begin{align*}
        \frac{|H \cdot \lambda|}{|H||U|}\sum_{u \in U} \sum_{h \in H} \theta \circ u (h \cdot \lambda)u^{-1} \circ f
        & = \frac{|H \cdot \lambda|}{|H||U|}\sum_{u \in U} \sum_{h \in H} \theta \circ  (uh \cdot \lambda) \circ f \\
        & = \frac{|H \cdot \lambda|}{|G|}\sum_{g \in G} \theta \circ  (g \cdot \lambda) \circ f \\
        & = \frac{|H \cdot \lambda|}{|G \cdot \lambda|}\sum_{\mu \in G \cdot \lambda} \theta \circ  \mu \circ f
\end{align*}
which is by definition $\chi_\lambda$.
\end{proof}
It is worth noting that the function $\text{Res}_{U_\lambda}^U(\theta \circ \lambda \circ f)$ is independent of the choice of Springer morphism $f$, and as such the $\chi_\lambda$ do not depend on $f$. We also have a connection between the supercharacter theory of $U$ and the supercharacter theory of the algebra group $G$.

\begin{thm}\label{superclassintersect}
    The superclasses of $U$ are exactly the sets of the form $U \cap K_g$, where $K_g$ is some superclass of $G$.
\end{thm}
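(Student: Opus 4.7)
The plan is to show that for each $u \in U$ the $U$-superclass $K_u$ coincides with $U \cap K_u^G$, where $K_u^G$ denotes the $G$-superclass of $u$ in the algebra group supercharacter theory. Since both $\{K_u : u \in U\}$ and $\{U \cap K_g^G\}$ partition $U$, verifying this equality for each individual $u$ suffices. The forward containment is immediate: if $v \in K_u$ then $f(v) = g f(u) g^\dagger$ for some $g \in G$; since $g^\dagger \in G$, this places $f(v)$ in the $G \times G$-orbit $G f(u) G$. A routine computation using $f(1+x) = x\phi(x)$ with $\phi(x) \in G$ then converts this into $v-1 \in G(u-1)G$, so $v \in K_u^G$.

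The reverse containment is where the work lies. Suppose $v \in U$ and $f(v) = g_1 f(u) g_2$ for some $g_1, g_2 \in G$; I must produce a single $g \in G$ with $f(v) = g f(u) g^\dagger$. The idea is to use the involution $\dagger$ to reduce from two-sided to one-sided multiplication. Applying $\dagger$ to the equation and using $f(u)^\dagger = -f(u)$ and $f(v)^\dagger = -f(v)$, I get $f(v) = g_2^\dagger f(u) g_1^\dagger$; setting $m = g_2^{-\dagger} g_1 \in G$ and rearranging yields $m f(u) = f(u) m^\dagger$, and in particular $mf(u) \in \frak{u}$. Hence it suffices to prove the following, which is the essential content of the Remark in Section~\ref{mainthm}: if $x \in \frak{u}$, $m \in G$, and $mx \in \frak{u}$, then there exists $k \in G$ with $mx = kxk^\dagger$.

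For this one-sided claim, write $w = m - 1 \in \frak{g}$. The hypothesis $mx \in \frak{u}$ rearranges to the commutation $wx = xw^\dagger$, which by iteration yields $w^n x = x(w^\dagger)^n$ for every $n \geq 0$. I would then take the explicit square root
\[
        k = (1+w)^{1/2} = \sum_{n \geq 0} \binom{1/2}{n} w^n,
\]
which is a finite sum (since $w$ is nilpotent) with coefficients in $\mathbb{F}_q$ (since $q$ is odd). Then $k \in G$, and because $\dagger$ is an antiautomorphism, $k^\dagger = (1+w^\dagger)^{1/2}$. The commutation relation gives $kx = xk^\dagger$, whence
\[
        kxk^\dagger = x(k^\dagger)^2 = x(1 + w^\dagger) = x + wx = mx,
\]
as required. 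Tracing back through the reduction, $g = g_2^\dagger k$ satisfies $f(v) = g f(u) g^\dagger$.

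The main obstacle is identifying the correct $k$ in the one-sided lemma; once one notices that the commutation $wx = xw^\dagger$ allows polynomials in $w$ to be pulled past $x$ at the cost of replacing them by polynomials in $w^\dagger$, the choice $k = m^{1/2}$ is natural and the remainder is bookkeeping. A minor technical point worth recording is that the $G$-superclasses in Section~\ref{alggpsct} are defined via $g \mapsto g-1$ rather than via a Springer morphism $f$, but one verifies directly that $(h-1) \in G(g-1)G$ if and only if $f(h) \in Gf(g)G$, using that extra factors of $\phi$ can be absorbed into the $G$-elements on either side.
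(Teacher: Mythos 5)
Your proof is correct and follows essentially the same route as the paper's: both arguments reduce the two-sided $G\times G$ problem to the one-sided lemma that if $x,mx\in\frak{u}$ then $mx=kxk^\dagger$ for some $k\in G$ (which is exactly the content of the Remark in Section~\ref{mainthm}), observe that the hypothesis yields $mx = xm^\dagger$, and then take $k$ to be a square root of $m$. The only substantive difference is the choice of square root: the paper takes $k=m^j$ for an integer $j$ with $m^{2j}=m$ (available since $m$ has odd order), while you use the binomial series $(1+w)^{1/2}$ with $w=m-1$, which is equally valid since $w$ is nilpotent and the coefficients $\binom{1/2}{n}$ have only powers of $2$ in their denominators, hence lie in $\mathbb{F}_q$ for $q$ odd; this is a cosmetic rather than conceptual difference.
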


\begin{proof} Note that for each $u \in U$ there exists $g \in G$ such that $f(u) = (u-1)g$. It follows that each superclass of $U$ is contained in some superclass of $G$. We want to show that each superclass of $G$ contains at most one superclass of $U$.

\bigbreak

Note that for $g,h \in G$ and $x \in \frak{g}$, $gxh = h^\dagger(h^{-\dagger}gx)h$. As such, it suffices to show that if $x \in \frak{u}$ and $gx \in \frak{u}$ for some $g \in G$, then $gx = hxh^\dagger$ for some $h \in G$.

\bigbreak

Assume that $x,gx \in \frak{u}$; then
\[
    gx  = -(gx)^\dagger
     = -x^\dagger g^\dagger
     = xg^\dagger.
\]
Let $k$ be an odd integer such that $g^{2k} = g$ (such $k$ must exist as $g$ has odd order). Then
\[
    gx  = g^{2k}x
     = g^k x (g^k)^\dagger;
\]
let $h = g^k$.
\end{proof}

In Section 3 of \cite{MR2264135}, Andr\'e--Neto show that their supercharacter theories of the unipotent orthogonal and symplectic groups have superclasses of the form $U \cap K_g$ as well. This demonstrates that our supercharacter theory coincides with theirs if $U = UO_n(\mathbb{F}_q)$ or $USp_n(\mathbb{F}_q)$.

\section{Supercharacter theories of the unipotent unitary groups}\label{unitary}

Let $q$ be a power of an odd prime, and for $x \in \frak{gl}_n(\mathbb{F}_{q^2})$, define $\overline{x}$ by $(\overline{x})_{ij} = (x_{ij})^q$. Let
\begin{align*}
        U_n(\mathbb{F}_{q^2}) &= \{g \in GL_n(\mathbb{F}_{q^2}) \mid g^{-1} = J\overline{g}^tJ \} \text{ and} \\
        \frak{u}_n(\mathbb{F}_{q^2}) &= \{x \in \frak{gl}_n(\mathbb{F}_{q^2}) \mid -x = J\overline{x}^tJ \},
\end{align*}
and let
\begin{align*}
        UU_n(\mathbb{F}_{q^2}) &= U_n(\mathbb{F}_{q^2}) \cap UT_n(\mathbb{F}_{q^2}) \text{ and} \\
        \frak{uu}_n(\mathbb{F}_{q^2}) &= \frak{u}_n(\mathbb{F}_{q^2}) \cap \frak{ut}_n(\mathbb{F}_{q^2}).
\end{align*}
The group $U_n(\mathbb{F}_{q^2})$ is the group of unitary $n \times n$ matrices over $\mathbb{F}_{q^2}$. In this section we construct a supercharacter theory of $UU_n(\mathbb{F}_{q^2})$ using the results from Section~\ref{mainresultproof} and calculate the values of the supercharacters on the superclasses.

\subsection{Construction}

The map $x^\dagger = J\overline{x}^tJ$ defines an antiautomorphism of $\frak{ut}_n(\mathbb{F}_{q^2})$ if we consider $\frak{ut}_n(\mathbb{F}_{q^2})$ as an $\mathbb{F}_q$-algebra. This involution satisfies the conditions required by Theorem~\ref{sctofu}, and futhermore
\begin{align*}
        UU_n(\mathbb{F}_{q^2}) &= \{g \in UT_n(\mathbb{F}_{q^2}) \mid g^{-1} = g^\dagger \} \text{ and} \\
        \frak{uu}_n(\mathbb{F}_{q^2}) &= \{x \in \frak{ut}_n(\mathbb{F}_{q^2}) \mid -x = x^\dagger \}.
\end{align*}

Define $K_u$ and $\chi_\lambda$ as in \ref{eqku} and \ref{eqchilambda} with $U = UU_n(\mathbb{F}_{q^2})$ and $\frak{u} = \frak{uu}_n(\mathbb{F}_{q^2})$. By Theorem~\ref{sctofu}, there is a supercharacter theory of $UU_n(\mathbb{F}_{q^2})$ with superclasses $\{K_u\}$ and supercharacters $\{\chi_\lambda\}$.

\bigbreak

As with the orthogonal and symplectic cases, by Theorem~\ref{superclassintersect} the superclasses are of the form $K_g \cap UU_n(\mathbb{F}_{q^2})$ where $K_g$ is a superclass of $UT_n(\mathbb{F}_{q^2})$ under the algebra group supercharacter theory.

\bigbreak

We can once again construct supercharacter theories of certain subgroups of $UU_n(\mathbb{F}_{q^2})$. The antiautomorphism $\dagger$ as defined above restricts to an antiautomorphism of $U_\mathcal{P}$ for any mirror poset. Furthermore,
\begin{align*}
        UU_{n}(\mathbb{F}_{q^2})\cap U_\mathcal{P} &= \{g \in U_\mathcal{P} \mid g^{-1} = g^\dagger\} \text{ and} \\
        \frak{uu}_{n}(\mathbb{F}_{q^2})\cap \frak{u}_\mathcal{P} &= \{x \in \frak{u}_\mathcal{P} \mid -x = x^\dagger\}.
\end{align*}

Define $K_u$ and $\chi_\lambda$ as in \ref{eqku} and \ref{eqchilambda} with $U = UU_n(\mathbb{F}_{q^2})\cap U_\mathcal{P}$ and $\frak{u} = \frak{uu}_n(\mathbb{F}_{q^2})\cap \frak{u}_\mathcal{P}$. By Theorem~\ref{sctofu}, there is a supercharacter theory of $UU_n(\mathbb{F}_{q^2})\cap U_\mathcal{P}$ with superclasses $\{K_u\}$ and supercharacters $\{\chi_\lambda\}$. By Theorem~\ref{superclassintersect}, the superclasses are of the form $K_g \cap UU_n(\mathbb{F}_{q^2})$ where $K_g$ is a superclass of $U_\mathcal{P}$ in the algebra group supercharacter theory.

\subsection{Superclasses and supercharacters}

In this section we describe the superclasses and supercharacters of $U=UU_n(\mathbb{F}_{q^2})$ in terms of labeled set partitions. Recall that, for $1 \leq i \leq n$, we define $\bar{i} = n+1-i$. A \textit{twisted} $\mathbb{F}_{q}$\textit{-set partition} will refer to an $\mathbb{F}_{q^2}$-set partition $\eta$ of $[n]$ such that if $i \overset{a}{\frown} j \in \eta$ then $\bar{j} \overset{-a^q}{\frown} \bar{i} \in \eta$. In particular, if $i \overset{a}{\frown} \bar{i} \in \eta$, then $a$ satisfies $a^q+a = 0$. For more on labeled set partitions, see \cite{MR2880659}.

\begin{lem}\label{lemscr} Each superclass of $U$ contains exactly one element $u$ with the property that $f(u)$ has at most one nonzero entry in each row and column.
\end{lem}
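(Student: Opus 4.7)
The plan is to reduce to the analogous canonical-form statement for the algebra group $G = UT_n(\mathbb{F}_{q^2})$ and transfer it via Theorem~\ref{superclassintersect}. Recall that for the Diaconis--Isaacs supercharacter theory of $G$, each superclass is a $G \times G$-orbit on $\frak{g}$ under $(g_1,g_2)\cdot x = g_1 x g_2^{-1}$, and it is standard (from the labeled $\mathbb{F}_{q^2}$-set partition parametrization) that each such orbit contains exactly one element with at most one nonzero entry in each row and column. I will call such a matrix \emph{canonical}.

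For uniqueness, suppose $u, v \in U$ lie in the same $U$-superclass with $f(u)$ and $f(v)$ both canonical. Since $g \cdot x = gxg^\dagger$ can be rewritten as $(g, g^{-\dagger}) \cdot x$ for the algebra-group action, we have $G \cdot f(u) \subseteq G f(u) G$, so $f(u)$ and $f(v)$ lie in a common $G \times G$-orbit. Uniqueness of the canonical form for $G$ forces $f(u) = f(v)$, whence $u = v$ by injectivity of $f$.

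For existence, set $y = f(u) \in \frak{u}$ and let $y_0$ be the unique canonical element of $GyG$. I first claim $y_0 \in \frak{u}$. Writing $y_0 = g_1 y g_2^{-1}$, one computes $-y_0^\dagger = g_2^{-\dagger} y g_1^\dagger \in GyG$ using $y^\dagger = -y$; moreover, because $\dagger$ reflects entries across the antidiagonal up to a scalar, $-y_0^\dagger$ is again canonical. Uniqueness thus forces $-y_0^\dagger = y_0$, so $y_0 \in \frak{u}$. I now upgrade $y_0 \in GyG$ to $y_0 \in G \cdot y$: the identity
\[
    y_0 = g_1 y g_2^{-1} = g_2^{-\dagger}\bigl((g_2^\dagger g_1) y\bigr) g_2^{-1}
\]
exhibits $x' := (g_2^\dagger g_1) y = g_2^\dagger \cdot y_0$, which lies in $\frak{u}$ because $y_0 \in \frak{u}$ and the $G$-action preserves $\frak{u}$. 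Setting $g' = g_2^\dagger g_1$, we have $y, g'y \in \frak{u}$, so the argument from the proof of Theorem~\ref{superclassintersect} (pick an odd $k$ with $(g')^{2k} = g'$, available because $G$ is a $p$-group with $p$ odd) gives $x' = (g')^k y ((g')^k)^\dagger = (g')^k \cdot y$. Hence $y_0 = g_2^{-\dagger} \cdot x' = (g_2^{-\dagger}(g')^k) \cdot y \in G \cdot y$, and $v := f^{-1}(y_0) \in U$ lies in the $U$-superclass of $u$ with $f(v) = y_0$ canonical.

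The main obstacle is the existence step, specifically showing $y_0 \in \frak{u}$: this is where the hypothesis that $\dagger$ reflects entries across the antidiagonal is essential, because it guarantees the set of canonical matrices is stable under $x \mapsto -x^\dagger$, so that uniqueness of the algebra-group canonical form delivers $-y_0^\dagger = y_0$. Everything else reduces to bookkeeping together with the odd-order square-root trick already used in Theorem~\ref{superclassintersect}.
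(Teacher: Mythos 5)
Your proof is correct, but it goes by a genuinely different route than the paper. The paper handles existence by an explicit row-reduction entirely inside $\frak{u}$: at each step it finds a well-chosen $1 - c\,e_{ki} \in G$ (with a slight modification when $k = \bar{j}$) whose $\dagger$-twisted action zeroes out the offending entry of $x$ while producing another element of $\frak{u}$, and iterates until the canonical form is reached. You instead take the canonical element $y_0$ of the two-sided orbit $G\,f(u)\,G$, observe that the set of canonical matrices is stable under $x \mapsto -x^\dagger$ (since $\dagger$ reflects entries across the antidiagonal up to scalars), and use the uniqueness of the algebra-group canonical form to force $-y_0^\dagger = y_0$, hence $y_0 \in \frak{u}$; you then upgrade $y_0 \in Gf(u)G$ to $y_0 \in G \cdot f(u)$ via the odd-order square-root trick from the proof of Theorem~\ref{superclassintersect}. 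Both approaches share the uniqueness argument. Yours is less computational and exposes the structural reason the lemma holds (canonical forms are $\dagger$-equivariant); the paper's is more elementary and self-contained, and gives an algorithm for producing the representative. You could further streamline your existence step by citing Theorem~\ref{superclassintersect} directly: once $y_0 \in \frak{u}$ is established, $f^{-1}(y_0)$ lies in $U \cap K_u$, which by that theorem is exactly the $U$-superclass of $u$; there is no need to re-derive the odd-power argument inline.
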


\begin{proof} The superclasses of $UT_n(\mathbb{F}_{q^2})$ contain exactly one element $u$ such that $f(u)$ has at most one nonzero entry in each row and column. It follows that the superclasses of $U$ contain at most one element with this property. Let $x \in \frak{u}$; we want to row-reduce $x$ using the action of $UT_n(\mathbb{F}_{q^2})$.

\bigbreak

Let $(i,j)$ be such that
\begin{enumerate}
\item $x_{ij} \neq 0$,
\item there exists $k<i$ with $x_{kj} \neq 0$, and
\item there is no other pair $(l,m)$ satisfying properties (1) and (2) with $l\geq i$ and $m\leq j$.
\end{enumerate}

If no such $(i,j)$ exists, then $x$ has at most one nonzero entry in each row and column. Assume that such a pair $(i,j)$ exists. If $k \neq \bar{j}$, we consider
\[
        y = \bigg(1-\frac{x_{kj}}{x_{ij}}e_{ki}\bigg)\cdot x.
\]
If $k = \bar{j}$, we consider
\[
        y = \bigg(1-\frac{x_{kj}}{x_{ij}+x_{ij}^q}e_{ki}\bigg)\cdot x.
\]
The element $y$ is in the same superclass as $x$, but has $y_{kj}=0$. Repeated application of this process will yield an element with at most one nonzero entry in each row and column.
\end{proof}

To each twisted $\mathbb{F}_q$-set partition $\eta$ we assign the element $x_\eta \in \frak{u}$ defined by
\[
        (x_\eta)_{ij} = \left\{\begin{array}{ll}
        a & \quad \text{if } i \overset{a}{\frown} j \in \eta \\
        0 & \text{else.}\end{array}\right.
\]
and the element $u_\eta \in U$ such that $f(u_\eta) = x_\eta$. Note that $x_\eta$ is in fact an element of $\frak{u}$ and has at most one entry in each nonzero row and column.

\begin{cor}\label{sclindexing} The elements
\[
        \{u_\eta \mid \eta \text{ is a twisted }\mathbb{F}_q\text{-partition}\}
\]
are a set of superclass representatives.
\end{cor}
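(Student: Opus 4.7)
My plan is to apply Lemma~\ref{lemscr}, which says that every superclass of $U$ contains exactly one element $u$ whose image $f(u)$ has at most one nonzero entry in each row and column. It will then suffice to exhibit a bijection between twisted $\mathbb{F}_q$-set partitions of $[n]$ and the set
\[
X = \{x \in \frak{u} \mid x \text{ has at most one nonzero entry in each row and column}\},
\]
and to verify that $\eta \mapsto x_\eta$ (as defined just before the corollary) realizes this bijection; the required representatives will then be $u_\eta = f^{-1}(x_\eta)$.

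First I would check that $x_\eta \in \frak{u}$, i.e.\ that $x_\eta^\dagger = -x_\eta$, entry by entry. Since the anti-involution $x \mapsto J\overline{x}^t J$ sends the $(i,j)$ entry of $x$ to the $(\bar j,\bar i)$ entry of $x^\dagger$ with a Frobenius twist, the condition $x^\dagger=-x$ is equivalent to $x_{\bar j \bar i} = -x_{ij}^q$. This is precisely the twisting condition $i\overset{a}{\frown} j\in\eta\ \Rightarrow\ \bar j\overset{-a^q}{\frown}\bar i\in\eta$ from the definition of a twisted $\mathbb{F}_q$-set partition, so $x_\eta^\dagger = -x_\eta$. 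The at-most-one-per-row-and-column property for $x_\eta$ is immediate from the set-partition axioms.

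Next I would write down the inverse. Given $x\in X$, set
\[
\eta(x) = \{i\overset{x_{ij}}{\frown} j \mid i<j,\ x_{ij}\neq 0\}.
\]
The row and column condition on $x$ ensures that each vertex of $[n]$ appears at most once on each side of an arc, so $\eta(x)$ is a well-defined $\mathbb{F}_{q^2}$-labeled set partition of $[n]$, and the identity $x^\dagger=-x$ supplies exactly the twisting relation, so $\eta(x)$ is twisted. The maps $\eta\mapsto x_\eta$ and $x\mapsto\eta(x)$ are patently mutual inverses.

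The only point I expect to isolate is the anti-diagonal case $j=\bar i$: such an arc $i\overset{a}{\frown}\bar i$ is its own twisting partner, and the twisting condition degenerates to the scalar identity $a^q+a=0$, which matches the self-symmetry constraint $x_{i\bar i} = -x_{i\bar i}^q$ forced by $x^\dagger=-x$. Apart from recording this degeneracy carefully, the argument is a routine unpacking of definitions.
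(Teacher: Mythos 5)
Your proposal is correct and takes essentially the same route as the paper: both invoke Lemma~\ref{lemscr} to reduce the problem to exhibiting a bijection between twisted $\mathbb{F}_q$-set partitions and elements of $\frak{u}$ with at most one nonzero entry per row and column, realized by $\eta \mapsto x_\eta$ with the obvious inverse. You merely spell out the verification (including the anti-diagonal arc $i \overset{a}{\frown} \bar i$ and the constraint $a^q + a = 0$) more explicitly than the paper, which leaves these as ``apparent.''
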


\begin{proof} As mentioned above, $x_\eta$ is an element of $\frak{u}$ and has at most one entry in each nonzero row and column. Conversely, given $x \in \frak{u}$ with at most one entry in each nonzero row and column, define $\eta = \{i \overset{x_{ij}}{\frown} j \mid x_{ij} \neq 0\}$. It is apparent that $x = x_\eta$.
\end{proof}

As there are equal numbers of superclasses and supercharacters, the supercharacters can also be parametrized by twisted $\mathbb{F}_q$-set partitions. Given a twisted $\mathbb{F}_q$-set partition, define $\lambda_\eta \in \frak{u}^*$ by
\[
        \lambda_\eta(x) = \sum_{i \overset{a}{\frown} j \in \eta} ax_{ij}.
\]
\begin{lem}\label{lemor} The set
\[
        \{\lambda_\eta \mid \eta \text{ is a twisted }\mathbb{F}_q\text{-partition}\}
\]
is a set of orbit representatives for the action of $UT_n(\mathbb{F}_{q^2})$ on $\frak{u}^*$.
\end{lem}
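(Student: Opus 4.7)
The plan is to combine an orbit count with a dualization of the reduction argument used in Lemma~\ref{lemscr}. First, Corollary~\ref{sclindexing} parametrizes the superclasses of $U$, equivalently the $G$-orbits on $\frak{u}$ via the Springer morphism $f$, by twisted $\mathbb{F}_q$-set partitions; applying Lemma~\ref{orbitcounting} to the contragredient action then shows that the number of $G$-orbits on $\frak{u}^*$ also equals the number of twisted $\mathbb{F}_q$-set partitions. Hence it suffices to show that the map $\eta \mapsto G\cdot \lambda_\eta$ from twisted partitions to $G$-orbits is injective, or equivalently (by this count) surjective.

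To establish surjectivity I would imitate the proof of Lemma~\ref{lemscr} on the dual side. Fix a basis of $\frak{u}$ over $\mathbb{F}_q$ consisting of the antisymmetrizations $e_{ij} - (e_{ij})^\dagger$ for pairs $(i,j)$ with $i < j$ and $i+j\neq n+1$, together with elements of the form $\alpha e_{i\bar i}$ where $\alpha \in \mathbb{F}_{q^2}$ satisfies $\alpha^q+\alpha=0$. Record each $\lambda \in \frak{u}^*$ as an array of coefficients relative to this basis, so that the contragredient action of $g = 1 + c\,e_{rs} \in G$ translates into $\dagger$-paired column and row operations. Imitating Lemma~\ref{lemscr}, I would at each stage locate a support position $(i_0, j_0)$ with an obstructing nonzero entry $\lambda_{k j_0}$ in column $j_0$ above it, and cancel that entry by applying a suitable element $1 - (\lambda_{k j_0}/\lambda_{i_0 j_0})\, e_{k i_0}$, or the analogous anti-diagonal variant in the case $k = \overline{j_0}$ where the scalar must be replaced by $\lambda_{k j_0}/(\lambda_{i_0 j_0}+\lambda_{i_0 j_0}^q)$. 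Iterating this procedure in a bottom-up, right-to-left fashion terminates in a functional whose support contains at most one entry per row and per column, which by definition is some $\lambda_\eta$.

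The main obstacle will be keeping the $\dagger$-symmetry consistent throughout the reduction, since each elementary cancellation simultaneously modifies two $\dagger$-paired entries of the coefficient array. Ordering the cancellations carefully (from the rightmost column inward, and from bottom to top within each column) ensures that already-cleared positions are not reintroduced. The anti-diagonal case $k=\overline{j_0}$ requires a separate analysis parallel to the $k=\bar j$ branch in the proof of Lemma~\ref{lemscr}, and this is the only place where the particular structure of twisted partitions enters nontrivially. Combining the orbit count with the resulting surjectivity proves the lemma.
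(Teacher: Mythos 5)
Your proposal is correct and matches the paper's intended argument, which is given only as ``The proof of this lemma is similar to that of Lemma~\ref{lemscr}''---namely a $\dagger$-compatible reduction algorithm, carried out on the coefficient array of $\lambda \in \frak{u}^*$ under the contragredient action of $UT_n(\mathbb{F}_{q^2})$. The preliminary orbit count you add (Corollary~\ref{sclindexing} combined with Lemma~\ref{orbitcounting}, reducing the problem to surjectivity alone) is a sensible clean-up that the paper leaves implicit; it sidesteps having to argue separately that distinct twisted partitions land in distinct orbits, which is slightly less direct on the dual side than in Lemma~\ref{lemscr} because $\frak{u}^*$ does not sit inside $\frak{ut}_n(\mathbb{F}_{q^2})^*$ the way $\frak{u}$ sits inside $\frak{ut}_n(\mathbb{F}_{q^2})$.
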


The proof of this lemma is similar to that of Lemma~\ref{lemscr}. For a twisted $\mathbb{F}_q$-set partition, we define $\chi^\eta = \chi_{\lambda_\eta}$.

\begin{cor} The superclasses and supercharacters are given by
\[
        \{K_{u_\eta} \mid  \eta \text{ is a twisted }\mathbb{F}_q\text{-partition}\}
\]
and
\[
        \{\chi^\eta \mid \eta \text{ is a twisted }\mathbb{F}_q\text{-partition}\}.
\]
\end{cor}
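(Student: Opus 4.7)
The plan is to assemble this corollary directly from the two preceding results, \emph{Corollary~\ref{sclindexing}} and \emph{Lemma~\ref{lemor}}, together with the recipe for superclasses and supercharacters given in Theorem~\ref{sctofu}. There is almost no new content here; the substantive work (the row-reduction argument) has already been done in proving Lemma~\ref{lemscr} and its dual analogue.

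For the superclass side, I would first recall that by definition $K_{u_\eta} = \{v \in U \mid f(v) \in G \cdot f(u_\eta)\}$, so that the superclasses are in bijection with the $G$-orbits on $\mathfrak{u}$ via $u \mapsto G \cdot f(u)$. Corollary~\ref{sclindexing} asserts that the elements $\{u_\eta\}$, as $\eta$ ranges over twisted $\mathbb{F}_q$-partitions, form a complete set of superclass representatives; combined with the injectivity observation that distinct $\eta$ yield distinct $x_\eta$ (each $x_\eta$ has at most one nonzero entry per row and column, and Lemma~\ref{lemscr} shows uniqueness of such an element in each orbit), this gives that the map $\eta \mapsto K_{u_\eta}$ is a bijection from twisted $\mathbb{F}_q$-partitions onto the set of superclasses of $U$.

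For the supercharacter side, Theorem~\ref{sctofu} tells us that the supercharacters are $\{\chi_\lambda \mid \lambda \in \mathfrak{u}^*\}$, with $\chi_\lambda = \chi_\mu$ exactly when $\lambda$ and $\mu$ lie in the same $G$-orbit in $\mathfrak{u}^*$ (this follows from the fact that $|\mathcal{X}| = |\mathcal{K}|$ equals the number of such orbits, together with the definition of $\chi_\lambda$ as a sum over a $G$-orbit). Lemma~\ref{lemor} provides that $\{\lambda_\eta\}$ is a transversal for the $G$-action on $\mathfrak{u}^*$, so $\eta \mapsto \chi^\eta = \chi_{\lambda_\eta}$ is a bijection from twisted $\mathbb{F}_q$-partitions onto the set of supercharacters.

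Putting these two bijections together yields the claim. The only step that could conceivably require additional care is verifying that the indexing is well-defined, i.e., that $\eta \mapsto u_\eta$ and $\eta \mapsto \lambda_\eta$ each take the definition of ``twisted $\mathbb{F}_q$-partition'' (with the symmetry $i \overset{a}{\frown} j \in \eta \implies \bar{j} \overset{-a^q}{\frown} \bar{i} \in \eta$, and the constraint $a^q + a = 0$ on diagonal arcs) into $\mathfrak{u}$ and $\mathfrak{u}^*$ respectively, but this is immediate from the description of $\mathfrak{u} = \mathfrak{uu}_n(\mathbb{F}_{q^2})$ via the condition $-x = x^\dagger = J\overline{x}^t J$, matching exactly the two combinatorial constraints on the labels. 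Since no genuine obstacle arises, the proof is short; I expect the whole statement to follow in one or two lines citing the two results above.
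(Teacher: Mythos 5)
Your proposal is correct and matches the route the paper intends: the corollary is stated without proof precisely because it follows immediately from Corollary~\ref{sclindexing} (superclass representatives), Lemma~\ref{lemor} (orbit representatives in $\frak{u}^*$), and the fact built into Theorem~\ref{sctofu} that $\chi_\lambda$ depends only on the $G$-orbit of $\lambda$ and that the number of supercharacters equals the number of such orbits. Your assembly of these pieces, including the well-definedness check that the twisted-partition symmetry matches the defining condition of $\frak{uu}_n(\mathbb{F}_{q^2})$, is exactly what the paper leaves implicit.
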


\subsection{Supercharacter values on superclasses}

The goal of this section is to calculate $\chi^\eta(u_\nu)$, where $\eta$ and $\nu$ are twisted $\mathbb{F}_q$-set partitions. We will call a supercharacter \emph{elementary} if it corresponds to a twisted $\mathbb{F}_q$-set partition of the form $\eta = \{i\overset{a}{\frown}j \cup \bar{j} \overset{-a^q}{\frown}\bar{i}\}$ with $i \neq \bar{j}$ or of the form $\eta = \{i\overset{a}{\frown}\bar{i}\}$ with $a^q+a=0$. In order to simplify calculations, we will show that every supercharacter can be written as a product of distinct elementary supercharacters. This is analogous to the method used in types $A$, $B$, $C$ and $D$ (see \cite{MR1896026,MR2537684}).

\bigbreak

Recall that, for $\lambda \in \frak{u}^*$, the supercharacter $\chi_\lambda$ is induced from a linear character of the subgroup $U_\lambda$ (see Section~\ref{mainresultproof} for specifics). The subgroup $U_\lambda$ is associated to a subalgebra $\frak{u}_\lambda$ of $\frak{u}$. We can describe this subalgebra in terms of the twisted $\mathbb{F}_q$-set partition associated to $\chi_\lambda$.

\begin{lem}\label{a2ulambda} Let $\eta$ be a twisted $\mathbb{F}_q$-set partition; then
\[
        \frak{u}_{\lambda_\eta} = \bigg\{x \in \frak{u} \mid x_{ij} = 0 \textup{ if } i\frown k \in \eta \textup{ with } j<k \textup{ and } j \leq\frac{n+1}{2}\bigg\}.
\]
\end{lem}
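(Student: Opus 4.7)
The plan is to unpack the defining condition of $\frak{u}_{\lambda_\eta}$ into explicit vanishing conditions on the entries of $x$. By the proof of Theorem~\ref{ucharacters}, $\frak{u}_{\lambda_\eta} = \frak{u}\cap\frak{l}_{\widetilde{\lambda_\eta}}$, where $\widetilde{\lambda_\eta}\in\frak{g}^{*}$ is the unique extension of $\lambda_\eta$ satisfying $\widetilde{\lambda_\eta}(z) = -\widetilde{\lambda_\eta}(z^{\dagger})$. For $x\in\frak{u}$ and $y\in\frak{g}$, the identity $(yx)^{\dagger} = -xy^{\dagger}$ gives $yx - (yx)^{\dagger} = y\ast x$, so $\widetilde{\lambda_\eta}(yx) = \tfrac{1}{2}\lambda_\eta(y\ast x)$. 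Thus $x\in\frak{u}_{\lambda_\eta}$ if and only if $\lambda_\eta(y\ast x)=0$ for every $y\in\frak{h}$, and since $\frak{h}$ is spanned as an $\mathbb{F}_q$-space by $\{\alpha e_{kl} : \alpha\in\mathbb{F}_{q^2},\ k<l,\ l>n/2\}$, it suffices to check these generators.

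A direct calculation yields
\[
(\alpha e_{kl}\ast x)_{ij} = \alpha\,\delta_{ik}\,x_{lj} + \alpha^{q}\,x_{i\bar{l}}\,\delta_{j\bar{k}}.
\]
Applying $\lambda_\eta$ and using that each vertex of $[n]$ is the source (respectively, target) of at most one arc of $\eta$, the two resulting sums pair up via the twisting involution $(i\overset{a}{\frown} j)\mapsto(\bar{j}\overset{-a^{q}}{\frown}\bar{i})$. If $k\overset{a}{\frown} j^{\ast}\in\eta$, the relation $x_{\bar{j^{\ast}}\bar{l}} = -x_{lj^{\ast}}^{q}$ coming from $x\in\frak{u}$ collapses the sum to
\[
\lambda_\eta(\alpha e_{kl}\ast x) = \mathrm{Tr}_{\mathbb{F}_{q^{2}}/\mathbb{F}_{q}}\bigl(a\alpha\, x_{lj^{\ast}}\bigr),
\]
while if $k$ is not the source of any arc of $\eta$, both contributions vanish identically. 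The middle-crossing case $k=\bar{j^{\ast}}$ produces the same formula thanks to the constraint $a^{q}+a=0$ that twisted set partitions impose on such arcs.

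By non-degeneracy of the trace, the vanishing of $\mathrm{Tr}(a\alpha\, x_{lj^{\ast}})$ for all $\alpha\in\mathbb{F}_{q^{2}}$ is equivalent to $x_{lj^{\ast}}=0$, so varying $l$ over $\max(k, n/2)<l<j^{\ast}$ produces all vanishing conditions defining $\frak{u}_{\lambda_\eta}$. To match the indexing of the lemma, I apply the $\dagger$-relation $x_{lj^{\ast}} = -x_{\bar{j^{\ast}}\bar{l}}^{q}$ and rename $i := \bar{j^{\ast}}$, $j := \bar{l}$; the constraints $l>n/2$, $l<j^{\ast}$, and $l>k$ transform into $j\leq(n+1)/2$, $j>i$, and $j<\bar{k}$ respectively. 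Since the twisted partner $i\overset{-a^{q}}{\frown}\bar{k}$ lies in $\eta$, this produces precisely the description in the lemma. The main obstacle will be the careful bookkeeping of the ranges after the $\bar{\cdot}$ substitution, in particular verifying that when $k\geq n/2$ the constraint $j<\bar{k}$ must be retained (rather than being absorbed into $j\leq(n+1)/2$), together with the uniform treatment of middle-crossing arcs described above.
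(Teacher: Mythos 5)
Your argument follows the same route as the paper's proof (reduce to $\frak{u}\cap\frak{l}_{\widetilde{\lambda_\eta}}$ via the anti-symmetric extension, then unpack the vanishing condition against the spanning set $\alpha e_{kl}$ of $\frak{h}$), but writes out the computation that the paper dismisses as ``apparent,'' correctly handling the passage from the $x_{lj^{\ast}}$-form of the constraint to the $x_{ij}$-form using $x\in\frak{u}$, the middle-arc case where $a^q+a=0$, and the range bookkeeping under $\bar{\cdot}$. This is a complete and correct proof, essentially the same approach with the details filled in.
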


\begin{proof} Recall that $\frak{u}_{\lambda_\eta} = \frak{l}_\mu \cap \frak{u}$, where $\mu \in \frak{g}^*$ is the functional defined by $\mu(x) = \frac{1}{2}\lambda_\eta(x-x^\dagger)$. From the definition of $\frak{l}_\mu$ in Section~\ref{mainresultproof}, it is apparent that
\[
        \frak{l}_\mu = \bigg\{x \in \frak{g} \mid x_{ij} = 0 \textup{ if } i\frown k \in \eta \textup{ with } j<k \textup{ and } j \leq\frac{n+1}{2}\bigg\}.
\]
\end{proof}
  For a twisted $\mathbb{F}_q$-set partition $\eta$, we can write $\eta$ as a disjoint union of twisted $\mathbb{F}_q$-set partitions of the form $\{i\overset{a}{\frown}j \cup \bar{j} \overset{-a^q}{\frown}\bar{i}\}$ with $i \neq \bar{j}$ or of the form $\{i\overset{a}{\frown}\bar{i}\}$ with $a^q+a=0$. In other words, there exists $m$ such that
\[
        \eta = \bigsqcup_{r = 1}^m\eta_r
\]
with each $\eta_r$ of the described form. For $1 \leq r \leq m$, define $\lambda_r = \lambda_{\eta_r}$.

\begin{lem}\label{a2ulambda2} With notation as above,
\begin{enumerate}[label=\emph{(\alph*)}]
\item $\frak{u}_{\lambda_\eta} = \bigcap_{r=1}^m \frak{u}_{\lambda_r}$, and for any $s\geq 1$, $\frak{u} = \frak{u}_{s+1} +\bigcap_{r \leq s} \frak{u_r}$, and
\item $U_{\lambda_\eta} = \bigcap_{r=1}^m U_{\lambda_r}$, and for any $s\geq 1$, $U = U_{s+1}(\bigcap_{r \leq s}U_r)$.
\end{enumerate}
\end{lem}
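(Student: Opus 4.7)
The plan is to reduce both parts to the explicit description of $\frak{u}_{\lambda_\eta}$ given in Lemma~\ref{a2ulambda}, and then to exploit the fact that distinct arcs in a set partition have distinct left endpoints.

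First I would tackle part (a). Lemma~\ref{a2ulambda} identifies $\frak{u}_{\lambda_\rho}$ as the subspace of $\frak{u}$ cut out by the vanishing conditions $x_{ij}=0$ for $(i,j)$ in the set $S_\rho = \{(i,j) \mid i \frown k \in \rho \text{ with } j<k \text{ and } j\leq (n+1)/2\}$. Because $\eta = \bigsqcup_r \eta_r$ is a disjoint union of arcs, $S_\eta = \bigsqcup_r S_{\eta_r}$, which yields the first equality $\frak{u}_{\lambda_\eta} = \bigcap_r \frak{u}_{\lambda_r}$ immediately. For the second equality I would observe that every position in $S_{\eta_r}$ has its row index equal to a left endpoint of an arc of $\eta_r$; since a set partition assigns each arc a unique left endpoint, the row indices appearing in $S_{\eta_r}$ and in $\bigcup_{s\neq r} S_{\eta_s}$ are disjoint. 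In particular $S_{\eta_{s+1}} \cap \bigcup_{r \leq s} S_{\eta_r} = \emptyset$, and the general fact that the sum of two coordinate subspaces defined by vanishing on sets $A$ and $B$ equals the coordinate subspace defined by vanishing on $A \cap B$ gives $\frak{u}_{s+1} + \bigcap_{r \leq s} \frak{u}_r = \frak{u}$.

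For part (b), the first equality transfers directly from (a) under the Springer-morphism bijection, since $U_\lambda = f^{-1}(\frak{u}_\lambda)$ and $f$ commutes with intersections. For the second equality I plan to count: writing $A = U_{s+1}$ and $B = \bigcap_{r\leq s} U_r$, both subgroups of the finite group $U$, I have $|AB| = |A|\cdot|B|/|A\cap B|$. By the first equality of (b), $A \cap B = \bigcap_{r \leq s+1} U_r = U_{\lambda_{\eta'}}$ where $\eta' = \bigsqcup_{r \leq s+1} \eta_r$. The dimension identity from part (a), together with $|U_\mu| = |\frak{u}_\mu|$ via the bijection $f$, yields $|A|\cdot|B| = |U|\cdot|A\cap B|$. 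Hence $|AB| = |U|$, which forces $AB = U$.

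The main obstacle, as I see it, will be the careful bookkeeping needed to verify that $S_{\eta_{s+1}}$ and $\bigcup_{r \leq s} S_{\eta_r}$ are actually disjoint. One must confirm that for a pair of arcs of the form $\{i \overset{a}{\frown} j, \bar{j} \overset{-a^q}{\frown} \bar{i}\}$ the two arcs contribute forbidden positions in distinct rows $i$ and $\bar{j}$, and that these rows never reappear as left endpoints of arcs from any other $\eta_s$; for the degenerate case $\{i \overset{a}{\frown} \bar{i}\}$ one must similarly check that the single row $i$ is not shared. Once this bookkeeping is cleanly in place, the remaining steps are routine linear algebra and finite-group counting.
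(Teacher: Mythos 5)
Your proof is correct and takes the same route as the paper's one-line sketch, which simply states that (a) follows from Lemma~\ref{a2ulambda} and that (b) follows from (a) together with the bijection $f$. You have rightly supplied the two details that sketch leaves implicit: the disjointness of the constraint sets $S_{\eta_r}$ (resting on the fact that each element of $[n]$ is the left endpoint of at most one arc of $\eta$), and the subgroup-counting argument needed to pass from the linear identity $\frak{u}=\frak{u}_{s+1}+\bigcap_{r\le s}\frak{u}_r$ to the group factorization $U=U_{s+1}\bigl(\bigcap_{r\le s}U_r\bigr)$, since the Springer morphism is not multiplicative.
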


\begin{proof} Part (a) follows directly from Lemma~\ref{a2ulambda}. Part(b) follows from (a) and the fact that $f(U_\mu) = \frak{u}_\mu$ for any $\mu \in\frak{u}^*$.
\end{proof}

For two characters $\chi$ and $\psi$, define their product by $(\chi\psi)(u) = \chi(u)\psi(u)$.

\begin{lem}\label{schprod} With notation as above,
\[
        \chi^\eta = \prod_{r=1}^m \chi^{\eta_r}.
\]
\end{lem}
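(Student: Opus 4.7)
My plan is to prove $\chi^\eta = \prod_{r=1}^m \chi^{\eta_r}$ by induction on $m$; the case $m=1$ is trivial, so assume the claim for $m-1$. Setting $\eta' = \eta_1 \sqcup \cdots \sqcup \eta_{m-1}$, the inductive hypothesis gives $\chi^{\eta'} = \prod_{r\leq m-1}\chi^{\eta_r}$, so it suffices to prove $\chi^\eta = \chi^{\eta'} \chi^{\eta_m}$.

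To establish this, write $V_1 = U_{\lambda_{\eta'}}$, $V_2 = U_{\lambda_{\eta_m}}$, and let $\psi_1 = (\theta \circ \lambda_{\eta'} \circ f)|_{V_1}$ and $\psi_2 = (\theta \circ \lambda_{\eta_m} \circ f)|_{V_2}$, so that by Theorem~\ref{ucharacters} we have $\chi^{\eta'} = \textup{Ind}_{V_1}^U(\psi_1)$ and $\chi^{\eta_m} = \textup{Ind}_{V_2}^U(\psi_2)$. By Lemma~\ref{a2ulambda2}(b) with $s = m-1$, $U = V_2 V_1$, so the double-coset space $V_1 \backslash U / V_2$ is a singleton, and the Mackey tensor-product formula collapses to
\[
    \textup{Ind}_{V_1}^U(\psi_1) \cdot \textup{Ind}_{V_2}^U(\psi_2) = \textup{Ind}_{V_1 \cap V_2}^U\bigl((\psi_1 \psi_2)|_{V_1 \cap V_2}\bigr).
\]
By Lemma~\ref{a2ulambda2}(a), $V_1 \cap V_2 = U_{\lambda_\eta}$; since the decomposition $\eta = \eta' \sqcup \eta_m$ yields $\lambda_\eta = \lambda_{\eta'} + \lambda_{\eta_m}$ in $\frak{u}^*$, the restriction of $\psi_1 \psi_2$ to $U_{\lambda_\eta}$ equals $(\theta \circ \lambda_\eta \circ f)|_{U_{\lambda_\eta}}$. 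Applying Theorem~\ref{ucharacters} once more identifies the right-hand side with $\chi^\eta$.

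The main point to verify carefully is the compatibility of the linear characters on the intersection $V_1 \cap V_2 = U_{\lambda_\eta}$: namely, that $\psi_1 \psi_2$ agrees on $U_{\lambda_\eta}$ with the character $(\theta \circ \lambda_\eta \circ f)|_{U_{\lambda_\eta}}$ inducing $\chi^\eta$. This is a direct consequence of the additive definition $\lambda_\eta(x) = \sum_{i \overset{a}{\frown} j \in \eta} a x_{ij}$ combined with the fact that $\theta$ is a homomorphism, which converts sums of functionals into products of complex numbers. With this compatibility and the structural facts supplied by Lemma~\ref{a2ulambda2} in hand, the remainder of the argument is a direct application of Mackey's formula; the only subtlety is keeping straight which intersection and which product side are used when invoking Lemma~\ref{a2ulambda2}(b).
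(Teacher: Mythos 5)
Your proof is correct and follows essentially the same route as the paper's: both hinge on Mackey's tensor-product formula collapsing to $\textup{Ind}_{H_1}^U(\psi_1)\textup{Ind}_{H_2}^U(\psi_2)=\textup{Ind}_{H_1\cap H_2}^U(\psi_1\psi_2)$ when $H_1H_2=U$, together with the intersection and product facts of Lemma~\ref{a2ulambda2}; the paper proves a general $m$-fold version of the collapsed formula by induction and then applies it, whereas you run the induction directly on $m$, which is the same argument organized slightly differently. One small slip: the identification $V_1\cap V_2=U_{\lambda_\eta}$ comes from part (b) of Lemma~\ref{a2ulambda2} (applied both to $\eta$ and to $\eta'$), not part (a).
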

\begin{proof} If $H_1$ and $H_2$ are subgroups of a finite group $G$ and $\psi_1$ and $\psi_2$ are characters of $H_1$ and $H_2$, respectively, then
\[
        \text{Ind}_{H_1}^G(\psi_1)\text{Ind}_{H_2}^G(\psi_2) = \sum_{x \in X} \text{Ind}_{{H_1}^x \cap H_2}^G (\psi_1^x\psi_2),
\]
where $X$ is a set of $(H_1,H_2)$ double coset representatives of $G$. In particular, if $HK = G$, then $\text{Ind}_{H_1}^G(\psi_1)\text{Ind}_{H_2}^G(\psi_2) = \text{Ind}_{{H_1}\cap H_2}^G (\psi_1\psi_2)$. By induction, if $H_1,...,H_k$ are subgroups of $G$ with $H_{s+1} (\bigcap_{r \leq s} H_r) = G$ for all $s \geq 1$, and $\psi_1,...,\psi_k$ are representations of the $H_r$, then
\[
         \prod_{r=1}^m\text{Ind}_{H_r}^G(\psi_r)= \text{Ind}_{(\bigcap_{r=1}^m H_r)}^G \prod_{r=1}^m \psi_r.
\]
The result follows from Lemma~\ref{a2ulambda2}.
\end{proof}

We now calculate the values of the supercharacters on the superclasses. First, we determine the dimensions of the elementary supercharacters.

\begin{lem} Let $\eta = \{i\overset{a}{\frown}j \cup \bar{j} \overset{-a^q}{\frown}\bar{i}\}$ (with $i \neq \bar{j}$) be a twisted $\mathbb{F}_q$-set partition; then
\begin{align*}
        \chi^\eta(1) = &|H \cdot \lambda_\eta| = \left\{\begin{array}{ll}
        q^{2(j-i-1)} & \quad \text{if $n$ is even,} \\
        q^{2(j-i-1)} & \quad \text{if $n$ is odd and $j \leq\frac{n+1}{2}$,} \\
        q^{2(j-i)} & \quad \text{if $n$ is odd and $j >\frac{n+1}{2}$.}
        \end{array}\right.
\end{align*}
Let $\eta = \{i\overset{a}{\frown}\bar{i}\}$ (with $i \leq \frac{n+1}{2}$) be a twisted $\mathbb{F}_q$-set partition; then
\begin{align*}
        \chi^\eta(1) = &|H \cdot \lambda_\eta| = \left\{\begin{array}{ll}
        q^{2(n-2i)} & \quad \text{if $n$ is even,} \\
        q^{2(n+1-2i)} & \quad \text{if $n$ is odd.}
        \end{array}\right.
\end{align*}
\end{lem}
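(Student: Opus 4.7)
The plan is to reduce the computation to a linear-algebra count based on Lemma~\ref{a2ulambda}. First observe that $\chi^\eta(1) = |H\cdot\lambda_\eta|$: in (\ref{eqchilambda}), $f(1)=0$ implies $(\theta\circ\mu\circ f)(1)=1$ for every $\mu$, so the sum telescopes to the scalar $|H\cdot\lambda_\eta|$ in front. Equivalently, by Theorem~\ref{ucharacters} the function $\chi^\eta$ is induced from a linear character of $U_{\lambda_\eta}$, so $\chi^\eta(1) = [U:U_{\lambda_\eta}]$; the identity $[U:U_{\lambda_\eta}] = |H\cdot\lambda_\eta|$ follows by combining Lemma~\ref{setequal} with Lemma~\ref{hlambda}.

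Next, I would write $[U:U_{\lambda_\eta}] = q^{\dim_{\mathbb{F}_q}\mathfrak{u}/\mathfrak{u}_{\lambda_\eta}}$ and use Lemma~\ref{a2ulambda} to describe $\mathfrak{u}_{\lambda_\eta}$ explicitly: for each arc $p\frown s$ in $\eta$, the coordinates $x_{pq}$ with $p<q<s$ and $q\le(n+1)/2$ are forced to vanish. The codimension is then the $\mathbb{F}_q$-dimension of the span of these killed coordinates, where each entry strictly above the antidiagonal contributes $2$ (being a free $\mathbb{F}_{q^2}$-parameter), each on the antidiagonal contributes $1$ (being constrained to the $\mathbb{F}_q$-line $\{\alpha:\alpha+\alpha^q=0\}$), and each strictly below contributes $0$ (being determined by its involution partner).

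It remains to enumerate these killed coordinates for the two elementary forms and read off the exponents. For $\eta=\{i\overset{a}{\frown} j,\ \bar j\overset{-a^q}{\frown}\bar i\}$ both arcs are present and act on disjoint rows ($i$ and $\bar j$); one subdivides by whether $j$ lies above or below $(n+1)/2$, and by the parity of $n$, and sums the contributions from each row. For $\eta=\{i\overset{a}{\frown}\bar i\}$ a single arc in row $i$ contributes; since $i\le(n+1)/2<\bar i$, the killed coordinates $x_{iq}$ with $i<q\le\lfloor (n+1)/2\rfloor$ all lie strictly above the antidiagonal, so the count reduces to enumerating integers $q$ in that window.

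The main obstacle is the bookkeeping under the antidiagonal involution: one must check that no killed coordinate coincides with the involution partner of another killed coordinate, which would otherwise be double-counted. In Case~1, this amounts to checking that the partner $(\bar q,\bar i)$ of any $(i,q)$ killed by the first arc is not simultaneously killed by the second arc, and vice versa; the range constraint $q<j$ forces $\bar q>\bar j$, so the partner is never hit, and the two contributions add cleanly. In Case~2 there is only one arc, so the issue does not arise. Once this bookkeeping is verified, the stated exponents follow from direct enumeration.
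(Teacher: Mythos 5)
Your approach matches the paper's: the proof given there is exactly the reduction $\chi^\eta(1)=|H\cdot\lambda_\eta|=|U:U_{\lambda_\eta}|$ followed by an appeal to Lemma~\ref{a2ulambda}, with the enumeration left to the reader. Two small remarks on your bookkeeping. First, the case split by position relative to the antidiagonal is vacuous here: every coordinate $x_{p,q}$ killed by Lemma~\ref{a2ulambda} satisfies $p<q\le\frac{n+1}{2}$, hence $p+q<n+1$, so it lies \emph{strictly above} the antidiagonal and always contributes exactly two to the $\mathbb{F}_q$-codimension (never one or zero). Moreover its $\dagger$-partner $(\bar q,\bar p)$ lies strictly below the antidiagonal and so can never itself be a killed coordinate; the double-counting worry you raise therefore vanishes for free, without the arc-by-arc check via $\bar q>\bar j$. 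Second, a genuine subtlety that neither you nor the paper makes explicit: the displayed exponents are correct only under the convention that $i\frown j$ is taken to be the arc with $i+j<n+1$ (equivalently $i=\min\{i,j,\bar i,\bar j\}$). With the opposite labeling the formula fails when $n$ is odd and the two arcs share the middle endpoint $\frac{n+1}{2}$ (e.g.\ $n=5$, arcs $1\frown 3$ and $3\frown 5$: taking $i=1,j=3$ gives the correct $q^2$, whereas $i=3,j=5$ gives $q^4$). If you carry out the enumeration you sketch, you should fix this convention first.
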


\begin{proof} This follows from the fact that $|H \cdot \lambda_\eta| = |U:U_{\lambda_\eta}|$ and Lemma~\ref{a2ulambda}.
\end{proof}

We mention that the dimension of an arbitrary supercharacter can be calculated by applying Lemma~\ref{schprod}. Next we calculate the value of a supercharacter on a superclass.

\begin{thm} Let $\eta$ and $\nu$ be twisted $\mathbb{F}_q$-set partitions. Then
\[
        \chi^{\eta}(u_\nu) = \left\{\begin{array}{ll}
        \frac{\chi^{\eta}(1)}{(-q)^{\text{nst}_\nu^\eta}}
        \theta\bigg(\sum_{\substack{i\overset{a}{\frown}j \in \eta \\ i\overset{b}{\frown}j \in \nu}} ab\bigg) & \quad  \begin{array}{l}\text{if for }i \frown j \in \eta \text{ and } i<k<j, \\ i \frown k, k \frown j \notin \nu \end{array} \\
        0 & \quad \text{else,}\end{array}\right.
\]
where $\text{nst}_\nu^\eta = |\{i<j<k<l \mid j\frown k \in \nu,i\frown l \in \eta\}|$.
\end{thm}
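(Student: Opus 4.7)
The plan is to reduce the general statement to the elementary case via Lemma~\ref{schprod}, then compute the elementary supercharacter values by directly expanding the defining sum for $\chi_\lambda$.

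\textbf{Reduction.} First I would write $\eta = \bigsqcup_{r=1}^m \eta_r$ as a disjoint union of elementary twisted $\mathbb{F}_q$-set partitions and apply Lemma~\ref{schprod} to get $\chi^\eta(u_\nu) = \prod_r \chi^{\eta_r}(u_\nu)$. The claimed formula is itself multiplicative across this decomposition: $\chi^\eta(1) = \prod_r \chi^{\eta_r}(1)$, the count $\text{nst}_\nu^\eta = \sum_r \text{nst}_\nu^{\eta_r}$ splits per arc of $\eta$, and the argument of $\theta$ splits the same way since it is indexed by arcs of $\eta$ that also occur in $\nu$. The vanishing condition likewise depends separately on each arc of $\eta$. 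So it suffices to prove the formula when $\eta$ is elementary.

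\textbf{Elementary computation.} For elementary $\eta$ with $\lambda = \lambda_\eta$, I would start from
\[
\chi^\eta(u_\nu) = \frac{|H \cdot \lambda|}{|G|}\sum_{g \in G}\theta\bigl(\lambda(g^{-1} \cdot x_\nu)\bigr),
\]
which is the identity derived in the proof of condition (2) of Theorem~\ref{sctofu}. Because $\lambda$ is supported on at most two matrix positions (the pair $\{(i,j),(\bar j,\bar i)\}$ in the off-antidiagonal case, or the single position $(i,\bar i)$ in the fixed case), the inner value $\lambda(g^{-1}x_\nu g^{-\dagger})$ depends only on the entries of $g$ in rows $i,\bar j$ and columns $j,\bar i$, and specifically on how they interact with the nonzero arcs of $\nu$. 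Writing $g^{-1}=1+\sum c_{kl}e_{kl}+\cdots$ and extracting the $(i,j)$ and $(\bar j,\bar i)$ entries of $g^{-1}x_\nu g^{-\dagger}$, the sum over $g$ factors as a product of independent inner sums indexed by positions $k$ with $i<k<j$. Each such $k$ contributes either a trivial factor (no interaction with $\nu$), a factor of $-q$ (a nesting $j'\frown k'\in \nu$ with $i<j'<k'<j$, i.e.\ a contribution to $\text{nst}_\nu^\eta$), or $0$ (precisely when $\nu$ contains an arc $i\frown k$ or $k\frown j$, which is the stated vanishing condition). The surviving nonzero sum includes a $\theta$-contribution from matching arcs $i\overset{b}{\frown}j\in\nu$, yielding the stated formula.

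\textbf{Main obstacle.} The hardest step will be showing the factor is $-q$ rather than $q$, as this is the source of the Ennola duality between $UU_n(\mathbb{F}_{q^2})$ and $UT_n(\mathbb{F}_q)$ mentioned in the introduction. The sign arises because pairing the row-$i$ and column-$j$ entries of $g$ with the row-$\bar j$ and column-$\bar i$ entries (which are tied together by $\dagger$) produces a trace-type sum over $\mathbb{F}_{q^2}$ whose $\mathbb{F}_q$-bilinear evaluation is $-q$ rather than $q^2$. Careful bookkeeping of which $g$-variables are free and which are determined by the $\dagger$-symmetry of the target entries is needed. The separate elementary case $\eta=\{i\overset{a}{\frown}\bar i\}$ is subtler because the two ``paired'' positions coincide; here the defining condition $a^q+a=0$ is essential, and I would exploit the trace form $z\mapsto z+z^q$ on $\mathbb{F}_{q^2}$ to collapse the Gauss-type inner sum to a pure power of $-q$ as well.
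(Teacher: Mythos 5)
Your overall strategy matches the paper's exactly: reduce to elementary supercharacters via Lemma~\ref{schprod}, then compute the elementary values directly by expanding the sum over $g \in G$ into a product of independent Gauss-type inner sums indexed by positions $k$ strictly between the endpoints of the arc. Your multiplicativity checks ($\chi^\eta(1)$, $\text{nst}_\nu^\eta$, the argument of $\theta$, and the vanishing condition all split over the elementary pieces $\eta_r$) are correct and are the right thing to verify before invoking Lemma~\ref{schprod}.

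However, your identification of \emph{where} the sign comes from is incorrect, and this is the genuine subtle point of the computation. You attribute the $-q$ to a ``trace-type sum'' and explicitly cite the trace form $z\mapsto z+z^q$. In fact the trace form is exactly the object that produces the \emph{vanishing} factors (and the harmless $q^2$ factors) in both elementary cases: sums of the shape $\sum_{c\in\mathbb{F}_{q^2}}\theta\bigl(\operatorname{Tr}(\alpha c)\bigr)$ give $0$ when $\alpha\neq 0$, and double sums $\sum_{c,d\in\mathbb{F}_{q^2}}\theta\bigl(\operatorname{Tr}(\alpha cd)\bigr)$ give $q^2$. Since in the off-antidiagonal elementary case $\eta=\{i\frown j,\bar j\frown\bar i\}$ every tuple counted by $\text{nst}_\nu^\eta$ comes in a mirrored pair (one for each arc of $\eta$), the exponent of $(-q)$ there is always even, so the sign never enters that case at all; each nesting pair just contributes $q^2=(-q)^2$. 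The sign $-q$ appears only in the fixed case $\eta=\{i\overset{a}{\frown}\bar i\}$, and only from self-paired arcs $k\frown\bar k\in\nu$ nested inside $i\frown\bar i$, each of which contributes exactly $1$ to $\text{nst}_\nu^\eta$. The relevant inner sum there is over a single variable and has the form $\sum_{c\in\mathbb{F}_{q^2}}\theta\bigl(ab\,c^{q+1}\bigr)$, i.e.\ a sum of $\theta$ composed with the \emph{norm} map $c\mapsto c^{q+1}$, not the trace. Since the norm $\mathbb{F}_{q^2}\to\mathbb{F}_q$ has fibers of size $q+1$ over nonzero elements and size $1$ over $0$, one gets $1+(q+1)(-1)=-q$, which after dividing by $q^2$ gives the factor $1/(-q)$ matching the single contribution to $\text{nst}_\nu^\eta$. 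If you try to push the trace form through here you get $0$, not $-q$, so as written your sketch would break down at precisely the step you flag as the main obstacle; replacing ``trace form'' by ``norm form'' and restricting the sign phenomenon to the self-paired arcs inside the fixed elementary case repairs it.
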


\begin{proof} By Lemma~\ref{schprod}, the proof reduces to proving that the theorem holds in the case that $\chi^\eta$ is an elementary supercharacter. The technique we use is similar to that employed by Diaconis--Thiem in the proof of Theorem 5.1 of \cite{MR2491890}. First let $\eta = \{i\overset{a}{\frown}j \cup \bar{j} \overset{-a^q}{\frown}\bar{i}\}$ (with $i \neq \bar{j}$). We have that
\begin{align*}
        \chi^\eta(u_\nu)
        & = \chi^{\eta}(1)\bigg(\prod_{\substack{i< k <l < j \\ k\overset{b}{\frown}l \in \nu}}
        \frac{1}{q^4}\sum_{c_k,d_l \in \mathbb{F}_{q^2}} \theta(abc_kd_l+(abc_kd_l)^q)\bigg) \\
        &\cdot\bigg(\prod_{\substack{i< k <j \\ k\overset{b}{\frown}j \in \nu}}
        \frac{1}{q^2}\sum_{c_k \in \mathbb{F}_{q^2}} \theta(abc_k+(abc_k)^q)\bigg) \\
        & \cdot\bigg(\prod_{\substack{i< l <j \\ i\overset{b}{\frown}l \in \nu}}
        \frac{1}{q^2}\sum_{d_l \in \mathbb{F}_{q^2}} \theta(abd_l+(abd_l)^q)\bigg)\cdot \prod_{i\overset{b}{\frown} j \in \nu}\theta(ab+(ab)^q) \\
        & = \chi^{\eta}(1)\bigg(\prod_{\substack{i< k <l < j \\ k\overset{b}{\frown}l \in \nu}}
        \frac{1}{q^2}\bigg)
        \cdot\bigg(\prod_{\substack{i< k <j \\ k\overset{b}{\frown}j \in \nu}}
        0\bigg)
         \cdot\bigg(\prod_{\substack{i< l <j \\ i\overset{b}{\frown}l \in \nu}}
        0\bigg)\cdot \prod_{i\overset{b}{\frown} j \in \nu}\theta(ab+(ab)^q).
\end{align*}
It follows that
\[
        \chi^\eta(u_\nu) = \left\{\begin{array}{ll}
         \frac{\chi^\eta(1)}{(q^2)^{\#\{k \frown l \in \nu \mid i<k<l<j\}}}\prod_{i\overset{b}{\frown} j \in \nu}\theta(ab+(ab)^q) & \quad \text{if for } i<k<j \text{, } i \frown k,k \frown j \notin \nu, \\
         0 & \quad \text{else.}\end{array}\right.
\]
We can rewrite this as
\[
        \chi^\eta(u_\nu) = \left\{\begin{array}{ll}
        \frac{\chi^{\eta}(1)}{(-q)^{\text{nst}_\nu^\eta}}
        \theta\bigg(\sum_{\substack{r\overset{a}{\frown}s \in \eta \\ r\overset{b}{\frown}s \in \nu}} ab\bigg) & \quad \begin{array}{l}\text{if for }r \frown s \in \eta \text{ and } r<k<s, \\ r \frown k, k \frown s \notin \nu, \end{array} \\
        0 & \quad \text{else.}\end{array}\right.
\]

Now let $\eta = \{i\overset{a}{\frown}\bar{i}\}$ (with $i \leq \frac{n+1}{2}$). Then
\begin{align*}
        \chi^\eta(u_\nu)
        &= \chi^{\eta}(1)\bigg(\prod_{\substack{i< k <\frac{n}{2} \\k< l<\bar{k}\\ k\overset{b}{\frown}l \in \nu}}
        \frac{1}{q^2}\sum_{c_k,c_{\bar{l}} \in \mathbb{F}_{q^2}}\theta(abc_kc_{\bar{l}}^q+(abc_kc_{\bar{l}}^q)^q)\bigg) \\
        & \cdot\bigg(\prod_{\substack{i< l<\bar{i} \\ i\overset{b}{\frown}l \in \nu}}
        \frac{1}{q^2}\sum_{c_{\bar{l}} \in \mathbb{F}_{q^2}}\theta(abc_{\bar{l}}+(abc_{\bar{l}})^q)\bigg)\\
        &\cdot\bigg( \prod_{\substack{i < k \leq \frac{n}{2}\\k\overset{b}{\frown}\bar{k} \in \nu}}\frac{1}{q^2}\sum_{c_k \in \mathbb{F}_{q^2}}\theta(ab(c_k)^{q+1})\bigg)
        \cdot\prod_{i\overset{b}{\frown}\bar{i} \in \nu} \theta(ab) \\
        &= \chi^{\eta}(1)\bigg(\prod_{\substack{i< k <\frac{n}{2} \\k< l<\bar{k}\\ k\overset{b}{\frown}l \in \nu}}
        \frac{1}{q^2}\bigg)\bigg(\prod_{\substack{i< l<\bar{i} \\ i\overset{b}{\frown}l \in \nu}}
        0\bigg)
        \cdot \bigg(\prod_{\substack{i < k \leq \frac{n}{2}\\k\overset{b}{\frown}\bar{k} \in \nu}}\frac{1}{-q}\bigg)\prod_{i\overset{b}{\frown}\bar{i} \in \nu} \theta(ab).
\end{align*}
It follows that
\[
        \chi^\eta(u_\nu) = \left\{\begin{array}{ll}
         \frac{\chi^\eta(1)}{(-q)^{\#\{k \frown l \in \nu \mid i<k<l<\bar{i}\}}}\prod_{i\overset{b}{\frown} j \in \nu}\theta(ab+(ab)^q) & \quad \text{if for } i<k<j \text{, }i \frown k,k \frown j \notin \nu, \\
         0 & \quad \text{else.}\end{array}\right.
\]
This can be rewritten as
\[
         \chi^\eta(u_\nu) = \left\{\begin{array}{ll}
        \frac{\chi^{\eta}(1)}{(-q)^{\text{nst}_\nu^\eta}}
        \theta\bigg(\sum_{\substack{r\overset{a}{\frown}s \in \eta \\ r\overset{b}{\frown}s \in \nu}} ab\bigg) & \quad \begin{array}{l}\text{if for }r \frown s \in \eta \text{ and } r<k<s, \\ r \frown k, k \frown s \notin \nu, \end{array} \\
        0 & \quad \text{else.}\end{array}\right.
\]
\end{proof}

Note that in the above formula the sum of the terms $ab$ is an element of $\mathbb{F}_q$, even though each individual term might not be. This formula is identical to the type $A$ supercharacter formula (which was first derived by Andr\'e, and can be found in \cite{MR2592079}), except with $-q$ replacing $q$ everywhere (note that the supercharacter degrees are all powers of $q^2$). This idea that information about the unitary group can be obtained from information about the general linear group by replacing $q$ with $-q$ is referred to as \emph{Ennola duality} (see \cite{MR803335}).

\section{Acknowledgements}
I would like to thank Nat Thiem for his numerous insights and suggestions which contributed to the quality of this paper. Many thanks as well to the referee for the helpful comments.

\bibliography{bibfile}
	\bibliographystyle{plain}

\end{document}